\definecolor{teal}{RGB}{22, 180, 180}
\newtheorem{thm}{Theorem}[section]
\newtheorem{lemma}[thm]{Lemma}
\newtheorem{cor}[thm]{Corollary}
\newtheorem{proposition}[thm]{Proposition}
\newtheorem{prop}[thm]{Proposition}
\newtheorem{question}[thm]{Question}
\theoremstyle{definition}  
\numberwithin{equation}{section}
\newtheorem{defn}[thm]{Definition}
\newtheorem{definition}[thm]{Definition}
\newtheorem{rem}[thm]{Remark}
\newtheorem{ex}[thm]{Example}
\newtheorem{exs}[thm]{Examples}
\newtheorem{remark}[thm]{Remark}
\theoremstyle{definition}
\theoremstyle{remark}
\newcounter{enumitemp}
\DeclareMathOperator{\Inert}{Inert}
\DeclareMathOperator{\lo}{o}
\DeclareMathOperator{\rr}{range}
\DeclareMathOperator{\Span}{span}
\newcommand{\am}{{\alpha_m}}
\newcommand{\A}{{\mathcal A}}
\newcommand{\C}{{\mathcal C}}
\newcommand{\E}{{\mathcal E}}
\newcommand{\F}{{\mathcal F}}
\newcommand{\cI}{{\mathcal I}}
\newcommand{\U}{{\mathcal U}}
\newcommand{\R}{\mathbb R}
\newcommand{\cN}{{\mathcal N}}
\newcommand{\N}{{\mathbb N}}
\newcommand{\ra}{{\rangle}}
\newcommand{\la}{{\langle}}
\newcommand{\Z}{\mathbb Z}
\def\la{{\langle}}
\def\ra{{\rangle}}
\def\la{{\langle}}
\def\z2s{{$\Z^2$-subshift}}
\def\zds{{$\Z^d$-subshift}}
\def\CK{{\mathcal K}}
\def\CL{{\mathcal L}}
\def\CS{{\mathcal S}}
\def\CT{{\mathcal T}}
\def\Z{{\mathbb Z}}
\newcommand{\trycomment}[1]{}
\DeclareMathOperator{\Aut}{{\rm Aut}}
\DeclareMathOperator{\End}{{\rm End}}
\DeclareMathOperator{\id}{{\rm Id}}
\DeclareMathOperator{\topo}{{\rm top}}
\title{The spacetime of a shift endomorphism}
\author{Van Cyr}
\address{Bucknell University, Lewisburg, PA 17837 USA}
\email{van.cyr@bucknell.edu}
\author{John Franks}
\address{Northwestern University, Evanston, IL 60208 USA}
\email{j-franks@northwestern.edu}
\author{Bryna Kra}
\address{Northwestern University, Evanston, IL 60208 USA}
\email{kra@math.northwestern.edu}
\subjclass[2010]{}
\keywords{subshift, automorphism, nonexpansive}
\thanks{The third author was partially supported by NSF grant 1500670.}
\begin{document}
\maketitle
 \begin{abstract}
The automorphism group of a one dimensional shift space over a finite
alphabet exhibits different types of behavior: for a large class with positive entropy, it contains a rich collection of
subgroups, while for many shifts of zero entropy, there are strong constraints on 
the automorphism group.  We view this from a different perspective, considering 
a single automorphism (and sometimes endomorphism) and studying the naturally associated
two dimensional shift system.  In particular, we describe the relation 
between nonexpansive subspaces in this two dimensional system and 
dynamical properties of an automorphism of the shift. 
 \end{abstract}

\section{Introduction}

Suppose $\Sigma$ is a finite alphabet and $X\subset\Sigma^{\Z}$ is a closed set that is invariant under the left shift $\sigma\colon\Sigma^{\Z}\to\Sigma^{\Z}$.  The collection of automorphisms $\Aut(X, \sigma)$, 
consisting of all homeomorphisms $\phi\colon X\to X$ that commute with $\sigma$, forms  a group (under composition).  
A useful approach to understanding a countable group $G$ is knowing if
it has subgroups which are isomorphic to (or are homomorphic images
of) simpler groups which are relatively well understood, such as
matrix groups, and in particular, lattices in classical Lie groups.
While the automorphism group of a shift is necessarily countable (as
an immediate corollary of the Curtis-Hedlund-Lyndon Theorem~\cite{H},
any automorphism $\phi\colon X\to X$ is given by a block code), there are numerous results in
the literature showing that the automorphism group of the full shift,
and more generally any mixing shift of finite type, contains
isomorphic copies of many groups: this collection includes, for
example, any finite group, the direct sum of countably many copies of
$\Z$, the free group on any finite number of generators, and the
fundamental group of any $2$-manifold (see~\cite{H, BLR, KR1}).  In
light of these results, it is natural to ask if there is any finitely
generated (or even countable) group which fails to embed in any such
automorphism group, meaning any group of the form $\Aut(X, \sigma)$.
A partial answer is given in~\cite{BK}, where it is shown that if
$(X,\sigma)$ is a subshift of finite type then any group that embeds
in the automorphism group must be residually finite.  At the other end
of the complexity spectrum for $(X,\sigma)$, there has been recent
work showing that $\Aut(X,\sigma)$ is significantly more tame for a
shift with very low complexity (see for example~\cite{CK1, CK2,
  DDMP}).

Instead of viewing the entire group, we focus on the structure inherent in
a single automorphism $\phi\in \Aut(X, \sigma)$, as studied for example in~\cite{H, BK, KR1, KR}. 
 Given an
automorphism $\phi$, there is an obvious way to associate a
$\Z^2$-shift action, which we call the spacetime of $\phi$ (in a slightly different setting, 
this is called the complete history by Milnor~\cite{milnor} and is referred to as the spacetime diagram in the cellular automata literature).   
We make use of a particular subset
of the spacetime, dubbed the light cone, that is closely related
to the notion of causal cone discussed in~\cite{milnor}.  
We show that the light cone gives a characterization of a well studied
structural feature of a $\Z^2$-shift, namely the boundary of a
component of expansive subspaces (see~\cite{BL} and~\cite{hochman}).
In particular, in \S\ref{sec:lightcone} we show that the edges of a
light cone for $\phi$ are always nonexpansive subspaces in its
spacetime (the precise statement is given in Theorem~\ref{thm nonexpansive}).

We also provide a complement to this result:
for many \z2ss with nonexpansive subspace $L$, the system is isomorphic
to the space time of an endomorphism $\phi$ by an isomorphism which carries $L$ to an
edge of the light cone of $\phi$.  

We then use these structural results to describe obstructions to embedding in the automorphism group of a shift.  
An important concept in the study of lattices is the idea of a distortion element,
meaning an element whose powers have sublinear growth of their minimal word 
length in some (and hence any) set of generators.
In \S\ref{sec:asymptotic}, 
we introduce a notion of range distortion for automorphisms, meaning 
that the range (see Section~\ref{subsec:defs} for the definitions) of the associated  block codes of iterates of the automorphism 
grow sublinearly.  An immediate observation is  that 
if an automorphism is distorted in $\Aut(X)$ (in the group sense), then it
is also range distorted.  We also introduce  a measure of non-distortion
called the asymptotic spread $A(\phi)$ of an automorphism $\phi$ and 
show that the topological entropies of $\phi$ and $\sigma$ satisfy the inequality
\[
h_{\topo}(\phi) \le A(\phi) h_{\topo}(\sigma).
\]
This recovers an inequality of Tisseur~\cite{tisseur}; his context is more restrictive, covering 
the full shift endowed with the uniform measure.  We do not appeal to measure theoretic entropy and our statement applies to a wider class of shifts.  

This inequality proves to be useful in providing obstructions to various groups embedding in the automorphism group.
These ideas are further explored in~\cite{forthcoming}. 

\subsection*{Acknowledgement} We  thank Alejandro Maass for helpful comments and for pointing us to references~\cite{sherevesky, tisseur}, and we thank Samuel Petite for helpful conversations.  
We also thank the referee for numerous comments that improved our article.

\section{Background}

\subsection{Shift systems and endomorphisms}
\label{subsec:defs}
We assume throughout that $\Sigma$ is a finite set (which we call the {\em alphabet}) endowed 
with the discrete topology
and endow $\Sigma^\Z$ with the product
topology. 
For $x \in \Sigma^\Z$, we write $x[n] \in \Sigma$ for the value of $x$ at $n \in \Z$.

The {\em left shift} $\sigma\colon \Sigma^\Z \to \Sigma^\Z$ is defined by 
$(\sigma x)[n] = x[n+1]$, and is a homeomorphism from $\Sigma^\Z$ to itself.
We say that $(X, \sigma)$ is a {\em subshift}, or just a {\em shift} when the context is clear,  if 
$X\subset \Sigma^\Z$ is a closed set that is invariant under the
left shift $\sigma\colon\Sigma^\Z\to\Sigma^\Z$.  

{\bf Standing assumption}: 
Throughout this article, $(X,\sigma)$ denotes a shift system and we assume 
that the alphabet $\Sigma$ of $X$ is finite and that the shift $(X, \sigma)$ is infinite, meaning that $|X| = \infty$. 

\begin{defn}\label{def: endo}
An {\em endomorphism} of the shift
$(X, \sigma)$ is a continuous surjection $\phi\colon X\to X$ such that
$\phi\circ\sigma = \sigma\circ\phi$.  An {\em endomorphism} which is
invertible is called an {\em automorphism}.
The group of all automorphisms of $(X, \sigma)$ is 
denoted $\Aut(X, \sigma)$, or simply $\Aut(X)$ when $\sigma$ 
is clear from the context. The semigroup of all endomorphisms of $X$
with operation composition
is denoted $\End(X, \sigma)$, or simply $\End(X)$.  We also observe
that $\End(X,\sigma) / \la \sigma \ra$, the set of cosets of the subgroup
$\la \sigma \ra$, is naturally a semigroup with multiplication
$\phi\la \sigma \ra \psi\la \sigma \ra $ defined to be $\phi\psi \la \sigma \ra$.
\end{defn}

For an interval $[n, n+1, \dots, n+k-1] \in \Z$ and $x \in X$, we let
$x[n, \dots, n+k-1]$ denote the element $a$ of $\Sigma^{k}$ with
$a_j = x[n+j]$ for $j = 0, 1, \ldots, k-1$.
Define the {\em words $\CL_k(X)$ of length $k$  in $X$} to 
be the collection of all $[a_1, \dots, a_k] \in \Sigma^{k}$ such that
there exist $x \in X$ and $m\in \Z$ with $x[m+i] = a_i$ for $1 \le i \le k$.
The length of a word $w \in \CL(X)$ is denoted by $|w|$.
The language $\CL(X)=\bigcup_{k=1}^\infty\CL_k(X)$ is defined 
to be the collection of all finite words.

The {\em complexity} of $(X, \sigma)$ is the 
function $P_X\colon \N\to\N$ that counts the number of 
words of length $n$ in the language of $X$.  Thus 
$$P_X(n) = \big \vert\CL_n(X) \big \vert.$$ 
The exponential growth rate of the complexity is
the {\em topological entropy $h_{\topo}$} of the shift $\sigma$. Thus
\[
h_{\topo}(\sigma) = \lim_{n \to \infty} \frac{\log(P_X(n))}{n}.
\]
This is equivalent to the usual definition of topological entropy using $(n,\varepsilon)$-separated sets (see, for example~\cite{LM}).

A map $\phi\colon X\to X$ is a {\em sliding block code} if there exists $R\in\N$ such that 
for any $x,y\in X$ with $x[i] = y[i]$ for $-R \le i \le R$, we have that $\phi(x)[0] = \phi(y)[0]$.
The least $R$ such that this holds is called the {\em range} of $\phi$. 

By the Curtis-Hedlund-Lyndon Theorem~\cite{H}, any endomorphism $\phi\colon X\to X$ 
of a shift $(X,\sigma)$ is a sliding block code.
In particular, $\End(X)$ is always countable. 

\begin{defn} \label{def: conjugacy}
Suppose $(X, \sigma)$ and $(X', \sigma')$  are shifts and
$\phi \in \End(X, \sigma)$ and $\phi' \in \End(X' ,\sigma')$ 
are endomorphisms.  We say that $\phi$ and $\phi'$ are {\em conjugate endomorphisms}
if there is a homeomorphism $h\colon X \to X'$ such that
\[
h \circ \sigma = \sigma' \circ h \text{ and }
h \circ \phi = \phi' \circ h.
\]
\end{defn}

A homeomorphism $h$ satisfying these properties is a sliding block code.
If $\phi$ and $\phi'$ both lie in $\Aut(X, \sigma)$, then $\phi$ and $\phi'$ are
conjugate if and only if they are conjugate as elements of the group $\Aut(X, \sigma)$.

A shift $X$ is \emph{irreducible} 
if for all words $u, v\in \CL(X)$, there exists $w\in \CL(X)$ such that 
$uwv\in\CL(X)$.  

\begin{defn}
A shift $(X,\sigma)$ is a {\em subshift of finite
type} provided it is defined by a finite set of excluded words.
In other words, there is a finite set  $\F \subset \CL(\Sigma^\Z)$ 
such that  $x \in X$ if and only if there are no $n \in \Z$ and $k>0$ such that 
$x[n, \dots, n+k] \in \F$.
\end{defn}

We make use of the following proposition due to Bowen \cite{B}. 
A proof can be found in~\cite[Theorem 2.1.8]{LM}.

\begin{prop}\label{bowen prop}
A shift $(X, \sigma)$ is a shift of finite type if and only if there exists
$n_0 \geq 0$ such that whenever $uw, wv \in \CL(X)$ and $|w| \ge n_0$, then also
$uwv \in \CL(X)$.
\end{prop}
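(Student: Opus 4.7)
The plan is to prove both implications directly; the argument is essentially careful bookkeeping around overlaps, and neither direction presents a conceptual obstacle.

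For the forward direction, suppose $X$ is the shift of finite type defined by a finite forbidden set $\F$, let $M = \max_{f \in \F} |f|$, and take $n_0 = M$. Given $uw, wv \in \CL(X)$ with $|w| \ge n_0$, I would choose points $x, y \in X$ realizing these words at known positions and translate $y$ so that its occurrence of $w$ is superimposed on the occurrence of $w$ in $x$. Define $z \in \Sigma^\Z$ by letting $z$ agree with $x$ up to and including the last index of this shared $w$, and with the translated $y$ from the first index of $w$ onward; the two definitions coincide on the overlap. Any forbidden subword of $z$ would have length at most $M \le |w|$ and therefore could not span the $w$-junction, so it would lie entirely in the $x$-portion or entirely in the $y$-portion, contradicting $x, y \in X$. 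Hence $z \in X$, and since $uwv$ appears in $z$ by construction, we conclude $uwv \in \CL(X)$.

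For the reverse direction, I would assume the combinatorial condition with constant $n_0$ and set $\F := \Sigma^{n_0+1} \setminus \CL_{n_0+1}(X)$, a finite collection of forbidden words. Let $X'$ be the shift of finite type it defines; the inclusion $X \subseteq X'$ is immediate. The main step is to prove, by induction on $k \ge n_0 + 1$, that every length-$k$ subword of every $y \in X'$ lies in $\CL(X)$. The base case $k = n_0 + 1$ holds by the definition of $X'$. For the inductive step, write a length-$(k+1)$ subword as $au'b$ with $a, b \in \Sigma$ and $|u'| = k - 1 \ge n_0$; then $au'$ and $u'b$ have length $k$ and lie in $\CL(X)$ by induction, so the combinatorial condition gives $au'b \in \CL(X)$.

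Since every finite subword of $y$ is contained in some subword of length at least $n_0 + 1$ and $\CL(X)$ is closed under passage to subwords, every finite subword of $y$ lies in $\CL(X)$. Approximating $y$ in the product topology by appropriate shifts of points in $X$ that realize its successively longer central subwords, and using that $X$ is closed, gives $y \in X$, and thus $X' \subseteq X$. The only places requiring any care are the index tracking in the gluing argument for the forward direction and this closing approximation, but both are routine.
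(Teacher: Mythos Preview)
Your proof is correct. The paper does not give its own proof of this proposition; it attributes the result to Bowen and refers the reader to \cite[Theorem 2.1.8]{LM} for a proof. Your argument is essentially the standard one found there: the forward direction glues two points along a shared block $w$ of length at least the maximum forbidden-word length, and the reverse direction shows that the $(n_0+1)$-block presentation already defines $X$ via an induction on word length together with compactness. There is nothing to compare against in the paper itself, and no gap in what you wrote.
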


\subsection{Higher dimensions}
More generally, one can
consider a multidimensional shift $X\subset \Sigma^{\Z^d}$ for some
$d\geq 1$, where $X$ is a closed set (with respect to the product topology) that is invariant under the
$\Z^d$ action $(T^ux)(v) = x(u+v)$ for $u\in\Z^d$.  We refer 
to $X$ with the $\Z^d$ action as a {\em \zds} and  to $\eta \in X$ as an {\em $X$-coloring
of $\Z^d$}.

We note that we have made a slight abuse of notation in passing to the multidimensional setting by denoting 
the entries of an element $x\in X$ by $x(u)$ (where $u\in\Z^d$), rather than $x[u]$ as we did for a one dimensional shift.  This is done 
to avoid confusion with interval notation, as we frequently restrict ourselves to the two dimensional case, writing 
$x(i,j)$ rather than the possibly confusing $x[i,j]$. 

\begin{definition}\label{def: complexity}
Suppose $X\subset\Sigma^{\Z^d}$ is a \zds, endowed with the natural $\Z^d$-action by translations. 
If $\CS\subset\Z^d$ is finite and $\alpha\colon\CS\to\Sigma$, define the {\em cylinder set} 
$$ 
[\CS,\alpha]:=\{\eta\in X\colon\text{the restriction of $\eta$ to $\CS$ is $\alpha$}\}. 
$$ 
The set of all cylinder sets forms a basis for the topology of $X$.  The {\em complexity function} for $X$ is the map $P_X\colon\{\text{finite subsets of $\Z^d$}\}\to\N$ given by 
$$ 
P_X(\CS):= \big \vert\{\alpha\in\Sigma^{\CS}\colon[\CS,\alpha]\neq\emptyset\} \big \vert
$$ 
which counts the number of colorings of $\CS$ which are
restrictions of elements of $X$.  If $\alpha\colon\CS\to\Sigma$,
is the restriction of an element of $X$ we say it 
{\em extends uniquely} to an $X$-coloring if there is exactly one legal $\eta \in X$ 
whose restriction to $\CS$ is $\alpha$.  Similarly, if $\CS\subset\CT\subset\Z^d$ and if $\alpha\colon\CS\to\Sigma$ is such that $[\alpha,\CS]\neq\emptyset$, then we say $\alpha$ {\em extends uniquely} to an $X$-coloring of $\CT$ if there is a unique $\beta\colon\CT\to\Sigma$ such that $[\beta,\CT]\neq\emptyset$ and the restriction of $\beta$ to $\CS$ is $\alpha$. 
\end{definition} 

Note that as in the one dimensional setting, the complexity function is translation invariant, 
meaning that for any $v \in \Z^d$, we have 
\[
P_X(\CS) = P_X(\CS + v).
\]

\subsection{Expansive subspaces}

An important concept in the study of higher dimensional systems
is the notion of an expansive subspace (see Boyle and Lind~\cite{BL} in
particular). For our purposes it suffices to restrict to the case $d = 2$.

\begin{defn}\label{def: expansive}
Suppose $X\subset \Sigma^{\Z^2}$ is a \z2s
and $L$ is a  one-dimensional subspace of  $\R^2$. We consider $\Z^2 \subset \R^2$
in the standard way.
For $r>0$, define 
$$L(r) = \{ z \in \Z^2 \colon d(z, L) \le r\}.$$
We say that the the line $L$ is {\em expansive} if
there exists $r>0$  such that for any $\eta \in X$, 
the restriction  $\eta|_{L(r)}$   extends uniquely to an $X$-coloring of $\Z^2$.  
We call the one-dimensional subspace $L$ {\em nonexpansive} 
if it fails to be expansive.
\end{defn}

It is also important for us to consider one-sided expansiveness for
a subspace $L$.  To 
define this we need to specify a
particular side of a one-dimensional subspace. For this 
we require an orientation of 
$\R^2$ (or $\Z^2)$ and an orientation of the subspace.  We use the standard
orientation of $\R^2$ given by the two form $\omega = dx \wedge dy$ 
or equivalently the orientation for which the standard ordered basis 
$\{(1,0),(0,1)\}$ is positively oriented.

If $L$ is an {\em oriented} one-dimensional subspace of $\R^2$ 
then the orientation determines a choice of one component
$L^+$ of  $L \setminus \{0\}$ which we call the {\em positive}
subset of $L$.
We then denote by $H^+(L)$ the open half space in 
$\R^2 \setminus L$ with the property that 
$\omega(v, w) > 0$ for all $v \in L^+$ and  $w \in H^+(L)$.  Alternatively,
$H^+(L)$ is the set of all $w \in \R^2$ such that 
$\{v,w\}$ is a positively oriented basis of $\R^2$ whenever
$v \in L^+$ and  $w \in H^+(L)$.  Equivalently
\[
H^+(L) = 
\{w \in \R^2 \colon i_v\omega(w) > 0\}
\]
whenever $v \in L^+$ and $i_v$ is the interior product.
The half space $H^-(L)$ is defined analogously or by $H^-(L) = -H^+(L)$.

\begin{defn}\label{def: sided-expansive}
Suppose  $L$ is an oriented one-dimensional subspace of $\R^2$, i.e.
it has a distinguished  choice of one component
$L^+$ of  $L \setminus \{0\}$ .
Then $L$ is {\em positively expansive} if there exists $r >0$ such that for
every $\eta \in X$, the restriction  $\eta|_{L(r)}$ extends uniquely
to the half space  $H^+(L)$.
Similarly $L$ is {\em negatively expansive} 
if the restriction  $\eta|_{L(r)}$ extends uniquely
to the half space  $H^-(L)$.
\end{defn}

\begin{prop}\label{prop: +expansive}
The oriented subspace $L$ is positively expansive if 
for every $\eta \in X$, the restriction  $\eta|_{H^-(L)}$ extends uniquely
to an $X$-coloring of $\Z^2$.  Equivalently 
$L$ fails to be positively expansive if and only
if there are colorings $\eta, \nu \in X$ such that $\eta \ne \nu$,
but $\eta(i,j) = \nu(i,j)$ for all $(i,j) \in H^-(L)$.
\end{prop}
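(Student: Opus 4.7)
The plan is to prove the biconditional implicit in the statement: $L$ is positively expansive if and only if, for every $\eta \in X$, the restriction $\eta|_{H^-(L)}$ extends uniquely to an $X$-coloring of $\Z^2$; the second formulation of the proposition is then the contrapositive. For the forward direction, suppose $L$ is positively expansive with constant $r$, and let $\eta, \nu \in X$ agree on $H^-(L)$. Let $t \colon \R^2 \to \R$ denote the signed distance to $L$, positive on $H^+(L)$, and choose any $v_0 \in \Z^2 \cap H^-(L)$, so $t(v_0) < 0$. For the translated lines $L_k = L + k v_0$, the integer strip $L_k(r)$ sits entirely in $H^-(L)$ once $k > r/|t(v_0)|$, so $\eta|_{L_k(r)} = \nu|_{L_k(r)}$. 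Translation invariance of $X$ lets me apply the defining property of positive expansiveness to the line $L_k$, and conclude that $\eta$ and $\nu$ agree on $H^+(L_k)$. As $k$ grows, the half spaces $H^+(L_k)$ exhaust $\Z^2$, so $\eta = \nu$.

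For the backward direction I argue the contrapositive: assuming $L$ is not positively expansive, I construct distinct $\tilde\eta, \tilde\nu \in X$ that agree on $H^-(L)$. For each $n \in \N$, pick $\eta_n, \nu_n \in X$ with $\eta_n|_{L(n)} = \nu_n|_{L(n)}$ but $\eta_n(q) \ne \nu_n(q)$ for some $q \in H^+(L)$, and set $t_n^* = \inf\{t(q) : q \in \Z^2 \cap H^+(L),\, \eta_n(q) \ne \nu_n(q)\}$, which satisfies $t_n^* > n$. Choose $p_n \in H^+(L)$ with $\eta_n(p_n) \ne \nu_n(p_n)$ and $t(p_n) < t_n^* + 1/n$; by definition of $t_n^*$, every $q \in H^+(L)$ with $t(q) < t_n^*$ is a point of agreement. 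Translating by $-p_n$ yields $\tilde\eta_n, \tilde\nu_n \in X$ disagreeing at the origin, and pulling back the agreement on $L(n)$ together with the agreement on the band $\{q \in H^+(L) : t(q) < t_n^*\}$ shows that $\tilde\eta_n = \tilde\nu_n$ on the set $\{w \in \Z^2 : t(w) \in [-n - t(p_n),\, t_n^* - t(p_n))\}$, whose upper endpoint lies in $(-1/n, 0]$. By compactness of $X$ and finiteness of $\Sigma$, pass to a subsequence along which $\tilde\eta_n \to \tilde\eta$, $\tilde\nu_n \to \tilde\nu$, and the disagreement at the origin stabilizes to fixed symbols. The limits satisfy $\tilde\eta(0) \ne \tilde\nu(0)$, and for any fixed $w \in H^-(L) \cap \Z^2$ the bounds $-n - t(p_n) \le t(w) < t_n^* - t(p_n)$ both hold once $n$ is sufficiently large, giving $\tilde\eta(w) = \tilde\nu(w)$ and contradicting the hypothesis.

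The main obstacle is the careful bookkeeping in the backward direction, specifically ensuring that the agreement region of the limit pair exhausts all of $H^-(L)$ and does not leave a gap of positive width just below $L$. This is precisely why $p_n$ must be chosen with $t(p_n)$ within $1/n$ of the infimum $t_n^*$: the discrepancy $t_n^* - t(p_n) \in (-1/n, 0]$ shrinks to zero as $n \to \infty$, which is what closes the gap between the agreement strip coming from $L(n)$ and the band of forced agreement that sits just below $L$.
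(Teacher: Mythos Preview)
Your proof is correct and follows essentially the same strategy as the paper's. In both directions the arguments coincide: for the forward direction you translate so that the strip $L(r)$ sits inside $H^-(L)$ and then invoke positive expansiveness, exactly as the paper does; for the backward direction both you and the paper locate a disagreement point as close as possible to the boundary of the half-plane of disagreements, translate it to the origin, and pass to a convergent subsequence by compactness. Your use of the signed-distance function $t$ and the near-infimum choice $t(p_n) < t_n^* + 1/n$ is a cleaner packaging of what the paper accomplishes with its intersection of half-planes $H$, the enlarged half-plane $J$ with integer boundary points, and the vector $w_r$ chosen within distance $2$ of $B_r$; but the underlying idea and the limiting argument are identical.
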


\begin{proof} 
Suppose $L$ is positively expansive and $\eta,\nu\in X$ are such that $\eta(i,j) = \nu(i,j)$ for all $(i,j) \in H^-(L)$.  Find $r$ such that for any $\xi\in X$, $\xi|_{L(r)}$ extends uniquely to the half-space $H^+(L)$.  Let $v\in H^-(L)$ be such that the functions $\eta_v, \nu_v\in X$ defined by $\eta_v(x)=\eta(x+v)$ and $\nu_v(x)=\nu(x+v)$ have the same restriction to $L(r)\cup H^-(L)$.  Then by positive expansiveness of $L$, $\eta_v$ and $\nu_v$ coincide on $H^+(L)$ and hence on all of $\Z^2$.  So $\eta_v=\nu_v$ and it follows that $\eta=\nu$.  In other words, the restriction of $\eta$ to $H^-(L)$ extends uniquely to an $X$-coloring of $\Z^2$. 

Now suppose that for all $\eta\in X$ the restriction $\eta|_{H^-(L)}$ extends uniquely to an $X$-coloring of $\Z^2$.  We claim that $L$ is positively expansive.  For contradiction, suppose that for all $r>0$ there exist $\eta_r, \nu_r\in X$ such that $\eta_r|_{L(r)}=\nu_r|_{L(r)}$ but there exists 
$a_r\in H^+(L)$ such that $\eta_r(a_r)\neq\nu_r(a_r)$.  Define 
$$ 
B_r=\{(i,j)\in H^+(L)\colon\eta_r(i,j)\neq\nu_r(i,j)\}. 
$$ 
Let $H$ be the intersection of all closed half-planes (in $\R^2$) contained in $H^+(L)$
that contain $B_r$.  Fix some $x\in B_r$.  These half-planes are
linearly ordered by inclusion, all of them are contained in $H^+(L)$,
and all of them contain $x$. Thus their intersection is a closed half-plane
(which might not have any integer points on its boundary).  Therefore we can
find a closed half-plane $J\subseteq H^+(L)$, with integer points on its boundary, that contains $H$ and is such
that for all $y\in J\cap\Z^2$ there exists $z\in H\cap\Z^2$ with $\|y-z\|\leq1$.
Choose an integer vector $w_r\in\R^2\setminus J$ such that there exists
$v_r\in B_r\cap\Z^2$ satisfying $\|w_r-v_r\|\leq2$.  Finally, define
  $\eta_{r,w_r}, \nu_{r,w_r}\in X$ by $\eta_{r,w_r}(y)=\eta_r(y+w_r)$
  and $\nu_{r,w_r}(y)=\nu_r(y+w_r)$.  
Note that although vectors $w_r$ are not bounded,  we shift $\eta$ and $\nu$ so that $w_r$ 
is moved to the origin.  This shift is in the direction taking $H^-(L)$ into itself and thus preserves orientation in $\R^2$, 
ensuring that the shifted functions still agree on $H^-(L)$.  The purpose of the shift is that the point 
at which the functions disagree now can be bound in a bounded set.)
Then $\eta_{r,w_r}|_{H^-(L)}=\nu_{r,w_r}|_{H^-(L)}$ but there exists
$t_r\in H^+(L)\cap\left([-2,2]\times[-2,2]\right)$ such that
$\eta_{r,w_r}(t_r)\neq\nu_{r,w_r}(t_r)$.  We pass to a
subsequence $r_1<r_2<\cdots$ such that $t_r$ is constant.  By
compactness of $X$, we can pass if needed to a further subsequence along which
$\eta_{r_k,w_{r_k}}$ and $\nu_{r_k,w_{r_k}}$ both converge; call these
limiting functions $\eta_{\infty}$ and $\nu_{\infty}$.  By
construction $\eta_{\infty}(t_{r_1})\neq\nu_{\infty}(t_{r_1})$, but
$\eta_{\infty}|_{H^-(L)}=\nu_{\infty}|_{H^-(L)}$, a contradiction.
\end{proof}

\begin{prop}\label{prop: exp line}
Assume that $X\subset\Sigma^{\Z^2}$ is a \z2s and
 $L$ is a one-dimensional oriented subspace in the $u,v$-plane.  
Suppose there is a convex polygon $P \subset \R^2$ such that
\begin{enumerate}
\item There is a finite set $F \subset \Z^2$ such that $P$ is the  convex hull of 
$F$.
\item There is a unique $e \in F$ which is an extreme point of $P$
and which lies in $H^+(L)$.
\item For any $\eta \in X$, the restriction of $\eta$ to $F \setminus \{e\}$ 
extends uniquely to $F$.
\end{enumerate}
Then $L$ is positively expansive.  
\end{prop}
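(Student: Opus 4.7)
The plan is to argue by contradiction using Proposition~\ref{prop: +expansive}. Suppose $L$ fails to be positively expansive; then there exist distinct $\eta, \nu \in X$ that coincide on $H^-(L) \cap \Z^2$. Let $D = \{p \in \Z^2 : \eta(p) \neq \nu(p)\}$, which is a nonempty subset of $(L \cup H^+(L)) \cap \Z^2$. The goal is to translate $F$ so that its distinguished vertex $e$ lands on (a limit of) minimal-$\ell$ points of $D$, where $\ell$ is a linear functional maximized at $e$, and then invoke condition (3) to force $\eta$ and $\nu$ to agree there, producing a contradiction.

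The first step is a geometric observation. Take a unit vector $n$ normal to $L$ pointing into $H^+(L)$ and set $\ell(x) = \langle n, x\rangle$. Since $e$ is the unique extreme point of $P$ lying in $H^+(L)$, every other extreme point of $P$ lies in $L \cup H^-(L)$, so $\ell \leq 0$ there, while $\ell(e) > 0$. Because a linear functional on a polytope attains its maximum at an extreme point, $e$ is the unique maximizer of $\ell$ on $P$; in particular $\ell(x) < \ell(e)$ for every $x \in F \setminus \{e\}$.

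Next comes the key limit argument. Set $\alpha = \inf_{p \in D} \ell(p) \geq 0$ and choose $p_k \in D$ with $\ell(p_k) \to \alpha$. Writing $T_v$ for the $\Z^2$-shift, define $\eta_k = T_{p_k}\eta$ and $\nu_k = T_{p_k}\nu$, so $\eta_k(0) \neq \nu_k(0)$. For any $x \in \Z^2$ with $\ell(x) < \alpha - \ell(p_k)$, the inequality $\ell(x + p_k) < \alpha$ forces $x + p_k \notin D$ and hence $\eta_k(x) = \nu_k(x)$. Using compactness of $X$ together with finiteness of $\Sigma$, one may pass to a subsequence along which $\eta_k \to \eta_\infty$ and $\nu_k \to \nu_\infty$ with $\eta_\infty(0) \neq \nu_\infty(0)$. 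Since $\alpha - \ell(p_k) \to 0$, any fixed $x \in H^-(L) \cap \Z^2$ eventually satisfies $\ell(x) < \alpha - \ell(p_k)$, yielding $\eta_\infty(x) = \nu_\infty(x)$.

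To close the argument, observe that for any $x \in F \setminus \{e\}$ the bound $\ell(x - e) = \ell(x) - \ell(e) < 0$ places $x - e$ in $H^-(L) \cap \Z^2$, where $\eta_\infty$ and $\nu_\infty$ agree. Thus the shifted colorings $T_{-e}\eta_\infty$ and $T_{-e}\nu_\infty$ agree on $F \setminus \{e\}$, and condition (3) forces them to agree at $e$ as well, giving $\eta_\infty(0) = \nu_\infty(0)$ and contradicting the previous paragraph. The main obstacle is that when $L$ has irrational slope the infimum $\alpha$ need not be attained by any single $p \in D$, so the compactness limit (rather than a single direct translation of $P$) is what allows us to place $e$ at a genuine minimum of $\ell$ on the disagreement set.
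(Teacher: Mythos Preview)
Your argument is correct and follows the same overall strategy as the paper: argue by contradiction via Proposition~\ref{prop: +expansive}, look at the set where $\eta$ and $\nu$ disagree, and translate so that the distinguished vertex $e$ lands on a point of (near-)minimal signed distance to $L$, then apply hypothesis~(3). The difference is in how the non-attained infimum is handled. You take a sequence $p_k$ realizing the infimum, pass to subsequential limits $\eta_\infty,\nu_\infty$ using compactness of $X$, and then apply~(3) to the limit pair. The paper instead avoids any limiting procedure: it observes that the gap $\varepsilon=\min_{f\in F\setminus\{e\}}\bigl(\ell(e)-\ell(f)\bigr)>0$ is strictly positive, so choosing a single $b$ in the disagreement set with $\ell(b)<\alpha+\varepsilon/2$ already forces every $f+b-e$ (for $f\in F\setminus\{e\}$) below the infimum level, giving the contradiction directly. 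Your compactness route is conceptually tidy in that it unifies the attained and non-attained cases, while the paper's gap argument is slightly more elementary. Your final remark, that a single translation cannot work when $\alpha$ is not attained, is thus a little misleading: the paper shows it can, once one exploits the positive gap $\varepsilon$.
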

 
\begin{proof} 
For contradiction, suppose not.  Let $\eta, \nu\in X$ be such that $\eta|_{H^-(L)}=\nu|_{H^-(L)}$, but $\eta\neq\nu$.  Define $B=\{(i,j)\in H^+(L)\colon\eta(i,j)\neq\nu(i,j)\}$.  For each $b\in B$, define $d(b,L)$ to be the distance from $b$ to $L$ and let 
$$ 
I=\inf\{d(b,L)\colon b\in B\}. 
$$ 
For each $f\in F\setminus\{e\}$, let $d(f,e)$ be the distance between lines $L_e$ and $L_f$ parallel to $L$ that pass through $e$ and $f$, respectively.  Since $e\in H^+(L)$ and $f\notin H^+(L)$, for all $f\in F\setminus\{e\}$, we have $L_e\neq L_f$.  Thus 
$$ 
\varepsilon:=\min\{d(f,e)\colon f\in F\setminus\{e\}\}>0. 
$$ 

If there exists $b\in B$ such that $d(b,L)=I$, then define $\tilde{\eta}, \tilde{\nu}\in X$ by $\tilde{\eta}(x)=\eta(x+b-e)$ and $\tilde{\nu}(x)=\nu(x+b-e)$.  Then $\tilde{\eta}|_{H^-(L)}=\tilde{\nu}|_{H^-(L)}$ but $\tilde{\eta}(e)\neq\tilde{\nu}(e)$.  This contradicts the fact that the restriction of $\eta$ to $F\setminus\{e\}$ extends uniquely to an $X$-coloring of $F$. 

If for all $b\in B$ we have $d(b,L)>I$, then there exists $b\in B$ such that $d(b,L)-I<\varepsilon/2$.  Define $\tilde{\eta}, \tilde{\nu}\in X$ by $\tilde{\eta}(x)=\eta(x+b-e)$ and $\tilde{\nu}(x)=\nu(x+b-e)$.  Then $\tilde{\eta}(e)\neq\tilde{\nu}(e)$, but $\tilde{\eta}|_{F\setminus\{e\}}=\tilde{\nu}|_{F\setminus\{e\}}$, again a contradiction. 
\end{proof}

\begin{exs} \label{ex:three-dots}
~ 
\begin{enumerate}
\item Suppose $(X, \sigma)$ is a  shift and 
$\phi = \sigma^k,\  k \ne 0$. If $L$ is the line $i = k j$ 
and  $L^+ = L \cap \{(u,v) \colon v > 0\}$, then
$L$ is neither positively or negatively expansive,  but all other lines are expansive.

\item(Ledrappier's three dot system~\cite{ledrappier}). With the alphabet $\Sigma = \{0,1\}$, consider the subset of $\Sigma^{\Z^2}$ defined by 
$$
x(i,j) + x(i+1, j) + x(i, j+1) = 0 \pmod 2
$$
for all $i,j\in\Z$.  Other than the horizontal axis, the vertical
axis, and the reflected diagonal $y = -x$, every one-dimensional
subspace is expansive.  None of these three subspaces is expansive,
but each of them is either positively or negatively expansive.
\item(Algebraic examples; see~\cite{BL,ELMW} for further background).
With the alphabet $\Sigma = \{0,1\}$, consider the subset of $\Sigma^{\Z^2}$ defined by 
\[
x(i,j) + x(i+1, j+1) + x(i-1, j+2) = 0 \pmod 2
\]
for all $i,j\in\Z$.  It is not difficult to see that the subspaces parallel 
to the sides of the triangle with vertices $(0,0), (1,1)$, and
$(-1,2)$ each fail to be one of positively or negatively expansive (but not both).
All other one-dimensional subspaces are expansive.
\end{enumerate}
\end{exs}

\section{The spacetime of an endomorphism}\label{sec: spacetime}

\subsection{$\phi$-coding}
We continue to assume that $(X, \sigma)$ is an infinite shift over the finite alphabet $\Sigma$.

Some of the results in this section overlap with results of Nasu~\cite{nasu}, where he studies endomorphisms of subshifts that are resolving, which roughly speaking is a notion of being determined. While his language and terminology are different from ours, Lemma~\ref{lemma: welldef}  and Proposition~\ref{theta-cont} correspond to results in Section 6 of~\cite{nasu} and the limiting objects given in Definition~\ref{alpha def} and some of their properties (portions of Proposition~\ref{alpha prop}) are described in Section 9 of~\cite{nasu}.

\begin{defn}
\label{def:phi-codes}
If $\phi \in \End(X, \sigma)$ is an endomorphism
we say that a subset $A \subset \Z$ {\em $\phi$-codes} (or simply {\em codes}
if $\phi$ is clear from context)
a subset $B \subset \Z$ if for any $x,y \in X$ satisfying $x[a] = y[a]$ for all $a \in A$, it follows
that $\phi(x)[b] = \phi(y)[b]$ for all $b \in B$.
\end{defn}

We remark that if $\phi\in\End(X,\sigma)$ is an endomorphism then, as $\phi$ is determined by a block code of some range (say $R$), the ray $(-\infty,0]$ $\phi$-codes the ray $(-\infty,-R]$.  Similarly the ray $[0,\infty)$ $\phi$-codes the ray $[R,\infty)$.  Of course, it could be the case that $(-\infty,0]$ $\phi$-codes a larger ray than $(-\infty,-R]$.  This motivates the following definition: 

\begin{defn}
If $\phi \in \End(X, \sigma)$ and $n \ge 0$, 
let $W^+(n, \phi)$ be the smallest element of $\Z$ such
that the ray $[W^+(n, \phi), \infty)$ is $\phi^n$-coded by $[0,\infty)$
meaning that if $x$ and $y$ agree on
$[0,\infty)$, then necessarily $\phi^n(x)$ and $\phi^n(y)$ agree
on $[W^+(n, \phi), \infty)$ and this is the largest ray with that property.
Similarly $W^-(n,\phi)$ is the largest element of  $\Z$ such
that the ray $(-\infty ,W^-(n, \phi)]$ is $\phi^n$-coded by $(-\infty, 0]$.
When $\phi$ is clear from the context, we omit it from the notation and  denote $W^+(n, \phi)$ and $W^-(n, \phi)$ by 
$W^+(n)$ and $W^-(n)$, respectively.  
\end{defn}

Note that for $n\ge 1$ we have  $W^+(n, \phi) = W^+(1, \phi^n)$
and $W^-(n, \phi) = W^-(1, \phi^n)$.  These quantities have been studied in~\cite{sherevesky, tisseur} in order to define Lyapunov exponents for cellular automata, and then used to study the speed of propagation of perturbations with respect to a shift invariant measure.  They use
this to give bounds on the entropy of the measure in terms of these (left and right) Lyapunov exponents.  We do not consider the role of an invariant measure in this article, but we give an estimate
for topological entropy closely related to a result of ~\cite{tisseur} (see our Theorem~\ref{thm: entropy} below).



We check that $W^+(n, \phi)$ and $W^-(n,\phi)$ are well-defined: 
\begin{lemma}\label{lemma: welldef}  
If $X$ is infinite, then $W^+(n,\phi)>-\infty$ and $W^-(n,\phi)<\infty$. 
\end{lemma}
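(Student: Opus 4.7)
I will prove the bound for $W^+(n,\phi)$; the bound for $W^-(n,\phi)$ follows by the obvious symmetric argument (replacing $[0,\infty)$ by $(-\infty,0]$).  The strategy is to argue by contradiction: assume $W^+(n,\phi) = -\infty$, deduce that $\phi^n(x)$ is rigidly determined on an entire left tail by the block code of $\phi^n$ (and hence independent of $x$ there), and then use surjectivity and shift-invariance to force $|X| = 1$.

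First, unwrapping the definition, $W^+(n,\phi) = -\infty$ says that for every $N \in \Z$, the ray $[N,\infty)$ is $\phi^n$-coded by $[0,\infty)$.  Taking $N \to -\infty$, this means: whenever $x,y \in X$ agree on $[0,\infty)$, we have $\phi^n(x)=\phi^n(y)$.  In other words, the value $\phi^n(x)$ is a function only of $x|_{[0,\infty)}$.

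Next, by the Curtis--Hedlund--Lyndon theorem, $\phi^n$ is a sliding block code of some range $R$, so $\phi^n(x)[m]$ depends only on $x[m-R],\dots,x[m+R]$.  Combining this with the conclusion of the previous step, for any integer $m\leq -R-1$ the window $[m-R,m+R]$ lies entirely in $(-\infty,-1]$, while simultaneously $\phi^n(x)[m]$ depends only on $x|_{[0,\infty)}$.  Hence for such $m$ the value $\phi^n(x)[m]$ does not depend on $x$ at all; call the common value $c_m \in \Sigma$.  Since $\phi^n$ commutes with $\sigma$, the $c_m$ in fact do not depend on $m$ either: writing $c := c_{-R-1}$, we conclude that $\phi^n(x)[m] = c$ for every $x \in X$ and every $m \leq -R-1$.

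Finally, because $\phi$ (hence $\phi^n$) is surjective, every $y \in X$ satisfies $y[m]=c$ for all $m \leq -R-1$.  Applying this to $\sigma^k y \in X$ for arbitrary $k \geq 0$ gives $y[j]=c$ for all $j \leq k-R-1$; letting $k \to \infty$ shows $y$ is the constant sequence $\ldots ccc\ldots$  Thus $X$ is a single point, contradicting the standing assumption $|X|=\infty$.  I expect no serious obstacle: the only subtle point is step two, where one must be careful to distinguish ``coded by $[0,\infty)$'' (a one-sided hypothesis) from the two-sided symmetric window coming from the block code, but this is precisely the source of the contradiction.
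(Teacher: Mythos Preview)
There is a genuine gap in your second step.  You argue that for $m\le -R-1$ the value $\phi^n(x)[m]$ is simultaneously a function of $x|_{[m-R,m+R]}\subset(-\infty,-1]$ (block code) and of $x|_{[0,\infty)}$ (the contradiction hypothesis), and you conclude that it therefore does not depend on $x$ at all.  That inference is valid on a full shift, where any admissible word on $[m-R,m+R]$ can be freely paired with any admissible right tail on $[0,\infty)$, but it fails for a general subshift: knowing that $f(x)=f_A(x|_A)=f_B(x|_B)$ with $A\cap B=\emptyset$ forces $f$ to be constant only if every pair $(x|_A,\,y|_B)$ with $x,y\in X$ is realised by a single $z\in X$.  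A subshift need not have this independence property (think of a product $X=\{a^\infty,b^\infty\}\times X_2$ with $X_2$ infinite: the first coordinate at position $0$ is determined by $x|_{\{0\}}$ and also by $x|_{\{1\}}$, yet is not constant).  So the sentence ``Hence for such $m$ the value $\phi^n(x)[m]$ does not depend on $x$ at all'' is not justified, and the rest of the argument collapses.

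The paper avoids this trap by not trying to prove constancy.  Instead it uses a compactness argument to upgrade the one-sided coding hypothesis to a \emph{finite} coding statement: for each $s>0$ there exists $M$ such that $x[0,M]$ determines $(\phi x)[-s,M-R]$.  Taking $s=R+1$ yields a surjection from $\mathcal L_{M+1}(X)$ onto $\mathcal L_{M+2}(X)$ via $\phi$, hence $P_X(M+2)\le P_X(M+1)$, and monotonicity of $P_X$ then forces the complexity to be eventually constant, contradicting $|X|=\infty$.  If you want to repair your approach, you would need some extra ingredient---for instance a continuity/compactness step showing that dependence on $x|_{[k,\infty)}$ for \emph{every} $k$ forces $\phi^n$ to be constant---but that again requires knowing that arbitrary points of $X$ can be approximated by points right-asymptotic to a given one, which is false in general.
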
 
\begin{proof} 




For contradiction, suppose $W^+(n,\phi)=-\infty$ so that whenever $x,y\in X$ and $x[0,\infty)=y[0,\infty)$ we have $\phi x=\phi y$.  Let $R$ denote the range of the block code defining $\phi$. 

 For any fixed $s>0$, we claim that there exists $M\in\N$
such that if $x,y\in X$ and $x[0,M]=y[0,M]$, then $(\phi x)[-s,M-R]=y[-s,M-R]$.  For 
contradiction, suppose not.  Then there exist sequences $(x_n)$ and
$(y_n)$ of points in $X$ such that $x_n[0,n]=y_n[0,n]$, but
$(\phi x_n)[-s,n-R]\neq(\phi y_n)[-s,n-R]$.  Since $\phi$ is a block code, observe that $(\phi x_n)[R,n-R]=(\phi y_n)[R,n-R]$.  Passing to a subsequence if necessary, 
we can assume that $x_{n+1}[0,n]=x_n[0,n]$, $(\phi x_{n+1})[-s,n-R]=(\phi x_n)[-s,n-R]$, $y_{n+1}[0,n]=y_n[0,n]$, and $(\phi y_{n+1})[-s,n-R]=(\phi y_n)[-s,n-R]$ for all $n\in\N$.  Let $z_x, z_y\in X$ be such
that $z_x[0,n]=x_n[0,n]$, $(\phi z_x)[-s,n-R]=(\phi x_n)[-s,n-R]$, $z_y[0,n]=y_n[0,n]$, and $(\phi z_y)[-s,n-R]=(\phi y_n)[-s,n-R]$ for all $n\in\N$.
Then $z_x[0,\infty)=z_y[0,\infty)$ but $(\phi z_x)[-s,\infty)\neq(\phi z_y)[-s,\infty)$,  a
contradiction.  This proves the claim and shows that the integer $M$ exists.  

Taking $s=R+1$, observe that if $w\in\mathcal{L}_{M+1}(X)$ then there exists $u(w)\in\mathcal{L}_{M+2}(X)$ such that for any $x\in X$ and for any $i\in\Z$ such that $w=x[i,i+|w|-1]$, we have $u=(\phi x)[i-s,i+|w|]$.  Since $\phi$ is surjective, for any $u\in\mathcal{L}_{M+2}(X)$ there exists $w\in\mathcal{L}_{M+1}(X)$ such that $u=u(w)$.  In other words, $P_X(M+2)\leq P_X(M+1)$.  But $P_X$ is nondecreasing and so $P_X(M+2)=P_X(M+1)$.  It follows inductively that $P_X(M+k)=P_X(M+1)$ for any $k\in\N$.  But then $X$ is finite,  a contradiction.  Therefore $W^+(n,\phi)>-\infty$. 

The argument that $W^-(1,\phi)<\infty$ is similar. 
\color{black}
\end{proof} 

By Lemma~\ref{lemma: welldef}, the function 
$\Theta^+_n\colon \Sigma^{[0,\infty)} \to \Sigma^{[W^+(n,\phi),\infty)}$ defined by
\[
\Theta^+_n(x[0,\infty)) = \phi^n(x)[W^+(n,\phi),\infty)
\]
is well defined for all $n\ge 0$, as is the analogous function
$\Theta^-_n\colon  \Sigma^{(-\infty, 0]} \to \Sigma^{(-\infty,W^-(n,\phi)]}$. These functions are continuous: 
\begin{prop}\label{theta-cont}
The functions $\Theta_n^+$ and $\Theta_n^-$ are continuous.  In particular, 
there exists $k = k(n, \phi) >0$ such that $[0,k]$ $\phi^n$-codes $\{W^+(n,\phi)\}$
and $[-k,0]$ $\phi^n$-codes $\{W^-(n,\phi)\}$.
\end{prop}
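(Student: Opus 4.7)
The plan is to first establish the ``in particular'' finite-coding statement via a compactness argument, and then derive continuity of $\Theta_n^+$ and $\Theta_n^-$ as a formal consequence using shift-invariance. This order is cleaner than going the other way because the compactness step is where all the real work lies.

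To produce $k$ such that $[0,k]$ $\phi^n$-codes $\{W^+(n,\phi)\}$, I would argue by contradiction. If no such $k$ exists, then for every $k\in\N$ there exist $x_k,y_k\in X$ with $x_k[0,k]=y_k[0,k]$ but $\phi^n(x_k)[W^+(n,\phi)]\neq\phi^n(y_k)[W^+(n,\phi)]$. By compactness of $X$, pass to a subsequence along which $x_k\to x$ and $y_k\to y$. For every fixed coordinate $j\geq 0$ the equality $x_k[j]=y_k[j]$ holds once $k\geq j$, so $x[0,\infty)=y[0,\infty)$. Since $\Sigma$ is finite, a further subsequential extraction stabilizes $\phi^n(x_k)[W^+(n,\phi)]$ at some symbol $a$ and $\phi^n(y_k)[W^+(n,\phi)]$ at some symbol $b\neq a$. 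Continuity of $\phi^n$ then forces $\phi^n(x)[W^+(n,\phi)]=a\neq b=\phi^n(y)[W^+(n,\phi)]$, contradicting the defining property of $W^+(n,\phi)$ (namely that $[W^+(n,\phi),\infty)$ is $\phi^n$-coded by $[0,\infty)$). The mirror-image argument, with all inequalities reversed, furnishes a $k$ such that $[-k,0]$ $\phi^n$-codes $\{W^-(n,\phi)\}$; replacing $k$ by the maximum of the two values produces a single $k=k(n,\phi)$ that works for both.

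For continuity of $\Theta_n^+$ I fix a coordinate $m\geq W^+(n,\phi)$ of the output and show that $\phi^n(x)[m]$ depends on only finitely many coordinates of $x[0,\infty)$. Since $\phi^n$ commutes with $\sigma$ and $X$ is shift invariant,
\[
\phi^n(x)[m]=\phi^n\bigl(\sigma^{m-W^+(n,\phi)}x\bigr)[W^+(n,\phi)],
\]
and the finite-coding statement just established shows that this quantity depends only on the coordinates $\bigl(\sigma^{m-W^+(n,\phi)}x\bigr)[0,k]=x\bigl[m-W^+(n,\phi),\,m-W^+(n,\phi)+k\bigr]$. Since $m\geq W^+(n,\phi)$, this window lies in $[0,\infty)$, so $\Theta_n^+$ is continuous in the product topology. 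The argument for $\Theta_n^-$ is symmetric.

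The main obstacle is orchestrating the compactness extraction so that two things happen simultaneously: (i) the growing windows of agreement $[0,k_j]$ propagate in the limit to full agreement on $[0,\infty)$, and (ii) the persistent disagreement at the single coordinate $W^+(n,\phi)$ survives the limit, which requires the auxiliary subsequence using finiteness of $\Sigma$. Neither step is deep on its own, but both must be carried out together and matched against the minimality defining $W^+(n,\phi)$ in order to extract the contradiction.
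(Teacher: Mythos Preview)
Your argument is correct and uses essentially the same compactness idea as the paper: pass to a limit along points agreeing on ever-longer initial segments, invoke continuity of $\phi^n$, and contradict the defining property of $W^+(n,\phi)$. The only difference is organizational---the paper proves continuity of $\Theta_n^+$ directly (fixing one target point $y$ and a sequence $x_j$ agreeing with it on $[0,m_j]$), whereas you first extract the finite-coding statement at the single coordinate $W^+(n,\phi)$ using two varying sequences and then propagate to all output coordinates via shift-commutation; neither route requires anything the other does not.
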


\begin{proof}
Assume $\Theta_n^+$ is not continuous. Then there exist $x_j$ and
$y$ in $X$ and $r \ge W^+(n,\phi) $ such that
$x_j[0, m_j] = y[0,m_j]$, for a sequence $\{m_j\}$ with $\displaystyle\lim_{j\to\infty} m_j = \infty$, and such that $\phi(x_n)[r] \ne \phi(y)[r]$.  By passing to a subsequence, w can assume that 
there exists $z \in X$ with $\displaystyle \lim_{n\to\infty} x_n = z$.  Clearly
$z[0, \infty) = y[0, \infty)$ and hence
$\phi^n(z)[W^+(n,\phi), \infty) = \phi(y)[W^+(n,\phi), \infty)$.  In
particular,  $\phi(z)[r] = \phi(y)[r]$, and so by continuity of $\phi$ we
conclude that $\displaystyle\lim_{n\to\infty} \phi(x_n)[r] = \phi(z)[r] = \phi(y)[r]$.  But
since $\phi(x_n)[r] \ne \phi(y)[r]$,  we also have that 
$\displaystyle\lim_{n\to\infty} \phi(x_n)[r] \ne \phi(y)[r]$, a contradiction.  
Thus $\Theta_n^+$ is continuous, and a similar argument shows that 
$\Theta_n^-$ is continuous.
\end{proof}

\subsection{The spacetime of $\phi$}
\begin{defn}
If $\phi \in \End(X, \sigma)$ is an endomorphism, 
its {\em $\phi$-spacetime} $\U  = \U(\phi)$ is a \z2s
together with a preferred ordered
basis for $\Z^2$ which defines what we call the 
``horizontal''  and ``vertical'' directions. It
is defined to be the closed subset of  $x \in \Sigma^{\Z^2}$ such that
for all $i \in \Z$ and $j\ge 0$
$\phi^j(x)[i] = x(i, j)$.

\end{defn}
Thus the rows of $\U$ are elements of $X$ with row $n$ equal 
to $\phi$ of row $n-1$.  There is an action of $\Z^2$ on $\U$
given by having $(i,j)$ shift $i$ times in the horizontal direction
and $j$ times in the vertical direction.  A vertical shift by $j \ge 0$ can
also be viewed as applying $\phi^j$ to each row of $\U$.

It follows immediately from the definition of expansiveness (Definition~\ref{def: expansive})  
that the horizontal axis in a spacetime $\mathcal{U}$ of an automorphism
is always an expansive 
subspace for the \z2s $\mathcal{U}$ with the $\Z^2$-action by translations. 
Also if $L$ is the horizontal axis in  the spacetime of an
endomorphism  and $L^+$ is the intersection of $L$ with the positive
horizontal axis,  then $H^+(L)$ is the upper half space
and $L$  is positively expansive.

Note that given a spacetime $\U$ (including the preferred basis of $\Z^2$), one can extract
the shift $(X, \sigma)$ by taking  $X$ to be the $\Sigma$-colorings of $\Z$ obtained by
restricting the colorings in $\U$ to the $i$-axis ($j= 0$).  Likewise,  one can 
extract the endomorphism $\phi$ by using the fact that if $y \in \U$ and
$x \in X$ is given by $x[i] = y[i, 0]$, then $\phi(x)[i] = y[i, 1]$.

A concept somewhat more general than our notion of spacetime
is defined in Milnor~\cite{milnor} and referred to as the  {\em complete history}
of a cellular automaton. Our context is narrower, using the spacetime to study a 
single endomorphism rather than the full system. However, there are analogs in our 
development; Milnor defines an $m$-step forward cone, 
which corresponds to our interval $[W^-(m,\phi), W^+(m, \phi)]$, his definition of a 
limiting forward cone corresponds to our asymptotic light cone, and the case $n_0=0$ of Theorem~\ref{thm: interval-code}
corresponds to results in Milnor.  

We say that spacetimes $\U$ and $\U'$, which share the same 
alphabet $\Sigma$, are {\em spacetime isomorphic} if there is a homeomorphism
$h\colon \U \to \U'$ such that 
\[
h(z)(i',j') = z(i,j), 
\]
where the isomorphism of $\Z^2$ for which $(i,j) \mapsto (i',j')$ is given
by sending the preferred basis of $\Z^2$ for $\U$ to the preferred basis of $\U'$. 
(Note that the assumption that the spacetimes share the same alphabet is not necessary, but 
simplifies our notation.)
It is straightforward to check  that 
$\phi, \phi' \in \Aut(X) $ are conjugate automorphisms
(see Definition~\ref{def: conjugacy}) if and only if
their respective spacetimes 
are spacetime isomorphic.

We extend definition~\ref{def:phi-codes} of coding to a spacetime: 
\begin{defn}
If $\U$ is a \z2s, we say that a subset 
$A \subset \Z^2$ {\em codes} a subset $B \subset \Z^2$ if 
for any $x,y \in \U$ satisfying $x(i,j) = y(i,j)$ for all $(i,j) \in A$, it follows
that $x(i',j') = y(i',j')$ for all $(i', j') \in B$. Equivalently if $x$ and $y$
differ at some point of $B$,  they also differ at some point of $A$.
\end{defn}

\begin{defn}[{\bf Light Cone}]
The {\em future light cone $\C_f(\phi)$ of $\phi \in \End(X)$} is defined to be 
\[
\C_f(\phi) = \{(i,j) \in \Z^2 \colon W^-(j, \phi) \le i \le W^+(j, \phi),\  j \ge 0\}
\]
The {\em past light cone $\C_p(\phi)$ of $\phi$} is defined to be  
$\C_p(\phi) = - \C_f(\phi)$.  The {\em full light cone}
$\C(\phi)$ is defined to be $\C_f(\phi) \cup C_p(\phi)$.
 \end{defn}

The rationale for this terminology is that if $x \in X$ 
and $j >0$, then a 
change in the value of $x(0)$ (and no other changes)
 can only cause a change in $\phi^j(x)[i],\  j \ge 0$ if $(i,j)$ lies
in the future light cone of $\phi$.  Similarly if $\phi^j(y) = x,\ j \ge 0,$ 
then a change in $y[i]$  can only affect $x[0]$ if $(i, -j)$ lies
in the past light cone of $\phi$.

The light cone is naturally stratified into levels: 
define the {\em $n^{th}$ level of $\C(\phi)$} to be the set 
\begin{equation}\label{def:I}
\cI(n, \phi):=  \{ i \in \Z \colon (i, n) \in \C(\phi)\}.
\end{equation}

In Corollary~\ref{cor: I(n,phi)} below, we show that if $\sigma$ is
a subshift of finite type and $n$ is large, then the horizontal
interval in the light cone at level $-n$ i.e., 
$\cI(-n, \phi)$, is the unique minimal interval which
$\phi^{n}$-codes $\{0\}$,  provided $\phi$ has infinite order in
$\End(X,\sigma)/\langle\sigma\rangle$. 

In general, it is not clear if $\phi \in \Aut(X),$ what 
the relationship, if any, between $\C(\phi)$ and $\C(\phi^{-1})$ is.  
However there are some restrictions given in Part~\eqref{item:five} of Proposition~\ref{alpha prop}.

\begin{remark}
A comment about notation is appropriate here. We are interested in subsets
of the $i,j$-plane. Our convention is that $i$ is the abscissa, or first coordinate, 
and we consider the $i$-axis to be horizontal.  Likewise 
$j$ is the ordinate, or second coordinate, 
and we consider the $j$-axis to be vertical. However some subsets
of the plane we consider are naturally described as graphs of a function
$i = f(j)$.  For example, we frequently consider lines given by an
equation like $i = \alpha j,\ j \in \R$,  and think of $\alpha$ as a ``slope'' even
in standard parlance it would be the reciprocal of the slope of the
line $i = \alpha j$.
\end{remark}

Our next goal is to study the asymptotic behavior of 
$W^+(j, \phi)$ and $W^-(j, \phi)$ for a fixed $\phi \in \End(X)$.
We start by recalling Fekete's Lemma,  which is then applied to  the sequence 
$W^+(n) = W^+(n,\phi)$ for $n \ge 0$ which is shown to be subadditive.

\begin{lemma}[Fekete's Lemma~\cite{Fek}]\label{fekete}
If the sequence $a_n \in \R,\  n \in \N$, is subadditive (meaning that $a_n + a_m \ge a_{m+n}$ for all $m,n\in\N$), then
\[
\lim_{n \to \infty} \frac{a_n}{n} = \inf_{n \ge 1}\frac{a_n}{n}.
\]
\end{lemma}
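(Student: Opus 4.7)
The plan is to prove the standard two-sided inequality: show that $\liminf_{n\to\infty} a_n/n \geq L$ trivially from the definition of infimum, and then show that $\limsup_{n\to\infty} a_n/n \leq L$ by exploiting subadditivity to control $a_n$ by a fixed ``good'' value $a_m/m$. Here $L := \inf_{n \geq 1} a_n/n$, which a priori could be $-\infty$; I would handle the finite and infinite cases in parallel by choosing for each real $M > L$ some index $m$ with $a_m/m < M$ and then showing $\limsup a_n/n \leq M$.

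The key step is the Euclidean division trick. Fix such an $m$ and write an arbitrary $n$ as $n = qm + r$ with $0 \leq r < m$. Iterating the subadditivity hypothesis $q$ times gives $a_{qm} \leq q\, a_m$, and one more application (when $r \geq 1$) yields
\[
a_n \;\leq\; q\, a_m + a_r,
\]
with the understanding that the $a_r$ term is absent when $r = 0$. Dividing by $n$ produces
\[
\frac{a_n}{n} \;\leq\; \frac{q m}{n}\cdot\frac{a_m}{m} + \frac{a_r}{n}.
\]
Now hold $m$ fixed and let $n \to \infty$. Since $r$ ranges over the finite set $\{0,1,\dots,m-1\}$, the values $a_r$ form a finite (hence bounded) collection, so $a_r/n \to 0$. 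And $qm/n = 1 - r/n \to 1$. Therefore $\limsup_{n\to\infty} a_n/n \leq a_m/m < M$.

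Since $M > L$ was arbitrary, we conclude $\limsup_{n\to\infty} a_n/n \leq L$. Combined with the obvious lower bound $a_n/n \geq L$ for every $n$, this forces the limit to exist and equal $L$. The only delicate point is the bookkeeping when $r = 0$ (to avoid referring to an undefined $a_0$) and handling the case $L = -\infty$ uniformly, but both are minor and dispatched by the choice of $M$ and by treating the $r=0$ case separately in the subadditivity iteration. No serious obstacle is expected.
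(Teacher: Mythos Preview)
Your argument is the standard, correct proof of Fekete's Lemma via Euclidean division, and the bookkeeping you flag (the $r=0$ case and the possibility $L=-\infty$) is handled properly. Note, however, that the paper does not actually give a proof of this lemma: it is stated with a citation to Fekete's original 1923 paper and then used without argument, so there is no ``paper's own proof'' to compare against.
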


We note a simple, but useful, consequence of this: 
if $s(n)$ is subadditive,  
and if $\displaystyle \lim_{n \to \infty} \frac{s(n)}{n} \ge 0$,  then $s(n) \ge 0$ for all $n \ge 1$
as otherwise $\displaystyle \inf_{m \ge 1}\frac{s(m)}{m}$ would be negative.

\begin{lemma}
\label{lemma:subadd}
If $\phi, \psi \in \End(X,\sigma)$ then
$W^+(1,\phi \psi) \le W^+(1,\phi) +W^+(1,\psi)$  and
similarly $W^-(1,\phi \psi) \ge W^-(1,\phi) + W^-(1,\psi)$.
In particular the sequences  $\{W^+(n, \phi)\}$ and
$\{-W^-(n,\phi)\}, \ n \ge 0$,  are subadditive.
\end{lemma}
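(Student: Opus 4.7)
The plan is to deduce the composition inequality for $W^+$ directly from its defining property as the ``minimal coding radius'' from the right, exploiting the fact that every endomorphism commutes with $\sigma$. The inequality for $W^-$ will be the mirror-image argument on rays extending to $-\infty$ (with the direction reversed because $W^-$ records a right endpoint, not a left one), and the subadditivity of $\{W^+(n,\phi)\}$ and $\{-W^-(n,\phi)\}$ will fall out by specializing the composition inequality to $\phi^n$ and $\phi^m$ and invoking the identity $W^{\pm}(n,\phi)=W^{\pm}(1,\phi^n)$ noted just after the definition.

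The key step I would carry out is a two-stage tracking of agreement. Fix $x,y\in X$ that agree on $[0,\infty)$ and set $a=W^+(1,\psi)$. By definition of $W^+(1,\psi)$, the points $\psi(x)$ and $\psi(y)$ agree on $[a,\infty)$, so the shifted points $\sigma^{a}\psi(x)$ and $\sigma^{a}\psi(y)$ agree on $[0,\infty)$. Applying the definition of $W^+(1,\phi)$, their images under $\phi$ agree on $[W^+(1,\phi),\infty)$; using $\phi\sigma=\sigma\phi$ to pull the shift out, this says that $\phi\psi(x)$ and $\phi\psi(y)$ agree on $[W^+(1,\phi)+a,\infty)$. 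Minimality of $W^+(1,\phi\psi)$ then yields $W^+(1,\phi\psi)\le W^+(1,\phi)+W^+(1,\psi)$. The analogous argument on rays $(-\infty,0]$ gives the corresponding inequality for $W^-$, with the inequality reversed as noted.

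Subadditivity is then immediate from the composition bound: writing $W^+(n+m,\phi)=W^+(1,\phi^{n+m})=W^+(1,\phi^n\circ\phi^m)\le W^+(n,\phi)+W^+(m,\phi)$, and similarly $-W^-(n+m,\phi)\le -W^-(n,\phi)-W^-(m,\phi)$. I do not foresee a real obstacle here; the only point requiring care is that the manipulation is meaningful only when $W^\pm(1,\phi)$ and $W^\pm(1,\psi)$ are finite integers (so that the shifts $\sigma^{W^\pm}$ make sense and the minimality/maximality statements are non-vacuous), and this finiteness is exactly what Lemma~\ref{lemma: welldef} provides under our standing assumption that $X$ is infinite.
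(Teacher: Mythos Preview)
Your proof is correct and follows essentially the same approach as the paper: the paper phrases the key step in the coding language (observing that $[0,\infty)$ $\psi$-codes $[W^+(1,\psi),\infty)$, which in turn $\phi$-codes $[W^+(1,\phi)+W^+(1,\psi),\infty)$), while you unpack the same translation-invariance explicitly via the shift $\sigma^a$ and the commutation $\phi\sigma=\sigma\phi$. The deduction of subadditivity by specializing to $\phi^n$ and $\phi^m$ is identical to the paper's.
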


\begin{proof}
The ray $[0,\infty)$ $\psi$-codes $[W^+(1,\psi),\infty)$ 
and the ray $[W^+(1,\psi),\infty)$ $\phi$-codes $[W^+(1,\phi) 
+ W^+(1,\psi),\infty)$. Hence $[0, \infty)$ $\phi \psi$-codes  
$[W^+(1,\phi) +W^+(1,\psi),\infty)$
so $W^+(1,\phi \psi) \le W^+(1,\phi) + W^+(1,\psi)$. This proves the
first assertion.

Replacing $\phi$ by $\phi^m$ and $\psi$ by $\phi^n$ in this
inequality gives
\[
W^+(1, \phi^{n+m}) \le W^+(1, \phi^{m}) + W^+(1, \phi^{n}).
\]
Since for $n\ge 1$ we have  $W^+(n, \phi) = W^+(1, \phi^n)$
we conclude that $ W^+(m+n, \phi) \le W^+(m, \phi) + W^+(n, \phi)$,
so $\{W^+(n, \phi)\}$ is subadditive. The proof for $W^-$ is similar.
\end{proof}

We now want to consider two quantities which measure the asymptotic
behavior of $W^\pm(n, \phi)$.  These quantities (and other closely related
ones) have been considered in ~\cite{sherevesky, tisseur} in the context
of measure preserving cellular automata and are referred to there as
Lyapunov exponents of the automaton.
If we fix $\phi$ and abbreviate $W^+(n, \phi)$ by
$W^+(n)$ then Fekete's Lemma and Lemma~\ref{lemma:subadd}, 
imply the limit  $\displaystyle{ \lim_{n \to \infty} \frac{W^+(n)}{n}}$ exists.

\begin{defn} \label{alpha def}
We define
\[
\alpha^+(\phi) := \lim_{n \to \infty} \frac{W^+(n)}{n}
\] and 
\[
\alpha^-(\phi) := \lim_{n \to \infty} \frac{W^-(n)}{n}. 
\]
\end{defn}

Note that the limit $\alpha^+(\phi)$ is finite, since 
if $D \ge \rr(\phi)$, then for $j \ge 0$ we have
$|W^+(j))| \le D j$ (and $|W^-(j))| \le D j$). 
As a consequence, we conclude that 
\begin{equation}\label{eq:W-plus}
W^+(n) = n \alpha^+(\phi) + \lo(n).  
\end{equation}
This describes an important asymptotic property of the right light cone boundary function
$W^+(n)$ used in the proof of Theorem~\ref{thm nonexpansive} below.  That theorem
says that if $\alpha^+ = \alpha^+(\phi)$, then
the  line $i = \alpha^+ j$ 
is a nonexpansive subspace of  $\R^2$ for the spacetime of $\phi.$ 

Similarly, we can consider $W^-(n)$ and obtain a second nonexpansive subspace
namely the line $x = \beta y$ where
\[
\beta = \alpha^-(\phi) := \lim_{n \to \infty} \frac{W^-(n)}{n}. 
\]
As a consequence, we conclude that 
the left light cone boundary function satisfies
\begin{equation}
\label{eq:W-minus}
W^-(n) =n \alpha^-(\phi)+\lo(n).
\end{equation}

We list some elementary properties of the limits $\alpha^{+}(\phi)$ and $\alpha^{-}(\phi)$: 
\begin{prop}\label{alpha prop}
If $\phi \in \End(X, \sigma)$ then
\begin{enumerate}
\item
\label{item:one}
 For all $k \in \Z,\ \alpha^-(\sigma^k\phi) 
= \alpha^-(\phi) +k$ and $\alpha^+(\sigma^k\phi) 
= \alpha^+(\phi) +k$.
\item 
\label{item:two}
For all $m \in \N, \alpha^+(\phi^m) = m \alpha^+(\phi)$
and $\alpha^-(\phi^m) = m \alpha^-(\phi)$
\item 
\label{item:three}
If $X$ is infinite, then $\alpha^-(\phi) \le \alpha^+(\phi)$
\item 
\label{item:four}
If $\phi, \psi \in \Aut(X, \sigma)$ are commuting endomorphisms
then 
\[
\alpha^+(\phi \psi) \le \alpha^+(\phi) + \alpha^+(\psi) \text{ and }
\alpha^-(\phi \psi) \ge \alpha^-(\phi) + \alpha^-(\psi).
\]
\item \label{item:five}
If $\phi$ is an automorphism and X is infinite, then 
\[
\alpha^+(\phi) + \alpha^+(\phi^{-1}) \ge 0 \text{ and }
\alpha^-(\phi) + \alpha^-(\phi^{-1}) \le 0.
\]

\end{enumerate}
\end{prop}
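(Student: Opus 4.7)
The plan is to dispatch parts (1), (2), (4), and (5) directly from the definitions and Lemma~\ref{lemma:subadd}, leaving (3) as the principal technical obstacle.

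For part (1), commutativity $\sigma\phi = \phi\sigma$ yields $(\sigma^k\phi)^n = \sigma^{kn}\phi^n$, so applying the definition of $W^\pm$ to this composition gives $W^\pm(n, \sigma^k\phi) = W^\pm(n,\phi) + kn$; dividing by $n$ and taking $n\to\infty$ yields the claim. For (2), the remark $W^\pm(n,\phi) = W^\pm(1,\phi^n)$ combined with $(\phi^m)^n = \phi^{mn}$ gives $W^\pm(n,\phi^m) = W^\pm(mn,\phi)$, whose division by $n$ yields $\alpha^\pm(\phi^m) = m\alpha^\pm(\phi)$. For (4), commutativity gives $(\phi\psi)^n = \phi^n\psi^n$ and then Lemma~\ref{lemma:subadd} applied to $\phi^n,\psi^n$ gives $W^+(n,\phi\psi) \le W^+(n,\phi) + W^+(n,\psi)$; divide by $n$ for the $\alpha^+$ inequality, and argue symmetrically for $\alpha^-$. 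For (5), apply (4) with $\psi = \phi^{-1}$; since the identity block code has range $0$, $\alpha^\pm(\mathrm{id}) = 0$, and the stated inequalities follow.

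For part (3), I argue by contradiction. Suppose $\alpha^-(\phi) > \alpha^+(\phi)$. By equations \eqref{eq:W-plus} and \eqref{eq:W-minus}, there exists $n$ with $W^-(n,\phi) > W^+(n,\phi)$. Replacing $\phi$ by $\phi^n$ and invoking (2) reduces to the case $W^-(1,\phi) > W^+(1,\phi)$, and conjugating by the appropriate power of $\sigma$ via (1) reduces further to the normalized case $W^+(1,\phi) = 0$ and $W^-(1,\phi) = K$ for some $K > 0$. In this setting $\phi(x)[0,K]$ is a word of length $K+1$ in $\CL(X)$ that is determined by $x|_{(-\infty,0]}$ alone and also by $x|_{[0,\infty)}$ alone. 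Proposition~\ref{theta-cont} combined with compactness refines both dependencies to finite windows $x[-N,0]$ and $x[0,N']$; surjectivity of $\phi$ and shift-invariance of $X$ then give $\{\phi(x)[0,K] : x\in X\} = \CL_{K+1}(X)$, and hence $P_X(K+1) \le P_X(N+1)$.

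The main obstacle is that in general the continuity modulus $N$ can be of the same order as $K$ --- both are bounded by the range of the block code --- so this single inequality need not by itself force $|X|$ finite. My plan for closing the argument is an iterative amplification: Lemma~\ref{lemma:subadd} implies that the gap $W^-(n) - W^+(n)$ is super-additive, so passing from $\phi$ to $\phi^m$ and re-normalizing via (1) enlarges $K$ without disturbing the left edge, while a careful two-sided analysis using both the left- and right-side codings together should control the growth of the associated continuity modulus. The goal is to obtain an inequality $P_X(L') \le P_X(L)$ with $L' > L$, which combined with monotonicity of $P_X$ forces $P_X$ to be eventually constant and therefore $|X|$ finite, yielding the desired contradiction. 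I expect this amplification step to be the delicate heart of the proof.
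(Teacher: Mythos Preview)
Your treatment of parts (1), (2), (4), and (5) is correct and matches the paper's proof essentially line for line.

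For part (3), however, your argument is genuinely incomplete, and the ``iterative amplification'' you sketch is not the way out. Passing to $\phi^m$ does make the gap $K_m := W^-(m)-W^+(m)$ grow (it is superadditive, so $K_m \ge mK$), but the continuity modulus $N_m$ you need is only bounded by the range of $\phi^m$, which can also grow linearly in $m$; there is no evident mechanism forcing $N_m < K_m$. So the amplification does not obviously gain anything.

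The paper's resolution avoids powers of $\phi$ entirely. Fix a single $n$ with $W^-(n) > W^+(n)$ and fix the continuity modulus $R$ once and for all (so that $[0,R]$ $\phi^n$-codes $\{W^+(n)\}$ and, after translation, $\{W^-(n)+R\}$). Now introduce a \emph{horizontal} parameter $t$: by shift-equivariance, for every $t\ge 0$ the interval $[0,R+t]$ simultaneously $\phi^n$-codes both $[W^+(n),\,W^+(n)+t]$ and $[W^-(n)+R,\,W^-(n)+R+t]$. For $t$ large these two intervals overlap, and their union $[W^+(n),\,W^-(n)+R+t]$ has length $R+t+1 + \bigl(W^-(n)-W^+(n)\bigr)$, strictly greater than the length $R+t+1$ of the source interval. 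Surjectivity of $\phi^n$ then gives
\[
P_X\bigl(R+t+1\bigr) \;\ge\; P_X\bigl(R+t+1 + (W^-(n)-W^+(n))\bigr),
\]
hence equality by monotonicity of $P_X$, and this holds for \emph{all} large $t$. Iterating in $t$ (not in powers of $\phi$) forces $P_X$ to be eventually constant, contradicting $|X|=\infty$. The point you were missing is that the two one-sided codings, once combined on a common source interval stretched by $t$, produce an \emph{overlapping} pair of target intervals whose union outgrows the source by a fixed positive amount, independent of $t$.
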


\begin{proof}
Since 
$$W^+(n, (\sigma^k\phi)) = W^+(1,\sigma^{nk}\phi^n) = W^+(1, \phi^n) +nk
= W^+(n, \phi) +nk,$$ 
property~\eqref{item:one} follows. Since
\[
\lim_{n \to \infty}\frac{W^+(mn, \phi)}{n}
= m \lim_{n \to \infty} \frac{W^+(mn, \phi)}{mn} = m \alpha^+(\phi), 
\]
property~\eqref{item:two} follows.

To show~\eqref{item:three},  we observe that it
suffices to show $W^+(n, \phi) \ge W^-(n, \phi)$ for all $n >0$.
For the purpose of contradiction we assume that there exists $n>0$
with  $W^-(n) > W^+(n)$.
By Proposition~\ref{theta-cont}, $\Theta_n^+$ is continuous and so 
there exists $R>0$ such that the interval $[0,R]$ $\phi^n$-codes the entry at $W^+(n)$.
Therefore for all $t \ge 0$ the interval $[0,R+t]$ $\phi^n$-codes the interval
$[W^+(n),W^+(n)+t]$. Clearly $R$ could be replaced by any larger value and
this still holds.
Similarly, there exists $R^{\prime}>0$ such
that the interval $[-R^{\prime},0]$ $\phi^n$-codes the entry at $W^-(n)$ and so by translating, 
$[0, R']$\ $\phi^n$-codes the entry $W^-(n) + R^{\prime}$. Just as with $R$, the value 
$R^{\prime}$ can be replaced by any larger value and hence we can assume that $R = R^{\prime}$.
Then the interval $[0,R]$ $\phi^n$-codes the entry at  $W^-(n) + R$ and therefore 
for $t\ge 0$, the interval $[0,R+t]$ $\phi^n$-codes 
the interval $[W^-(n) + R, W^-(n) + R +t]$.

Thus for $t\ge 0$, the interval $[0,R+t]$ $\phi^n$-codes both the interval
$[W^+(n),W^+(n)+t]$ and the interval $[W^-(n)+R, W^-(n)+R+t]$.
Increasing $R$ if necessary, we can assume that $W^-(n)+R \ge W^+(n)$.
Note that for any $t > W^-(n)-W^+(n)+R$, we have $W^-(n) + R < W^+(n) +t$.  Thus 
the two intervals $[W^+(n),W^+(n)+t]$ and $[W^-(n)+R, W^-(n)+R+t]$
overlap and their union is the interval $[W^+(n), W^-(n)+R+t]$. 
Thus for any sufficiently large $t$, the interval $[0,R+t]$, which
has length $R+t+1$,\ $\phi^n$-codes
the interval $[W^+(n), W^-(n)+R+t]$ with length
$W^-(n)-W^+(n)+R+t+1$, which is greater than $R+t+1$.  

This implies that 
$$P_X(R+t+1)\geq P_X(R+t+1+W^-(n)-W^+(n)),$$ 
since every word of length
$R+t+1+W^-(n)-W^+(n)$ is $\phi^n$-determined by some word of length
$R+t+1$.  Since $P_X$ is a nondecreasing function, we have
\begin{equation}\label{P_X eqn}
P_X(R+t+1)=P_X(R+t+1+W^-(n)-W^+(n))
\end{equation}
for all sufficiently large $t$.  Choose $t_0$ large enough such that
this equation holds when $t = t_0$ and define $t_k = t_{k-1} +  W^-(n)-W^+(n)$ for
$k \ge 1$.  Then by Equation~\eqref{P_X eqn}
\[
P_X(R+t_{k-1}+1) = P_X(R+t_{k-1}+1+W^-(n)-W^+(n))
= P_X(R+t_{k}+1).
\]
So by induction on $k$
\[
P_X(R+t_{k}+1) = P_X(R+t_{0}+1).
\]
Therefore the function $P_X$ is bounded above by $P_X(R+t_{0}+1)$.
It follows that for any $m >0$, there are at most 
$P_X(R+t_{0}+1)$ allowable colorings of the interval $[-m, m]$.
This contradicts our standing assumption that $X$ is infinite
and establishes~\eqref{item:three}.

To prove~\eqref{item:four} we note that
\begin{align*}
W^+(n, \phi \psi) &= W^+(1, (\phi \psi)^n)
= W^+(1, \phi^n \psi^n) \\
&\le W^+(1, \phi^n ) + W^+(1, \psi^n)  = W^+(n, \phi ) + W^+(n, \psi).
\end{align*}
Hence,
\[
\lim_{n \to \infty} \frac{W^+(n, \phi \psi)}{n} \le
\lim_{n \to \infty} \frac{W^+(n, \phi)}{n} + 
\lim_{n \to \infty} \frac{W^+(n, \psi)}{n}
\]
giving the inequality of item~\eqref{item:four}. The result for $\alpha^-$
is similar.

Item ~\eqref{item:five}  follows immediately from ~\eqref{item:four} if we replace
$\psi$ with $\phi^{-1}$, since $\alpha^+(id)  = \alpha^-(id) = 0$.
\end{proof}

Other than the restriction that $\alpha^-(\phi)\leq \alpha^+(\phi)$, any rational values can be taken on for some automorphism of the full shift: 
\begin{ex}\label{ex: cone}
We show that given rationals $r_1 \le r_2$, there is a full shift $(X,\sigma)$ with an automorphism $\phi$
such that $\alpha^-(\phi) = r_1$ and $\alpha^+(\phi) = r_2$. 

Suppose that $r_2 = p_2/q_2 \ge 0$.
Consider $X_2$ the Cartesian product of $q_2$ copies of the full two shift 
$\sigma\colon  \{0,1\}^\Z \to \{0,1\}^\Z$.
Define an automorphism $\phi_0$ by having it cyclically permuting the copies of
$\{0,1\}^\Z$ and perform a shift on one of them.  Then $\phi_0^{q_2} = 
\sigma_2\colon X_2 \to X_2$ is the shift (indeed a full shift on an alphabet of size $2^{q_2}$).  
Since $\alpha^+(\sigma_2) = \alpha^-(\sigma_2) = 1$, 
it follows from parts~\eqref{item:one} and~\eqref{item:two} of Proposition~\ref{alpha prop}
that $\alpha^+(\phi_0) = \alpha^-(\phi_0) = 1/q_2$. Setting $\phi_2 = \phi_0^{p_2}$, we have that 
$\alpha^+(\phi_2) = p_2\alpha^+(\phi_2) = p_2/q_2 =r_2$.  Similarly
$\alpha^-(\phi_2) = r_2$. 
If $r_2 = -p_2/q_2 < 0$  we
can do the same construction, defining $\phi_0$ to cyclically permute the copies of
$\Sigma_2$ but use the inverse shift  (instead of the shift) on one of the copies.  
Then $\phi^{q_2} = \sigma_2^{-1}\colon X_2 \to X_2$. In this way we still construct $\phi_2$
with $\alpha^+(\phi_2) = \alpha^-(\phi_2) = r_2$.  

By the same argument we can construct an
automorphism $\phi_1$ of $(X_1, \sigma_1)$ such that $\alpha^+(\phi_1)
= \alpha^-(\phi_1) = r_1$.  
Taking $X$ to be the Cartesian product $X_1 \times X_2$
and considering the (full) shift $\sigma = \sigma_1 \times \sigma_2\colon  X \to X$, 
and the automorphism $\phi = \phi_1 \times \phi_2$, it is straightforward
to check that $\alpha^+(\phi) =  \alpha^+(\phi_2) = r_2$ and
$\alpha^-(\phi) =  \alpha^-(\phi_1) = r_1$.
\end{ex}

In light of the work  on Lyapunov exponents for cellular automata, it is natural to ask 
for a general shift $\sigma$  and $\phi \in Aut(X, \sigma)$ which conditions on $\phi$ and/or $\sigma$
 suffice for the existence of a $\sigma$-invariant $\phi$-ergodic measure $\mu$
such that $\alpha^\pm(\phi)$  are Lyapunov exponents in the sense defined by~\cite{sherevesky, tisseur}.

\subsection{Two dimensional coding}

\begin{remark} 
We thank Samuel Petite for suggesting the short proof of the following lemma (in an earlier version of this paper we had a longer proof of this lemma). 
\end{remark} 

\begin{lemma} \label{lem: bounded range}
Let $\varphi\in\End(X)$ and suppose that there exists $K$ such that $\rr(\varphi^n)\leq K$ for infinitely many $n$.  Then $\varphi$ has finite order. 
\end{lemma}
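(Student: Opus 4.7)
The plan is to combine a pigeonhole argument on the finite set of block codes of bounded range with the surjectivity of endomorphisms.

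First I would observe that for fixed $K$ and finite alphabet $\Sigma$, the collection of sliding block codes $X \to X$ of range at most $K$ is finite. Indeed, any such code is determined by a function $\Sigma^{2K+1} \to \Sigma$, and there are only $|\Sigma|^{|\Sigma|^{2K+1}}$ such functions. In particular the set $\{\psi \in \End(X) \colon \rr(\psi) \le K\}$ is finite.

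Given that $\rr(\varphi^n) \le K$ for infinitely many $n$, the pigeonhole principle yields integers $m < n$ (both lying in this infinite set) with $\varphi^m = \varphi^n$ as endomorphisms of $X$. Setting $k = n - m > 0$ we obtain $\varphi^{m+k} = \varphi^m$, i.e.\ $\varphi^k \circ \varphi^m = \varphi^m$.

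Now I invoke surjectivity. By Definition~\ref{def: endo}, an endomorphism is a continuous \emph{surjection}, so $\varphi^m$ maps $X$ onto $X$. For any $y \in X$, choose $x \in X$ with $\varphi^m(x) = y$; then
\[
\varphi^k(y) = \varphi^k(\varphi^m(x)) = \varphi^{m+k}(x) = \varphi^m(x) = y.
\]
Hence $\varphi^k = \id_X$, and $\varphi$ has finite order, as claimed. The only step that requires any real input is the finiteness of the set of block codes of bounded range; everything else is immediate from the semigroup structure plus the fact that endomorphisms are surjective.
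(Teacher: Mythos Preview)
Your proof is correct and follows essentially the same approach as the paper: pigeonhole on the finite set of block codes of range at most $K$, then cancel using surjectivity of $\varphi^m$. The paper's version is slightly terser (it simply asserts that $\varphi^{n-m}$ is the identity without spelling out the surjectivity step), but the argument is identical.
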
 
\begin{proof} 
There are only finitely many block maps of range $\leq K$ and so, by the pigeonhole principle, there exist $0<m<n$ such that $\varphi^m=\varphi^n$.  It follows that $\varphi^{n-m}$ is the identity. 
\end{proof}

Recall that the interval $\cI(n, \phi)$  is defined in equation~\eqref{def:I}
to be $\{ i \in \Z \colon (i, n) \in \C(\phi)\}$.
Thus for $n\in\N$, we have 
$|\cI(-n, \phi)|=W^+(n, \phi)-W^-(n,\phi) +1$, is the width of the
  $n^{th}$ level of the light cone for $\phi$.

\begin{lemma}\label{interval-code}
Suppose $\phi$ is an endomorphism of the shift $(X,\sigma)$ and $n \ge 0$.
If  $J$ is any interval in $\Z$ which  $\phi^n$-codes $\{0\}$,
then $J \supset \cI({-n, \phi})  $.
\end{lemma}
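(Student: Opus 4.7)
The plan is to unwind the definition of $\cI(-n,\phi)$ in terms of $W^\pm$ and then use translation invariance together with the extremal property defining $W^\pm(n,\phi)$.

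First, I would observe that $\cI(-n,\phi) = [-W^+(n,\phi),\,-W^-(n,\phi)]$. Indeed, by definition $(i,-n) \in \C(\phi) = \C_f(\phi) \cup \C_p(\phi)$, and since $\C_p(\phi) = -\C_f(\phi)$, this holds iff $(-i,n) \in \C_f(\phi)$, iff $W^-(n,\phi) \le -i \le W^+(n,\phi)$, which rearranges to $-W^+(n,\phi) \le i \le -W^-(n,\phi)$.

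Now let $J = [a,b]$ be an interval that $\phi^n$-codes $\{0\}$. I will show $a \le -W^+(n,\phi)$ and $b \ge -W^-(n,\phi)$ separately. For the first inequality, note that enlarging a coding set preserves coding, so $[a,\infty)$ also $\phi^n$-codes $\{0\}$. Because $\phi$ commutes with $\sigma$, coding is translation equivariant: if $A$ $\phi^n$-codes $B$, then $A+k$ $\phi^n$-codes $B+k$ for every $k \in \Z$. Applying this shift by $-a$, the ray $[0,\infty)$ $\phi^n$-codes $\{-a\}$. By the minimality in the definition of $W^+(n,\phi)$, namely that $[W^+(n,\phi),\infty)$ is the largest ray $\phi^n$-coded by $[0,\infty)$, it follows that $-a \ge W^+(n,\phi)$, i.e.\ $a \le -W^+(n,\phi)$.

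For the second inequality I argue symmetrically: $(-\infty,b]$ $\phi^n$-codes $\{0\}$, so shifting by $-b$ we find that $(-\infty,0]$ $\phi^n$-codes $\{-b\}$, and by the corresponding maximality in the definition of $W^-(n,\phi)$ we obtain $-b \le W^-(n,\phi)$, i.e.\ $b \ge -W^-(n,\phi)$. Combining, $J \supset [-W^+(n,\phi),\,-W^-(n,\phi)] = \cI(-n,\phi)$, as desired.

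There is no real obstacle here; the only point requiring a bit of care is the translation equivariance of $\phi^n$-coding, which must be spelled out via $\phi^n \sigma^k = \sigma^k \phi^n$ so that one can legitimately shift the coding relation by $-a$ and $-b$. Once that is in hand, the extremal characterizations of $W^\pm(n,\phi)$ immediately yield both endpoint bounds.
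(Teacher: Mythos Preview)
Your proof is correct and follows essentially the same route as the paper: enlarge $J=[a,b]$ to a ray, translate, and invoke the extremal definition of $W^\pm(n,\phi)$. The only small wrinkle is that from ``$[0,\infty)$ $\phi^n$-codes $\{-a\}$'' you jump to $-a \ge W^+(n,\phi)$, whereas the definition speaks of the largest \emph{ray} coded; this is fine once you note (using the same translation equivariance you already invoke) that the set of points $\phi^n$-coded by $[0,\infty)$ is upward closed, so coding $\{-a\}$ forces coding of $[-a,\infty)$---which is exactly how the paper phrases it.
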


\begin{proof}
If the interval $J = [a,b]$ $\phi^n$-codes $\{0\}$, 
then $[a,\infty)$
 $\phi^n$-codes $[0, \infty)$,  and so $[0,\infty)$ 
$\phi^n$-codes $[-a, \infty)$.  It follows that
$-a \ge W^+(n, \phi)$ and hence $a \le -W^+(n, \phi)$.
Similarly $b \ge -W^-(n, \phi)$ and so $\cI(-n, \phi) \subset [a,b]$.
\end{proof}

\begin{lemma}
\label{lem: interval-code2}
Assume $\phi$ is an endomorphism
of a shift of finite type $(X,\sigma)$ and suppose that 
\[
\lim_{n \to \infty} \big |\cI(-n, {\phi}) \big | = +\infty.
\]
Then there is
$n_0$ such that whenever $n \ge n_0$, the interval
$\cI(-n, \phi)$\  $\phi^n$-codes $\{0\}$. Moreover,  
if $\sigma$ is a full shift we can take $n_0$ to be $0$ and
the hypothesis $\displaystyle \lim_{n \to \infty}  |\cI(-n, {\phi}) | = \infty$ is unnecessary.
\end{lemma}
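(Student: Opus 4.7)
The plan is to reduce the statement to the two ray-coding properties underlying the definitions of $W^+(n,\phi)$ and $W^-(n,\phi)$. Set $a := -W^+(n,\phi)$ and $b := -W^-(n,\phi)$, so $\cI(-n,\phi) = [a,b]$. By translating the defining properties of $W^\pm(n,\phi)$ through $\sigma^a$ and $\sigma^b$ respectively (using that $\phi^n$ commutes with $\sigma$), the ray $[a,\infty)$ $\phi^n$-codes $[0,\infty)$ and the ray $(-\infty, b]$ $\phi^n$-codes $(-\infty, 0]$; in particular each of them $\phi^n$-codes $\{0\}$. Hence if $x,y\in X$ agree on $[a,b]$ and we can produce a single $z\in X$ satisfying $z|_{(-\infty,b]} = x|_{(-\infty,b]}$ and $z|_{[a,\infty)} = y|_{[a,\infty)}$ (which is consistent on the overlap since $x=y$ there), then $\phi^n(x)[0] = \phi^n(z)[0] = \phi^n(y)[0]$, establishing that $\cI(-n,\phi)$\  $\phi^n$-codes $\{0\}$.

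To build such a $z$ I would use Bowen's Proposition~\ref{bowen prop}: let $n_1$ be the associated constant for $(X,\sigma)$, and, using the hypothesis $\lim_n|\cI(-n,\phi)| = +\infty$, choose $n_0$ so that $|w| := b-a+1 \ge n_1$ whenever $n\ge n_0$, where $w := x[a,b] = y[a,b]$. Then for every sufficiently large $N$, both $x[-N,a-1]\cdot w$ and $w\cdot y[b+1,N]$ lie in $\CL(X)$, so Proposition~\ref{bowen prop} yields
\[
z_N := x[-N,a-1]\cdot w \cdot y[b+1,N] \in \CL(X).
\]
Extend each $z_N$ to a point $\tilde z_N \in X$ and pass to a convergent subsequence by compactness of $X$; the limit $z \in X$ satisfies $z|_{(-\infty,b]} = x|_{(-\infty,b]}$ and $z|_{[a,\infty)} = y|_{[a,\infty)}$ by construction, as needed.

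For the full shift case, every finite word over $\Sigma$ is legal and Bowen's proposition holds trivially with $n_1 = 0$, so the preceding gluing succeeds for every $n\ge 1$ with no constraint on $|w|$, making the growth hypothesis unnecessary; the case $n=0$ is immediate since $\phi^0 = \id$ and $\cI(0,\phi) = \{0\}$. The one delicate point in this plan is bookkeeping around the transition from finite Bowen-gluings to a global limit $z \in X$, but nothing beyond a standard compactness argument is required; everything else is a direct application of the ray-coding definitions of $W^\pm(n,\phi)$.
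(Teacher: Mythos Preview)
Your proposal is correct and follows essentially the same route as the paper. Both arguments use Bowen's property to glue $x$ on the left and $y$ on the right across the common word $w=x[a,b]=y[a,b]$, producing a single $z\in X$, and then invoke the ray-coding definitions of $W^\pm(n,\phi)$ to conclude $\phi^n(x)[0]=\phi^n(z)[0]=\phi^n(y)[0]$; the only difference is that the paper asserts the infinite gluing $w_1^-ww_2^+\in X$ directly from Proposition~\ref{bowen prop}, while you spell out the passage from finite gluings to the bi-infinite $z$ via compactness.
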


In slightly more generality,  if $(X, \sigma)$ is a $1$-step shift of finite type, then we can take $n_0$ to be $0$.
 
\begin{proof}  
Suppose  that $\phi$ is an endomorphism and $\sigma$ is a subshift of finite
type.  Then by Proposition~\ref{bowen prop}, 
there exists $m_0\geq 0$ such that if $w$ is a word of length at least $m_0$ 
and if $w_1^- w w_1^+$ and $w_2^- w w_2^+$ are  elements of $X$ for
some semi-infinite words $w_i^\pm$, then both
$w_1^- w w_2^+$ and $w_2^- w w_1^+$ are  elements of $X$.
Clearly $m_0 =0$ suffices if $\sigma$ is a full shift.

By hypothesis, 
\[
\lim_{n \to \infty} \big | \cI(-n, \phi) \big |
= \lim_{n \to \infty} \big |W^+(n, \phi) -W^-(n, \phi) \big | +1  = +\infty,
\]
and so we can choose $n_0$ such
that the length of $\cI(-n, \phi)$ is greater than $m_0$ when $n \ge n_0$.
Suppose  $n \ge n_0$ and that $x,y \in X$ agree 
on the interval $\cI(-n, \phi)$.  
We show that $\phi^n(x)[0] = \phi^n(y)[0]$.  
Let $w = x[-W^+(n,\phi), -W^-(n,\phi)] =
y[-W^+(n,\phi), -W^-(n,\phi)]$ and define $w_i^\pm$ by
$x(-\infty , \infty) = w_1^- w w_1^+$ and
$y(-\infty , \infty) = w_2^- w w_2^+$.  Then $w_1^- w w_2^+$ 
is an element of $X$ satisfying $x[i] =  w_1^- w w_2^+[i]$  
for all $i \le -W^-(n,\phi)$ and
$y[i] =  w_1^- w w_2^+[i]$   for all $i \ge -W^+(n,\phi)$.  It follows that
$\phi^n(x)[0] = \phi^n(w_1^- w w_2^+)[0]$
and that $\phi^n(y)[0] = \phi^n(w_1^- w w_2^+)[0]$.
Hence $\phi^n(x)[0] = \phi^n(y)[0]$ and $\{0\}$ is $\phi^n$-coded by $[-W^+(n,\phi), -W^-(n,\phi)].$

\end{proof}
\begin{definition} 
  Let $X$ be a subshift and let $\phi\in\End(X,\sigma)$.  Define
  $r(n, \phi)$ to be the minimal width of an interval which
  $\phi^n$-codes $\{0\}$.
\end{definition} 

\begin{lemma}\label{bound} 
Suppose $(X, \sigma)$ is a subshift of finite type and $\phi \in \End(X,\sigma)$.
Then there is a constant
$C(\phi)$ such that
$|\cI(-n, \phi)|\leq r(n, \phi) \leq |\cI(-n, \phi)|+C(\phi)$.  If $X$ is
a full shift we can take $C(\phi) = 0$.
\end{lemma}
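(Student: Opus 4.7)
The plan is to establish the two inequalities separately. The lower bound $|\cI(-n,\phi)| \le r(n,\phi)$ is immediate from Lemma~\ref{interval-code}: any interval which $\phi^n$-codes $\{0\}$ must contain $\cI(-n,\phi)$, so the minimum-width such interval automatically has width at least $|\cI(-n,\phi)|$.

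For the upper bound, I will show that $C(\phi)$ can be taken to be the Bowen constant $m_0$ from Proposition~\ref{bowen prop} (so in fact it depends only on $X$). The key observation to set up is purely symbolic: since $[0,\infty)$ $\phi^n$-codes $[W^+(n,\phi),\infty)$, translating gives that for any $a \le -W^+(n,\phi)$ the ray $[a,\infty)$ $\phi^n$-codes $[a+W^+(n,\phi),\infty) \ni 0$; symmetrically, for any $b \ge -W^-(n,\phi)$ the ray $(-\infty,b]$ $\phi^n$-codes $(-\infty,b+W^-(n,\phi)] \ni 0$. Thus if $J=[a,b] \supset \cI(-n,\phi) = [-W^+(n,\phi),-W^-(n,\phi)]$, each of the two half-rays containing $J$ already $\phi^n$-codes $\{0\}$.

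I will then choose $J$ to be an interval containing $\cI(-n,\phi)$ of length $\max\bigl(|\cI(-n,\phi)|,\,m_0\bigr)$ (extending $\cI(-n,\phi)$ on either side if needed). Given $x,y \in X$ agreeing on $J$, their common word $w = x|_J = y|_J$ has $|w| \ge m_0$, so Proposition~\ref{bowen prop} lets us splice $z := x|_{(-\infty,a-1]}\,w\,y|_{[b+1,\infty)} \in X$. Then $z$ and $x$ agree on $(-\infty,b]$ (which $\phi^n$-codes $\{0\}$), so $\phi^n(z)[0]=\phi^n(x)[0]$, and similarly $z$ and $y$ agree on $[a,\infty)$ (which $\phi^n$-codes $\{0\}$), so $\phi^n(z)[0]=\phi^n(y)[0]$. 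Hence $\phi^n(x)[0]=\phi^n(y)[0]$, i.e.\ $J$ $\phi^n$-codes $\{0\}$, which gives $r(n,\phi) \le |J| \le |\cI(-n,\phi)| + m_0$. For a full shift, the Bowen constant is $m_0=0$ (arbitrary splicing is always admissible), so $J=\cI(-n,\phi)$ itself works and $C(\phi)=0$.

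The only subtle point is bookkeeping with the inequalities so that the endpoint conditions on $J$ translate directly into the half-ray coding of $\{0\}$; once that is in place the argument is a short application of Bowen's characterization of subshifts of finite type together with the definitions of $W^\pm(n,\phi)$. There is no genuine obstacle, since we do not need the hypothesis $|\cI(-n,\phi)|\to\infty$ that appears in Lemma~\ref{lem: interval-code2}: the trivial extension from $|\cI(-n,\phi)|$ up to $m_0$ (when the former is small) costs at most $m_0$ in width.
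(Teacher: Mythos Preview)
Your proof is correct and follows essentially the same approach as the paper: both obtain the lower bound from Lemma~\ref{interval-code} and the upper bound by a splicing argument using Bowen's characterization (Proposition~\ref{bowen prop}), showing that an interval of width $|\cI(-n,\phi)|+m_0$ containing $\cI(-n,\phi)$ already $\phi^n$-codes $\{0\}$. The only cosmetic difference is that the paper phrases the upper bound by contradiction (assuming no such $C$ works and deriving a contradiction for $C>n_0$), whereas you argue directly; your version is in fact slightly cleaner and yields the sharper bound $r(n,\phi)\le\max(|\cI(-n,\phi)|,m_0)$ along the way.
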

 
In slightly more generality,  if $(X, \sigma)$ is a $k$-step shift of finite type, then $C(\phi)$ can be taken to be $k-1$.  
 
\begin{proof} 
The first inequality follows immediately from Lemma~\ref{interval-code}.  We
prove the second inequality by contradiction.  Thus suppose that 
for any $C$,  there exist infinitely many
$n\in\N$ and points $x_{C,n}\neq y_{C,n}$ which agree on the interval
$[-W^+(n, \phi),-W^-(n,\phi)+C]$ but are such that
$\phi^n(x_{C,n})[0]\neq\phi^n(y_{C,n})[0]$.

Recall from Proposition~\ref{bowen prop} that there exists a constant
$n_0$ (depending on the subshift $X$) such that if $x,y\in X$ agree
for $n_0$ consecutive places, say $x[i]=y[i]$ for all $p\leq i<p+n_0$, then
the $\Z$-coloring whose restriction to $(-\infty,p+n_0-1]$ coincides
with that of $x$ and whose restriction to $[p+n_0,\infty)$ coincides
with that of $y$, is an element of $X$. 

Choose $C > n_0$.
By assumption, there exist infinitely many $n\in\N$ and points
$x_{n}, y_{n}\in X$ which agree on
$[-W^+(n, \phi),-W^-(n, \phi)+C]$ but are such that
$\phi^n(x_{n})[0]\neq\phi^n(y_{n})[0]$. Let $z\in X$ be the
$\Z$-coloring whose restriction to $(-\infty,-W^-(n, \phi)+C]$
coincides with $x_{n}$ and whose restriction to
$[-W^-(n, \phi)+C+1,\infty)$ coincides with $y_{n}$.  
Then since $C> n_0$, we have that $z\in X$. 
 Since $z$ agrees with $y_{n}$ on
$[-W^+(n, \phi),\infty)$, it follows that $(\phi^nz)[0]=(\phi^ny_{n})[0]$.  
On the other hand,
$(\phi^nz)[0]=(\phi^nx_{n})[0]$,  since $z$ agrees with $x_{n}$ on
$(-\infty,-W^-(n,\phi)]$.  But this contradicts the fact that
$(\phi^nx_{n})[0]\neq(\phi^ny_{n})[0]$, and so $C(\phi)$ exists.
\end{proof} 

\begin{proposition} \label{prop: finite order}
Suppose $X$ is a subshift of finite type and $\phi\in\End(X,\sigma)$.  If 
$$ 
\liminf_{n\to\infty}|\cI(-n, \phi)|<\infty, 
$$ 
then $\phi$ has finite order in $\End(X,\sigma)/\langle\sigma\rangle$. 
\end{proposition}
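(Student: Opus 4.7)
The plan is to combine Lemma~\ref{bound} with the pigeonhole argument of Lemma~\ref{lem: bounded range}, using powers of $\sigma$ to move each minimal coding interval into a fixed window around $0$.

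First, I would unpack the hypothesis. Since $\liminf_{n\to\infty}|\cI(-n,\phi)|<\infty$, there is a constant $K$ and an infinite set $S\subset\N$ such that $|\cI(-n,\phi)|\le K$ for all $n\in S$. By Lemma~\ref{bound}, for each $n\in S$ we have $r(n,\phi)\le K+C(\phi)$; write $K':=K+C(\phi)$. Thus for every $n\in S$ there is an interval $[a_n,b_n]\subset\Z$ of width $b_n-a_n+1\le K'$ which $\phi^n$-codes $\{0\}$.

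Next, I would translate each of these intervals to a common bounded window. Because $\sigma^k\phi^n$ is itself a sliding block code and $(\sigma^k\phi^n)(x)[0]=\phi^n(x)[k]$, if $[a_n,b_n]$ $\phi^n$-codes $\{0\}$, then $[a_n-k,b_n-k]$ $(\sigma^k\phi^n)$-codes $\{0\}$. Choosing $k_n\in\Z$ so that $[a_n-k_n,b_n-k_n]\subset[-K',K']$, we obtain $\rr(\sigma^{k_n}\phi^n)\le K'$ for every $n\in S$.

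Now the pigeonhole step from Lemma~\ref{lem: bounded range} applies directly: there are only finitely many block maps $X\to X$ of range at most $K'$, so among the infinitely many endomorphisms $\{\sigma^{k_n}\phi^n\}_{n\in S}$ two must coincide. Choosing $m<n$ in $S$ with $\sigma^{k_m}\phi^m=\sigma^{k_n}\phi^n$ and rearranging yields $\phi^{n-m}=\sigma^{k_m-k_n}$, so $\phi^{n-m}\in\langle\sigma\rangle$ and $\phi$ has finite order in $\End(X,\sigma)/\langle\sigma\rangle$.

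The only place real care is required is the translation step: one must verify that translating a minimal coding interval by an integer really produces a coding interval for $\sigma^{k}\phi^n$ with a matching bound on its range, and this is where the standing finite-type hypothesis is used only indirectly through Lemma~\ref{bound}. Everything else is bookkeeping, and the crux of the argument is simply that bounded \emph{width} of coding intervals (rather than bounded \emph{range} of $\phi^n$) already forces finiteness modulo the shift, because powers of $\sigma$ can absorb the drift of the intervals $\cI(-n,\phi)$.
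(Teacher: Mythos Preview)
Your argument is correct and is essentially the paper's own proof: use Lemma~\ref{bound} to bound $r(n,\phi)$ along a subsequence, shift by a power of $\sigma$ to place the coding interval in a fixed window, then pigeonhole among finitely many block maps and cancel using surjectivity of $\phi^m$. One small sign slip: from $(\sigma^k\phi^n)(x)[0]=\phi^n(x)[k]$ it follows that $[a_n+k,b_n+k]$ (not $[a_n-k,b_n-k]$) $(\sigma^k\phi^n)$-codes $\{0\}$, but since you are free to choose $k_n$ this does not affect the argument.
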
 

\begin{proof} 
By hypothesis, there exists $M$ such that $|\cI(-n,\phi)|<M$ for infinitely many $n$.  By Lemma~\ref{bound}, there is a constant $C(\phi)$ such that $r(n, \phi)<M+C(\phi)$ for infinitely many $n$.  Let $n_1<n_2<\cdots$ be a subsequence along which $r(n_i,\phi)<M+C(\phi)$ is constant; define this constant to be $R$.  Then for each $i=1,2,\dots$ there is an interval $[a_i,b_i]$ of length $R$ which $\phi^{n_i}$-codes $\{0\}$.  Therefore the interval $[0,R]$ $(\sigma^{-a_i}\phi^{n_i})$-codes $\{0\}$ for all $i$.  It follows that $\sigma^{-a_i}\phi^{n_i}$ is a block map of range $R$ for all $i$.  There are only finitely many block maps of range $R$, so there must exist $i_1<i_2$ such that $\sigma^{-a_{i_1}}\phi^{n_{i_1}}=\sigma^{-a_{i_2}}\phi^{n_{i_2}}$ or simply 
$$ 
\phi^{n_{i_1}}(x)=\sigma^{a_{i_1}-a_{i_2}}\phi^{n_{i_2}}(x)=\sigma^{a_{i_1}-a_{i_2}}\phi^{n_{i_2}-n_{i_1}}\left(\phi^{n_{i_1}}(x)\right) 
$$ 
for all $x\in X$.  Since $\phi^{n_{i_1}}$ is a surjection, we have 
$$ 
y=\sigma^{a_{i_1}-a_{i_2}}\phi^{n_{i_2}-n_{i_1}}(y) 
$$ 
for all $y\in X$.  In other words, $\phi^{n_{i_2}-n_{i_1}}=\sigma^{a_{i_2}-a_{i_1}}$. 
\end{proof} 

\begin{thm}
\label{thm: interval-code}
Assume that $\phi$ is an endomorphism
of a shift of finite type $(X,\sigma)$ and that 
$\phi$ has infinite order in $\End(X)/\langle \sigma \rangle.$
Then there exists $n_0$ such that whenever $n \ge n_0$, the interval
$\cI(-n, \phi)$  $\phi^n$-codes $\{0\}$. 
If $\sigma$ is a full shift, we can take $n_0$ to be $0$.
\end{thm}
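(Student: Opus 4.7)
The plan is to deduce Theorem~\ref{thm: interval-code} by combining Proposition~\ref{prop: finite order} (which handles the hypothesis on the order of $\phi$) with Lemma~\ref{lem: interval-code2} (which provides the coding conclusion once the light-cone levels grow).

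First I would argue by contrapositive of Proposition~\ref{prop: finite order}. Since $\phi$ has infinite order in $\End(X,\sigma)/\langle\sigma\rangle$, we cannot have $\liminf_{n\to\infty}|\cI(-n,\phi)|<\infty$. Therefore
\[
\liminf_{n\to\infty}|\cI(-n,\phi)| = +\infty.
\]
Because $\{|\cI(-n,\phi)|\}$ is a sequence of positive integers, a liminf equal to $+\infty$ forces the ordinary limit to be $+\infty$ as well, so
\[
\lim_{n\to\infty}|\cI(-n,\phi)| = +\infty.
\]
This is the precise hypothesis needed to invoke Lemma~\ref{lem: interval-code2}, which then yields an integer $n_0$ such that for every $n\ge n_0$, the interval $\cI(-n,\phi)$\ $\phi^n$-codes $\{0\}$.

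For the full shift case, no reduction via Proposition~\ref{prop: finite order} is needed: the second assertion of Lemma~\ref{lem: interval-code2} already states that if $\sigma$ is a full shift then one may take $n_0=0$, with no hypothesis on the growth of $|\cI(-n,\phi)|$ required, so this part is immediate.

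The only non-formal step is the passage from $\liminf = \infty$ to $\lim = \infty$, which is trivial for integer-valued sequences, so I do not anticipate any genuine obstacle; the content of the theorem is really packaged in the two preceding results, and this theorem is essentially a clean restatement that trades the divergence hypothesis for the more natural dynamical hypothesis of having infinite order modulo $\langle\sigma\rangle$.
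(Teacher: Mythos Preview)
Your proof is correct and follows essentially the same route as the paper: use the contrapositive of Proposition~\ref{prop: finite order} to obtain $\lim_{n\to\infty}|\cI(-n,\phi)|=+\infty$, then apply Lemma~\ref{lem: interval-code2}, and for the full shift invoke the second clause of that lemma directly. One minor remark: the implication $\liminf=+\infty\Rightarrow\lim=+\infty$ holds for any real sequence, so the restriction to integer values is unnecessary.
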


\begin{proof}
If $\sigma$ is a full shift, the result follows from Lemma~\ref{lem: interval-code2}.
Otherwise, since $\phi$
has infinite order in $\End(X)/\langle \sigma \rangle$, Proposition~\ref{prop: finite order}
tells us that
\[
\lim_{n\to\infty}|\cI(-n,\phi)| = +\infty.  
\]
Thus we can apply Lemma~\ref{lem: interval-code2} to conclude 
that $\cI(-n, \phi)$  $\phi^n$-codes $\{0\}$. 
\end{proof}

\begin{cor}\label{cor: I(n,phi)}
If $\phi$ has infinite order in $\End(X)/\langle \sigma \rangle$, then for $n$
sufficiently large, $\cI(-n, \phi)$ is the unique minimal interval
which $\phi^n$-codes $\{0\}$. 
\end{cor}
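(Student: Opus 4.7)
The plan is to combine the two results developed immediately above (Lemma~\ref{interval-code} and Theorem~\ref{thm: interval-code}) to pin down $\cI(-n,\phi)$ as both a coding interval and a lower bound on every coding interval.

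First I would invoke Theorem~\ref{thm: interval-code}: since $\phi$ has infinite order in $\End(X)/\langle \sigma\rangle$, there is an $n_0$ such that for all $n \ge n_0$ the interval $\cI(-n,\phi)$ itself $\phi^n$-codes $\{0\}$. This shows that $\cI(-n,\phi)$ is at least a candidate for a minimal coding interval.

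Next, to establish minimality, I would let $J \subset \Z$ be any interval that $\phi^n$-codes $\{0\}$ and apply Lemma~\ref{interval-code} directly: this forces $J \supset \cI(-n,\phi)$. Thus every coding interval contains $\cI(-n,\phi)$, so $\cI(-n,\phi)$ is in fact the (unique) minimum with respect to inclusion among all intervals that $\phi^n$-code $\{0\}$, and a fortiori the unique minimal one. Uniqueness of the minimum is automatic: any other minimal coding interval $J'$ would satisfy $J' \supset \cI(-n,\phi)$ while $\cI(-n,\phi)$ itself codes $\{0\}$, forcing $J' = \cI(-n,\phi)$ by minimality of $J'$.

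There is essentially no obstacle here beyond lining up the two previously proved facts; the hypothesis of infinite order is precisely what is needed to apply Theorem~\ref{thm: interval-code} (it is what guarantees $|\cI(-n,\phi)| \to \infty$ via Proposition~\ref{prop: finite order}), while Lemma~\ref{interval-code} is a general statement requiring no such hypothesis.
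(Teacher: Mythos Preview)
Your proposal is correct and follows essentially the same approach as the paper's own proof: invoke Theorem~\ref{thm: interval-code} to get that $\cI(-n,\phi)$ $\phi^n$-codes $\{0\}$ for large $n$, then use Lemma~\ref{interval-code} to obtain minimality and uniqueness. Your write-up simply spells out the uniqueness step in a bit more detail than the paper does.
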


\begin{proof} The fact that $\cI(-n, \phi)$\ $\phi^n$-codes $\{0\}$ for 
large $n$ follows from Theorem~\ref{thm: interval-code}.  Minimality
and uniqueness follow from Lemma~\ref{interval-code}.
\end{proof}

\begin{question} 
Is the hypothesis that $(X,\sigma)$ is an SFT necessary in Theorem~\ref{thm: interval-code}? 
\end{question} 

\section{The light cone and nonexpansive subspaces}
\label{sec:lightcone}

The main result of this section is Theorem~\ref{thm nonexpansive}: it states 
that the line $u = \alpha^+(\phi) v$ in the $u,v$-plane
is a nonexpansive subspace of $\R^2$ for the spacetime of $\phi$.  
The analogous statement holds in the other direction: 
the line $u = \alpha^-(\phi) v$ in the $u,v$-plane
is a nonexpansive subspace.

\subsection{The deviation function} 

We begin by investigating the properties of the function which measures
the deviation of $W^+(n,\phi)$ from $\alpha^+(\phi) n$.

\begin{definition}
Suppose $\phi \in \End(X, \sigma)$.
For $n \ge 0$ define the
positive and negative {\em deviation functions} $\delta^+(n) = \delta^+(n,\phi)$
and $\delta^-(n) = \delta^-(n,\phi)$
by $\delta^+(n) = W^+(n) - n \alpha^+(\phi)$
and $\delta^-(n) = W^-(n) - n \alpha^-(\phi)$
\end{definition}

\begin{lemma}\label{deviation lemma}
Suppose $\delta^+(n)$ and $\delta^-(n)$  are the deviation functions
associated to $\phi$.
Then 
\begin{enumerate}
\item
\label{it-dev:partone}
The functions $\delta^+(n)$ and $-\delta^-(n)$ are subadditive.
\item 
\label{it-dev:parttwo}
The deviation functions satisfy $\displaystyle \lim_{n\to \infty} \frac{\delta^+(n)}{n} = 0$ and
$\displaystyle \lim_{n\to \infty} \frac{\delta^-(n)}{n} = 0$.
\item 
\label{it-dev:partthree}
For all $n\geq 0$, we have $\delta^+(n) \ge 0$ and
$\delta^-(n) \le 0$.
\end{enumerate}
\end{lemma}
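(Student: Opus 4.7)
The plan is to derive all three items from the facts already established for $W^\pm(n,\phi)$, subtracting off the linear part $n\alpha^\pm(\phi)$ and invoking Fekete's Lemma together with the observation immediately following it in the text.

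First, for item~\eqref{it-dev:partone}, I would observe that by Lemma~\ref{lemma:subadd} the sequence $\{W^+(n,\phi)\}$ is subadditive and $\{-W^-(n,\phi)\}$ is subadditive. Since the function $n\mapsto n\alpha^+(\phi)$ is additive (hence both sub- and super-additive), subtracting it preserves subadditivity:
\[
\delta^+(m+n) = W^+(m+n)-(m+n)\alpha^+(\phi) \le W^+(m)+W^+(n)-(m+n)\alpha^+(\phi)=\delta^+(m)+\delta^+(n),
\]
and the analogous computation shows $-\delta^-(n)$ is subadditive.

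Next, for item~\eqref{it-dev:parttwo}, I would simply note that by Definition~\ref{alpha def} we have $\alpha^+(\phi)=\lim_{n\to\infty}W^+(n)/n$, so dividing the identity $\delta^+(n)=W^+(n)-n\alpha^+(\phi)$ by $n$ and taking the limit yields $\lim_{n\to\infty}\delta^+(n)/n=0$; the argument for $\delta^-$ is identical.

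Finally, for item~\eqref{it-dev:partthree}, I would apply the remark recorded just after Fekete's Lemma: a subadditive sequence whose normalized limit is $\ge 0$ must itself be nonnegative for $n\ge 1$. Combining items~\eqref{it-dev:partone} and~\eqref{it-dev:parttwo}, this gives $\delta^+(n)\ge 0$ and $-\delta^-(n)\ge 0$ for all $n\ge 1$. For the case $n=0$, one checks directly that $W^\pm(0,\phi)=0$ (since $\phi^0$ is the identity, which is coded by the identity map of range $0$), so $\delta^\pm(0)=0$, completing the proof. No step here is really hard; the only thing to be careful about is citing the correct sign conventions so that the subadditivity in item~\eqref{it-dev:partone} matches the hypothesis of Fekete's remark in item~\eqref{it-dev:partthree}.
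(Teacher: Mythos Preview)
Your proposal is correct and follows essentially the same approach as the paper's proof: subadditivity of $\delta^+$ comes from writing it as a subadditive function plus an additive one, the limit in part~\eqref{it-dev:parttwo} is computed directly from the definition of $\alpha^+(\phi)$, and part~\eqref{it-dev:partthree} follows from Fekete's Lemma via the remark that a subadditive sequence with nonnegative normalized limit is termwise nonnegative. Your explicit handling of the $n=0$ case is a small addition the paper omits, but otherwise the arguments are identical.
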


\begin{proof}
Since $\delta^+(n)$ is the sum of the subadditive function
$W^+(n) = W^+(\phi^n)$ and the additive function $-n \alpha$, 
part~\eqref{it-dev:partone} follows.  
Since
\[
\lim_{n\to \infty} \frac{\delta^+(n)}{n} = \lim_{n\to \infty} \frac{W^+(n) -n \alpha^+(\phi)}{n}
= \lim_{n\to \infty} \frac{W^+(n)}{n} - \alpha^+(\phi) =0, 
\]
part~\eqref{it-dev:parttwo} follows.

To see part~\eqref{it-dev:partthree},  observe that parts~\eqref{it-dev:partone} and~\eqref{it-dev:parttwo} together with Fekete's 
Lemma (Lemma~\ref{fekete}) imply
\[
\inf_{n \ge 1} \frac{\delta^+(n)}{n} = 0
\]
and so $\delta^+(n) < 0$ is impossible. The analogous results
for $\delta^-(n)$ are proved similarly.
\end{proof}

\begin{lemma}\label{spacetime lemma}
Let $\U$ be the $\phi$-spacetime of $(X,\sigma)$ for $\phi \in \End(X)$ and 
let $\alpha = \alpha^+(\phi)$ and $\delta(n) = \delta^+(n,\phi)$.
Suppose that $\alpha \ge 0$ and the deviation $\delta(n)$ is unbounded 
for $ n \ge 0$.
Then there exist two sequences $\{x_m\}_{m \ge 1}$ and $\{y_m\}_{m \ge 1}$  in $\U$
such that 
\begin{enumerate}
\item
\label{item:one1}
$x_m(i,j) =   y_m(i,j)$ for all $(i,j)$ with $-m \le j \le 0$ and $i \ge \alpha j$
\item
\label{item:two2}
$x_m(i,j) =   y_m(i,j)$ for all $(i,j)$ with $j\ge 0 $ and $i \ge (\alpha + \frac{1}{m}) j$
\item
\label{item:three3}
$x_m(-1,0) \ne y_m(-1,0)$ for all $m \in \N$.  
\end{enumerate}
The analogous result for $\alpha^-(\phi)$ and $\delta^-(n,\phi)$ also holds.
\end{lemma}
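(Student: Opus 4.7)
The plan is to exploit the unboundedness of $\delta$ to locate, for each $m\ge 1$, a time $n_m$ at which $\delta$ is locally extremal in a suitable variational sense, and then to transplant in the spacetime a pair of configurations witnessing the minimality of $W^+(n_m)$, so that the resulting disagreement sits precisely at $(-1,0)$.

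First I would carry out the selection of $n_m$. For each $m$, set $\Phi_m(n) := \delta(n) - n/m$ on $n\ge 0$. By Lemma~\ref{deviation lemma}, $\delta(n)\ge 0$ and $\delta(n)/n\to 0$, so $\Phi_m(n)\to -\infty$ and $\Phi_m$ attains its supremum on $\N$; let $n_m$ be the least maximizer. The defining inequality $\Phi_m(n_m+k)\le \Phi_m(n_m)$ rearranges to the single uniform bound
\[
\delta(n_m+k)\le \delta(n_m) + k/m\qquad\text{for every integer $k$ with $n_m+k\ge 0$.}
\]
Because $\delta$ is unbounded but bounded on each finite initial segment $\{0,\dots,N\}$, one checks that both $\delta(n_m)\to\infty$ and $n_m\to\infty$; in particular $n_m\ge m$ for all sufficiently large $m$.

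Next I would build $x_m,y_m$ by lifting and translating. The minimality in the definition of $W^+(n_m,\phi)$ furnishes $x,y\in X$ with $x[i]=y[i]$ for all $i\ge 0$ and $\phi^{n_m}(x)[W^+(n_m)-1]\neq \phi^{n_m}(y)[W^+(n_m)-1]$. Lift $x,y$ to elements $\tilde x,\tilde y\in\U$ by applying $\phi$ for rows at levels $\ge 0$ and choosing, by surjectivity of $\phi$, arbitrary $\phi$-preimage rows at levels $<0$. Then set
\[
x_m(i,j) := \tilde x(i+W^+(n_m),\,j+n_m),\qquad y_m(i,j) := \tilde y(i+W^+(n_m),\,j+n_m),
\]
which are $\Z^2$-translates of elements of $\U$, hence themselves in $\U$. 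Condition~\eqref{item:three3} is immediate. For~\eqref{item:one1} and~\eqref{item:two2}, since $x$ and $y$ agree on $[0,\infty)$ we have $\phi^k(x)$ and $\phi^k(y)$ agreeing on $[W^+(k),\infty)$ for every $k\ge 0$; so when $j+n_m\ge 0$, the equality $x_m(i,j)=y_m(i,j)$ follows from $i+W^+(n_m)\ge W^+(j+n_m)$, i.e.\ from
\[
i\ \ge\ \alpha j + \delta(j+n_m) - \delta(n_m).
\]
The variational bound controls the error: for $-m\le j\le 0$ (all with $j+n_m\ge 0$ since $n_m\ge m$), it is $\le j/m\le 0$, so $i\ge \alpha j$ suffices, giving~\eqref{item:one1}; for $j\ge 0$ it gives $i\ge (\alpha+1/m)j$ as sufficient, yielding~\eqref{item:two2}. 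The analogous $\alpha^-$ statement follows by the symmetric argument using $W^-$, $\delta^-$, and coding by $(-\infty,0]$, with the disagreement placed at $(1,0)$.

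The main obstacle is that the two agreement conditions impose seemingly incompatible demands on $\delta$: it must not exceed $\delta(n_m)$ on a past window of fixed length $m$, yet be allowed to grow at slope only $1/m$ indefinitely into the future. Packaging both demands into a single extremal property of $n_m$ by maximizing $\delta(n)-n/m$ is precisely the right device.
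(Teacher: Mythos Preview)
Your approach is essentially the paper's, streamlined: both arguments pick a time at which $\delta(n)-n/m$ is maximal and then translate a witnessing pair so that the disagreement lands at $(-1,0)$. Your packaging via $\Phi_m(n)=\delta(n)-n/m$ is cleaner than the paper's two-step choice (a record time $n_0>m$ followed by a maximizer $t_0\ge n_0$ of the same affine-adjusted quantity).

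There is, however, a genuine gap. From $n_m\to\infty$ you infer ``in particular $n_m\ge m$ for all sufficiently large $m$,'' and this is a non sequitur. For a subadditive, nonnegative, unbounded $\delta$ with $\delta(n)/n\to 0$ that grows very slowly (think $\delta(n)\sim\log\log n$), the maximizer of $\delta(n)-n/m$ is of order $m/\log m$, hence $<m$ for all large $m$. In that regime your verification of condition~(1) breaks down twice: the variational inequality $\delta(n_m+j)\le\delta(n_m)+j/m$ is only available for $j\ge -n_m$, and for $j<-n_m$ the rows of $\tilde x,\tilde y$ at height $j+n_m<0$ were chosen as \emph{arbitrary} $\phi$-preimages, so you have no control there at all.

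The fix is easy and stays within your framework. Your argument actually proves that $(x_m,y_m)$ agree on $\{-n_m\le j\le 0,\ i\ge\alpha j\}$ and on $\{j\ge 0,\ i\ge(\alpha+1/m)j\}$, and you do have $n_m\to\infty$. Since the conditions in the lemma are monotone in $m$, for each target index $M$ choose $m\ge M$ large enough that $n_m\ge M$ and set $(x_M,y_M):=(x_m,y_m)$. Alternatively, do what the paper does: first pick a record time $N_m>m$ with $\delta(N_m)>\delta(j)$ for $0\le j<N_m$ (possible since $\delta$ is unbounded), then maximize $\Phi_m$ over $n\ge N_m$; now $n_m\ge N_m>m$ by construction, and on $[0,N_m)$ the record property supplies $\delta(\cdot)\le\delta(N_m)\le\delta(n_m)$, which is exactly what you need for~(1).
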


\begin{proof}
For notational simplicity, denote  $W^+(n)$ by $W(n)$,
so $\delta(n)  = W(n) -n \alpha$.

We define a piecewise linear $F(t)$ from the set
$\{t\in\Z \colon  t \ge -m\}$ to $\Z$ and show $W(t) \le F(t)$ for all $t\geq -m$.
We then use this to define $x_m, y_m$ satisfying the three properties. 

Given $m \in \N$ and using the facts that
$\displaystyle \lim_{k \to \infty} \frac{W(k)}{k} = \alpha$
and $\displaystyle \lim_{k \to \infty} \frac{\delta(k)}{k} = 0$, 
we can  choose $n_0 = n_0(m) > m$ such that 
\[
\frac{\delta(k)}{k} <  \frac{1}{m}
\] for all $k >n_0$.
For the moment as $m$ is fixed we suppress the dependence of $n_0$ on $m$.
By hypothesis, $\delta(k)$ is unbounded 
above and so we can also choose 
$n_0$ so that
\begin{equation} \label{dn_0 > dj}
\delta(n_0) > \delta(j) \text{ for all } 0 \le j < n_0.
\end{equation}

Define a  line $i = L(j)$ in the $i,j$-plane by
\[
L(j)= \frac{1}{m} (j - n_0) + \delta(n_0).
\]

We claim that the set of $j$ with $\delta(j) \ge L(j)$ is finite.
By Lemma~\ref{deviation lemma},
\[
\lim_{j \to \infty} \frac{\delta(j)}{j -n_0} = \lim_{j \to \infty} \frac{\delta(j)}{j} = 0
\]
and  so  for sufficiently large $j$,
\[
\delta(j) \le \frac{1}{m} (j- n_0) <  \frac{1}{m}(j- n_0)+ \delta(n_0) = L(j),
\]
since $\delta(n_0) \ge 0$ (by Lemma~\ref{deviation lemma}). This proves the claim. 

Let  $J$ be the finite set $\{j\ \colon\  \delta(j)  \ge L(j),\ j\ge 0\}$ and
let $S = \{(\delta(j), j)\ \colon\ j \in J\}$.
Note that $S \ne \emptyset$ since $(\delta(n_0), n_0) \in S$.

Let $t_0 = t_0(m) \in  \N$ be the value of $j$ with $j \ge n_0$ 
for which $\delta(j) - L(j)$ is maximal.
Then $(\delta(t_0), t_0) \in S$.  Since, for the moment $m$ is fixed,  we suppress
the $m$ and simply write $t_0$ for $t_0(m)$.

Suppose now that $j \in [n_0,t_0]$.  
Then since $\delta(t_0) -L(t_0) \ge \delta(j) -L(j)$, it follows 
that   $\delta(t_0) \ge \delta(j) + L(t_0) - L(j) \ge \delta(j)$
since $j \in [n_0,t_0]$  and $L$ is monotonic increasing.
Thus  we have
\begin{equation}\label{eq:dt ge dj}
\delta(t_0) \ge \delta(j) \text{ for all } j \in [n_0,t_0].
\end{equation}

Let $\displaystyle \am = \alpha + \frac{1}{m}$ and consider the two lines
\[
i = \CK (j), \text{ where } \CK(j) = \alpha(j- t_0) + W(t_0)
\]  
and
\[
i = \CL (j), \text{ where } \CL(j) = \am(j- t_0) + W(t_0).
\]  
Both lines pass through $(W(t_0), t_0)$.

Define
\begin{equation}\label{def Fj}
F(j) =
\begin{cases}
\CK(j), &\text{if $0 \le j \le t_0$}\\
\CL(j), &\text{if $j \ge  t_0$}.
\end{cases}
\end{equation} 
We claim that for all $j \ge 0$
\begin{equation*} 
W(j) \le F(j).
\end{equation*} 

We prove this claim by considering  two separate
ranges of values for $j$, first $j \ge t_0$, then
$0 \le j \le t_0$.

In the range $j \ge t_0 $, by the choice of $t_0$ 
 we have  that $\delta(j) - L(j) \le \delta(t_0) - L(t_0)$ if $j \in J$.
But the same inequality holds for $j \notin J$ since then $\delta(j) - L(j) <0$ 
and $\delta(t_0) - L(t_0) \ge 0$.  Thus 
$\delta(j) \le L(j) +\delta(t_0) - L(t_0)$ for all $j \ge t_0$.  Therefore
\begin{align*}
W(j) & =  \delta(j) +  \alpha j\\ 
& \le   L(j) +\delta(t_0) - L(t_0) + \alpha j\\
& = \frac{1}{m} (j -n_0)  - \frac{1}{m} (t_0 -n_0) + \delta(t_0) + \alpha j\\
& = \frac{1}{m} (j -t_0) + \delta(t_0) + \alpha t_0 + \alpha(j -t_0)\\
& = \alpha_m (j -t_0) + \delta(t_0) + \alpha t_0 \\
&= \alpha_m (j -t_0) + W(t_0)   \\ 
& =   \CL(j).
\end{align*}
This proves the claim for the first range, i.e.,
\begin{equation}
\label{t0 le j} 
W(j)\leq\CL(j)\text{ for $j \ge t_0$.}
\end{equation}

Next we consider the range $0 \le j \le t_0$.
Note if $j \le n_0$ then $W(j)  = \delta(j) +  \alpha j 
\le  \delta(n_0) + \alpha j$ by Equation~\eqref{dn_0 > dj}, so
$W(j) \le  \delta(t_0) + \alpha j$  since $\delta(t_0) \ge \delta(n_0)$
by Equation~\eqref{eq:dt ge dj}.
But if $j \in [n_0,t_0]$ then
$W(j)  = \delta(j) +  \alpha j 
\le  \delta(t_0) + \alpha j$  by Equation~\eqref{eq:dt ge dj}.
So we conclude $W(j)  \le  \delta(t_0) + \alpha j$ for all $0 \le j \le t_0$.

Hence in this range
\begin{align*} 
W(j) &\le  \delta(t_0) + \alpha j \\
&=   \delta(t_0) +\alpha t_0 + \alpha (j -t_0)\\ 
& =  W(t_0) + \alpha (j -t_0) =   \CK(j).  
\end{align*}
Thus we have
\begin{equation} \label{0 le j le t0}
W(j) \le \CK(j) \text{ for $ 0 \le j  \le t_0$}
\end{equation}

Hence Equations~\eqref{t0 le j}, and~\eqref{0 le j le t0}  establish the claim, 
demonstrating that 
\begin{equation} \label{Fj ge Wj}
F(j) \ge W(j) \text{ for all } j \ge 0,
\end{equation} 
where
\begin{equation*}
F(j) =
\begin{cases}
\CK(j) &\text{if $0 \le j \le t_0$}\\
\CL(j) &\text{if $j \ge  t_0$}.
\end{cases}
\end{equation*} 

We now use this to define the elements $x_m$ and $y_m$.  From 
the definition of $W^+(n,\phi)$ (which we are denoting $W(n)$),  we know that 
whenever $j \ge 0$ and  $u,v \in X$ have the rays 
$u[0,\infty)$ and $v[0,\infty)$ equal,
it follows that the rays $\phi^j(u)[ W(j),\infty) = \phi^j(v)[ W(j),\infty)$.
Equivalently if $x$ and $y$ are  the  elements in
$\phi$-spacetime which  agree on
the ray $\{ (i,0) \in \Z^2 \colon i \ge 0\}$, then 
\begin{equation}\label{i > Wj}
j \ge 0 \text{ and } i \ge W(j) \text{ implies } x(i,j) = y(i,j).
\end{equation}
Moreover for each  $j\ge 0$, there exist $u_j, v_j \in X$ such that 
$u_j[0,\infty) =  v_j[0,\infty)$ but
\[
\phi^j(u_j)( W(j) -1) \ne  \phi^j(v_j)( W(j)-1).  
\]

In particular  this means that for $m \in \N$ there exist elements $\hat x_m,  \hat y_m\in \U$ 
which are equal on the ray $\{ (i,0) \in \Z^2 \colon i \ge 0\}$, but such that 
\begin{equation}\label{W minus 1}
\hat x_m( W(t_0(m)) -1, t_0(m)) \ne \hat y_m( W(t_0(m))-1,t_0(m)).
\end{equation}
(Note that the dependence of $t_0 = t_0(m)$ on $m$ is now salient so we return
to the more cumbersome notation.)
We use translates of $\hat x_m$ and $\hat y_m$ by the vectors
$(W(t_0(m)), t_0(m)) = (\delta(t_0(m)) + \alpha t_0(m), t_0(m))$ to define
$x_m, y_m \in \U$. More precisely, define 
$$x_m(i,j) = \hat x_m(i +W(t_0(m)), j+ t_0(m))$$  and
$$y_m(i,j) = \hat y_m(i +W(t_0(m)), j+ t_0(m)).$$  
Note that $x_m$ and $y_m$ agree on the ray $\{ (i,0) \in \Z^2 \colon i \ge 0\}$.

We proceed to check properties~\eqref{item:one1},~\eqref{item:two2}, and~\eqref{item:three3}
of the lemma's conclusion.

From Equation~\eqref{W minus 1} and the definition of $x_m$ and $y_m$ we have
$$x_m(-1,0) = \hat x_m(W(t_0(m))-1, t_0(m)) \ne \hat y_m(W(t_0(m))-1, t_0(m)) = y_m(-1,0),$$
and so~\eqref{item:three3} follows. 

To check~\eqref{item:one1}, suppose $-m \le j \le 0$ and $i\ge \alpha j$.  
Let $i' = i + W(t_0)$ and
$j' = j+ t_0$ and so $x_m(i, j) = \hat x_m(i', j')$ and $y_m(i, j) = \hat y_m(i', j')$. 
Hence if we show that $\hat x_m(i',j') = \hat y_m(i',j')$, 
then we have that $x_m(i,j) = y_m(i,j)$, which is the statement of~\eqref{item:one1}. 
This in turn follows from Equation~\eqref{i > Wj}
if we show $j' \ge 0$ and $i' \ge W(j')$.  We proceed to do so.  

Note that since $-m \le j \le 0$  and since, by construction, $n_0(m)$ and $t_0(m)$ 
satisfy  $m < n_0(m)  < t_0(m)$), we have
\[
0 \le t_0(m) -m \le t_0(m) +j  = j'
\]
To show $i' \ge W(j')$ observe that 
since $i \ge j \alpha$, it follows that 
$i' = i + W(t_0)  \ge j \alpha  + W(t_0)  = 
 (j' -t_0) \alpha  + W(t_0) = \CK(j')$.
Since $j' = j+ t_0(m) \le t_0(m)$ the definition of $F$ (equation \ref{def Fj}) shows
$\CK(j') = F(j')$, and 
we may apply Equation~\eqref{Fj ge Wj} to conclude
$i' \ge W(j')$.  Then by Equation~\ref{i > Wj} applied to
$\hat x_m$ and $\hat y_m$ at $(i',j')$ we have
$\hat x_m(i',j') = \hat y_m(i',j')$, so $x_m(i,j) = y_m(i,j)$.
This completes the proof of property~\eqref{item:one1}.

To check~\eqref{item:two2}, we assume that $j \ge 0$ and $i \ge \am j$. 
Again we let  $i' = i + W(t_0(m))$ and $j' = j+ t_0(m)$ and so $j' \ge t_0(m)$.  To
show $x_m(i,j) = y_m(i,j)$, 
it suffices to show $\hat x_m(i',j') = \hat y_m(i',j')$ when
\[
 j' \ge t_0(m) \text{  and } i' \ge \am j + W(t_0(m)) 
\]
But $ \am j + W(t_0(m)) =  \am (j' -t_0) +  W(t_0) = \CL(j')$,
so we have $ j' \ge t_0(m)$  and  $i' \ge \CL(j')$.

Since $j' \ge t_0(m)$ we conclude from the definition of 
$F$ (Equation~\eqref{def Fj}) that $F(j') = \CL(j')$.
So $i' \ge F(j') $ and hence by Equation~\eqref{Fj ge Wj}, 
$i' \ge F(j')  \ge W(j')$. Since $i' \ge W(j')$
we have $x_m(i',j') = y_m(i',j')$ by Equation~\ref{i > Wj}, completing the proof of~\eqref{item:two2}.  

The proof of the analogous result for $\alpha^-(\phi)$ and $\delta^-(n,\phi)$ is done
similarly.
\end{proof}

\subsection{Nonexpansiveness of light cone edges}

\begin{thm}\label{thm nonexpansive}
Suppose $\phi \in \End(X,\sigma)$  and $\alpha^+ = \alpha^+(\phi)$.
In the spacetime $\U$ 
of $\phi$ orient the line $u = \alpha^+ v$ so
that $\la \alpha^+, 1\ra$ is positive.
Then this oriented line is not a positively expansive subspace.
Similarly  if  $\alpha^- = \alpha^-(\phi)$, 
the line $u = \alpha^- v$ (oriented so that $\la \alpha^-, 1\ra$ is
positive) is  not a negatively expansive subspace.
\end{thm}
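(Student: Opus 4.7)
The approach is to invoke Proposition~\ref{prop: +expansive}: to show the oriented line $L = \{u = \alpha^+ v\}$ is not positively expansive in the spacetime $\U$, it suffices to produce $\eta \ne \nu$ in $\U$ that agree on $H^-(L)$. A direct computation with $\omega = du \wedge dv$ (taking $(\alpha^+, 1) \in L^+$ and solving $\omega((\alpha^+, 1), (a,b)) > 0$) identifies $H^-(L) = \{(i,j) \in \R^2 : i > \alpha^+ j\}$, so $(-1,0) \in H^+(L)$ is a natural candidate point of disagreement.

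The key tool is Lemma~\ref{spacetime lemma}. Assuming its hypotheses hold (i.e., $\alpha^+ \ge 0$ and $\delta^+$ unbounded), the lemma produces sequences $\{x_m\}, \{y_m\} \subset \U$ that differ at $(-1,0)$ and agree on the region $\{-m \le j \le 0,\ i \ge \alpha^+ j\} \cup \{j \ge 0,\ i \ge (\alpha^+ + 1/m) j\}$. Before applying it I would handle two reductions. If $\alpha^+ < 0$, replace $\phi$ by $\sigma^k\phi$ for $k$ large; by Proposition~\ref{alpha prop}(\ref{item:one}), $\alpha^+(\sigma^k\phi) = \alpha^+(\phi) + k \ge 0$, and the $\Z^2$-shear $(i,j) \mapsto (i+kj, j)$ furnishes a spacetime isomorphism $\U(\phi) \to \U(\sigma^k\phi)$ that carries the line $u = \alpha^+(\phi)v$ onto $u = \alpha^+(\sigma^k\phi) v$ while preserving the orientation of $L^+$, so positive non-expansiveness transfers. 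If $\delta^+$ is bounded, a shorter direct argument works: choose $n_k \to \infty$ with $\delta^+(n_k) \to \sup_n \delta^+(n)$, take witnesses $\hat x_k, \hat y_k \in \U$ that differ at $(W^+(n_k) - 1, n_k)$ and agree on $\{j \ge 0,\ i \ge W^+(j)\}$, then translate by $(W^+(n_k), n_k)$; the offsets $\delta^+(j+n_k) - \delta^+(n_k)$ are eventually smaller than any positive number, so any fixed point of $H^-(L) \cap \Z^2$ eventually lies in the agreement region of the translates.

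Once the lemma (or its bounded-case substitute) applies, I would pass to a convergent subsequence $x_m \to \eta$, $y_m \to \nu$ using compactness of $\U \subset \Sigma^{\Z^2}$. Because $x_m(-1,0) \ne y_m(-1,0)$ for every $m$ and there are only finitely many values in $\Sigma$, the limits satisfy $\eta(-1,0) \ne \nu(-1,0)$, so $\eta \ne \nu$. To verify $\eta(i,j) = \nu(i,j)$ for each $(i,j) \in H^-(L) \cap \Z^2$, split into cases: when $j \le 0$, conclusion (\ref{item:one1}) of Lemma~\ref{spacetime lemma} gives $x_m(i,j) = y_m(i,j)$ for every $m \ge -j$; when $j > 0$, conclusion (\ref{item:two2}) gives the same once $m$ is large enough that $\alpha^+ + 1/m \le i/j$, which is possible because $i > \alpha^+ j$. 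Taking limits yields $\eta = \nu$ on $H^-(L)$, so Proposition~\ref{prop: +expansive} produces the required failure of positive expansiveness. The analogous statement for $\alpha^-(\phi)$ follows by the parallel argument, using the analogous conclusion of Lemma~\ref{spacetime lemma} and the half-space $H^+$ in place of $H^-$.

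The main obstacle is the boundary-case reductions required to activate Lemma~\ref{spacetime lemma}: checking carefully that the shear induced by passage from $\phi$ to $\sigma^k\phi$ respects the orientation of $L^+$ and identifies $H^-(L)$ correctly in the two spacetimes, and writing the bounded-deviation case cleanly. The remainder of the proof is a compactness extraction plus a case split on the sign of $j$, both routine given the preceding lemmas.
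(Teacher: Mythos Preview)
Your proposal is correct and follows essentially the same approach as the paper: reduce to $\alpha^+ \ge 0$, split into the bounded- and unbounded-deviation cases, invoke Lemma~\ref{spacetime lemma} in the unbounded case and a direct translate-and-extract argument in the bounded case, and pass to limits by compactness. Your treatment is in fact slightly more careful than the paper's in two places---you explicitly justify the ``without loss of generality'' reduction via the shear $(i,j)\mapsto(i+kj,j)$ between $\U(\phi)$ and $\U(\sigma^k\phi)$, and in the bounded case you select $n_k$ with $\delta^+(n_k)\to\sup_n\delta^+(n)$ so that the limit pair agrees on all of $H^-(L)$ rather than on a translate of it---but these are refinements of the same argument, not a different route.
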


\begin{proof}
Let $\U$ be the $\phi$-spacetime of $(X,\sigma)$.
Replacing $\phi$ with $\sigma^k \phi^m$ and using 
part~(1) of Proposition~\ref{alpha prop}, 
without loss of generality we can assume that $\alpha^+(\phi)  \ge 0$.

{\bf Case 1: bounded deviation.}
As a first case we assume that the non-negative deviation function 
$\delta$ is bounded. Say $\delta(j) <  D$ for some $D > 0$ and all $j \in \N$. Since $\delta(j) \ge 0$
and $\alpha^+ \ge 0$, we have  $0 \le W^+(j, \phi) -\alpha^+ j = \delta(j) < D.$ 

If we have two elements $x, y \in \U$ satisfying $x(k,0) =  y(k,0)$ for $k \ge 0$
then whenever $j \ge 0$ and $i \ge D + \alpha^+ j$, we have
$i > W^+(j)$.  Hence
\begin{equation}\label{bounded case}
x(i, j) = y(i, j) \text{ for all } j \ge 0 \text{ and } i \ge D + \alpha^+ j
\end{equation} (see Equation~\eqref{i > Wj}).
Thus $x$ and $y$ agree in
the part of the upper half space to the right of the line 
$i = D + \alpha^+ j$. 

By the definition of $W^+(n) = W^+(n, \phi)$ for $n \in \N$
we may choose $\hat x_n, \hat y_n \in \U$  which agree
on the ray $\{ (i,0) \in \Z^2 \colon  i \ge 0\}$ such that
 $\hat x_n(W^+(n) -1, n) \ne \hat y_n(W^+(n) -1, n)$.

We want to create new colorings by translating $\hat x_n$ and $\hat y_n$ by
the vector $(W^+(n),n)$.  
More precisely for $n \ge 0$ we define $ x_n$ and $ y_n$
by $x_n(i,j) = \hat x_n(i + W^+(n), j + n)$.
Note that $x_n(-1, 0) \ne y_n(-1, 0)$, since
$x_n(-1, 0) = \hat x_n(W^+(n) -1, n) \ne \hat y_n(W^+(n) -1, n) = y_n(-1, 0)$.

For all $j \ge -n$ and $i \ge  D + \alpha^+ j$,  we claim that 
\[
x_n(i, j) = y_n(i, j). 
\]
To see this define $i' = i + W^+(n)$ and  $j' = j+n$ and so
$x_n(i, j) = \hat x_n(i', j')$ and $y_n(i, j) = \hat y_n(i', j')$.
Then 
\begin{align*}
i' &= i + W^+(n) \\
&\ge  D +  \alpha^+ j+ W^+(n) \\
&= D + \alpha^+ j'  +(W^+(n) - \alpha^+ n)\\
&= D + \alpha^+ j'  +\delta(n)\\
& \ge D + \alpha^+ j'.
\end{align*}

Hence $\hat x_n(i',j')$
and $\hat y_n(i',j')$ are equal by Equation~\ref{bounded case}
whenever $i \ge D + \alpha^+ j$ and $j \ge -n$ (since $j' \ge 0$ when $j \ge -n$).
But $x_n(i,j) = \hat x_n(i',j')$ and $y_n(i,j) = \hat y_n(i',j')$ so 
$x_n(i,j) = y_n(i,j)$.
Thus $x_n$ and $y_n$ agree at $(i,j)$ if
$i \ge D + \alpha^+ j$ and $j \ge -n$. 

Since $\U$ is compact
we can choose convergent subsequences (also denoted $x_n$ and $y_n$).
Say $\lim x_n = \hat x$ and $\lim y_n = \hat y$.  Then clearly
$\hat x(-1, 0) \ne \hat y(-1, 0)$ and $\hat x(i, j) = \hat y(i, j)$
for all $i > D  + \alpha^+ j$. So $\hat x$ and $\hat y$ agree on the
half space $H^+ = \{(i,j)\colon  i > D + \alpha^+ j\}$.  This implies  the oriented
line $ u = \alpha^+ v$ is not positively expansive.  The case of the line $u = \alpha^- v$ is handled
similarly.

{\bf Case 2: unbounded deviation.} 
We consider  the elements $x_m, y_m$ guaranteed by 
Lemma~\ref{spacetime lemma}, and recall that they satisfy properties~\eqref{item:one1}-\eqref{item:three3} 
of the lemma.  

Since $\U$ is compact, by passing to subsequences, we can assume 
that both sequences converge in $\U$, say to
$\hat x$ and $\hat y$.  Clearly $\hat x(-1,0) \ne \hat y(-1,0)$.  
We claim the colorings  $\hat x$ and $\hat y$ agree on the half space 
$H^+ = \{ (i,j)\colon  i > \alpha j\}$  of  $\Z^2$.  
It then follows that the oriented line $u = \alpha v$ is not positively expansive
(see Definition~\ref{def: expansive}).

To prove the claim, note that if $(i,j) \in H^+$,\  
$-m_0 \le j \le 0$ and $m \ge m_0$ then $x_m(i,j) = y_m(i,j)$.
Hence the limits satisfy $\hat x(i,j) = \hat y(i,j)$ 
whenever $(i,j) \in H^+$ and $j \le 0$.  But also if
$j > 0$ and $i > \alpha j$, then for some $n_0 > 0$
we have $i \ge (\alpha + \frac{1}{n_0}) j$ and it follows that
$x_m(i,j) =   y_m(i,j)$ whenever $m > n_0$.  Hence the limits satisfy
$\hat x(i,j) = \hat y(i,j)$.   

The case of the line $u = \alpha^- v$ is handled
similarly.
\end{proof}

\subsection{Expansive subspaces}
We want  to investigate which one-dimensional 
subspaces in a spacetime are expansive.
Since the horizontal axis in a spacetime is always positively expansive
for an endomorphism and expansive for an automorphisms, we restrict our
attention to lines in $\R^2 = \{(u,v)\}$ given by $u = mv$ where $m \in \R$.
(We write the abscissa as a function of the ordinate for convenient comparison
with the edges of $\A(\phi)$ which are $u = \alpha^+ v$ and $u = \alpha^- v$.)

\begin{prop}\label{prop m>alpha}
Suppose $L$ is a line  in $\R^2$ given by  $u = mv$  and
oriented so that $\la m, 1\ra$ is positive.  Then: 
\begin{enumerate}
\item
\label{item:1} 
If $m > \alpha^+(\phi)$, then  $L$ is positively expansive.
\item
\label{item:2} 
If $m < \alpha^-(\phi)$, then  $L$ is negatively expansive.
\end{enumerate}
Moreover if $\phi$ is an automorphism and if $m > \max\{ \alpha^+(\phi), -\alpha^-(\phi^{-1})\}$ 
or if $m < \min\{ \alpha^-(\phi), -\alpha^+(\phi^{-1})\}$, 
then $L$ is expansive.
\end{prop}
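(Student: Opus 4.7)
The plan is to invoke Proposition~\ref{prop: +expansive} (and its analog for negative expansiveness) to reduce each claim to a unique-extension statement from an open half-space, and then to exploit the coding properties controlled by $W^\pm(n,\phi)$ and their asymptotic slopes.

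For (1), assume $m > \alpha^+(\phi)$ and suppose $\eta, \nu \in \U$ agree on $H^-(L) = \{(i,j) \colon i > mj\}$. Writing $\eta_k$ for row $k$ of $\eta$, vertical shift-invariance of $\U$ gives $\eta_j = \phi^n(\eta_{j-n})$ for every $n \ge 0$ and every $j \in \Z$. The rows $\eta_{j-n}$ and $\nu_{j-n}$ agree on $\{i > m(j-n)\}$, so by the defining property of $W^+(n,\phi)$ their $\phi^n$-images agree on a ray whose left endpoint is at most $m(j-n)+W^+(n,\phi)+1 = mj + 1 + n\bigl(W^+(n,\phi)/n - m\bigr)$. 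Since $W^+(n,\phi)/n \to \alpha^+(\phi) < m$, this endpoint tends to $-\infty$; hence $\eta_j = \nu_j$ for every $j$, so $\eta = \nu$, which gives positive expansiveness by Proposition~\ref{prop: +expansive}. Statement (2) is the mirror argument: the corresponding right endpoint of guaranteed agreement is of order $mj + n\bigl(W^-(n,\phi)/n - m\bigr)$, which tends to $+\infty$ because $\alpha^-(\phi) > m$.

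For the ``moreover'' statement, assume $\phi$ is an automorphism and $m > \max\{\alpha^+(\phi), -\alpha^-(\phi^{-1})\}$. Positive expansiveness follows from (1). For negative expansiveness we run the same strategy going downward: since $\phi$ is invertible we also have $\eta_j = (\phi^{-1})^n(\eta_{j+n})$ for all $n \ge 0$. If $\eta, \nu$ agree on $H^+(L) = \{i < mj\}$, then for any fixed $j$ and any $n \ge 0$ the rows $\eta_{j+n}, \nu_{j+n}$ agree on $\{i < m(j+n)\}$; applying $(\phi^{-1})^n$ and using the defining property of $W^-(n, \phi^{-1})$ shows that $\eta_j, \nu_j$ agree on a ray of the form $(-\infty,\, mj + n(m + \alpha^-(\phi^{-1})) + o(n)]$. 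The hypothesis $m + \alpha^-(\phi^{-1}) > 0$ forces this to exhaust $\Z$ as $n \to \infty$, so $\eta_j = \nu_j$. The case $m < \min\{\alpha^-(\phi), -\alpha^+(\phi^{-1})\}$ is symmetric with the roles of $\alpha^\pm$ interchanged.

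There is no serious obstacle; the only delicate point is recognizing that downward propagation in the $\phi$-spacetime is what forces the use of $\phi^{-1}$ (which is why the hypothesis ``$\phi$ is an automorphism'' appears), and that it is the Lyapunov-type exponents of $\phi^{-1}$, rather than of $\phi$, that govern propagation in that downward direction.
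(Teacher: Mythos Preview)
Your argument for parts (1) and (2) is correct and is essentially the paper's argument written in contrapositive/limit form: where you fix a row $j$, drop to row $j-n$, and let $n\to\infty$ so that the coded ray exhausts $\Z$, the paper instead fixes a single target point $(u_0,v_0)$ to the left of $L$ and exhibits one specific $n_0$ for which $(u_0-W^+(n_0),\,v_0-n_0)$ has crossed to the right of $L$, so that a right-horizontal ray codes $(u_0,v_0)$. The content is the same; only the packaging differs.

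For the ``moreover'' clause your route genuinely diverges from the paper's. You stay in the single spacetime $\U(\phi)$ and, using invertibility, propagate \emph{upward} via $\eta_j=(\phi^{-1})^n(\eta_{j+n})$; the relevant one-sided growth is then controlled by $W^-(n,\phi^{-1})$ (resp.\ $W^+(n,\phi^{-1})$), and your hypotheses $m>-\alpha^-(\phi^{-1})$ and $m<-\alpha^+(\phi^{-1})$ are exactly what make those rays exhaust $\Z$. The paper instead introduces the reflection $R(u,v)=(u,-v)$, observes that it induces a bijection $\U(\phi)\to\U(\phi^{-1})$, checks that $R$ preserves positive/negative expansiveness of non-vertical lines (because $R$ simultaneously reverses the ambient orientation and the orientation of $L$), and then applies parts (1)--(2) to the line $i=-mj$ inside $\U(\phi^{-1})$. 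Your approach is more hands-on and avoids the bookkeeping of orientations under reflection; the paper's approach is more structural and makes transparent why it is the exponents of $\phi^{-1}$, not of $\phi$, that enter. Both are valid and of comparable length.
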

\begin{proof}
We first consider~\eqref{item:1}.  We show that if
$\U$ is the spacetime of $\phi$ and $x, y \in \U$ agree 
on the right side of $u = mv$, then they also agree on the left side.
This implies that the oriented line $L$ is positively expansive.
Since $m > \alpha^+(\phi)$, the vector
$\langle \alpha^+(\phi),1 \rangle$ is not parallel to $L$ and
points in the direction from the right side of $L$ to the left side.

Let $W^+(n) = W^+(n,\phi)$ so
\[
\lim_{n \to \infty} \frac{W^+(n)}{n} = \alpha^+(\phi)
\]
(see Equation~\eqref{alpha def}) and hence
\[
\lim_{n \to \infty} \frac{1}{n}\langle W^+(n),n \rangle = \langle \alpha^+(\phi), 1 \rangle.
\]

It follows that for sufficiently large $n$, the vector $\langle W^+(n), n \rangle$
is also  not parallel to $L$ and 
points in the direction from the right side of $L$ to the left side.
Hence, given any $(u_0, v_0) \in \Z^2$ on the left side of $L$, there exists $n_0 >0$ 
such that if $u_1 = u_0 - W^+(n_0)$ and $v_1 = v_0 -n_0$, 
then $(u_1 , v_1)$ is on the right side of $L$.  The ray
$\{ (t, v_1) \colon u_1 \le t \}$ in $\U$ lies entirely to the right 
of $L$ and codes $\{(u_0,v_0)\}$.  

It follows that if $x, y \in \U$
agree to the right of $L$, then they also agree at $(u_0,v_0).$  Since
$(u_0,v_0)$ is an arbitrary point to the left of $L$, it follows that
$L$ is positively expansive.   The proof of~\eqref{item:2} is analogous.

To show the final statement, note the the reflection
$R\colon \R^2 \to  \R^2 $ given by $R(u,v) = (u, -v)$ has the 
property that it switches the spacetimes $\U(\phi)$ and 
$\U(\phi^{-1})$, i.e., it induces a map $R^* \colon  \U(\phi) \to \U(\phi^{-1})$
given by $R^*(\eta) = \eta \circ R$.

If $L$ is the line $i = m j$, then our convention for the orientation of $L$ 
was chosen so that  
\[
L^+ = \{ \la u, v \ra \colon \la u, v \ra \in L \text{ and } v >0\}.
\]
 Hence the convention implies that $R(L^+)$ is the set of
{\em negative} vectors in $R(L)$  and
 the positive vectors in $R(L)$ are $R(L^-)$  where $L^- = -L^+$.
Note that $H^+(L)$ consists of the vectors above the line $L$ 
so $R(H^+(L))$ is the set of vectors below $R(L)$.  
(see Definition~\ref{def: sided-expansive} and the paragraph preceding it).
But since
$R$ reverses the orientation of $L$ we have $H^+(R(L)) = R(H^+(L)$.
It follows that $L$ is positively (resp. negatively) expansive in $\U(\phi)$ if and only
if $R(L)$ is positively (resp. negatively) expansive in $\U(\phi^{-1})$, i.e. $R$ acting
on non-vertical lines preserves
positive expansiveness and negative expansiveness.

Now consider the line $L$ given by $i = mj$ in $\U(\phi)$, and so $R(L)$
is the line $i = -mj$ in $\U(\phi^{-1})$.  By part~\eqref{item:2}, if
$-m < \alpha^-(\phi^{-1})$ (or equivalently if $m > -\alpha^-(\phi^{-1})$), 
then the line $R(L)$ is negatively expansive in
$\U(\phi^{-1})$. 
Hence
$m > -\alpha^-(\phi^{-1})$ implies that $L$ is negatively expansive
in $\U(\phi)$.  If we also have $m > \alpha^+(\phi)$ then by part~\eqref{item:1}, 
the line $L$ is also positively expansive and thus it is, in fact, 
expansive.  The case that $m < \min\{ \alpha^-(\phi), -\alpha^+(\phi^{-1})\}$ 
is handled similarly.
\end{proof}

\section{Asymptotic behavior}
\label{sec:asymptotic}

\subsection{The asymptotic light cone}
The edges of the light cone $\C(\phi)$ are given
by the graphs of the functions $i = W^+(j, \phi)$ 
$i = W^-(j,\phi)$.  Since these functions have nice
asymptotic properties, so does the cone they determine,
which motivates the following definition: 
\begin{defn} The {\em asymptotic light cone} of $\phi$ 
is defined to be 
\[
\A(\phi) = \{(u,v) \in \R^2 \colon \alpha^-(\phi) v \le  u \le \alpha^+(\phi)v \}. 
\]
\end{defn}

This means $\A(\phi)$ is the cone in $\R^2$ which does not
contain the $i$-axis and which is bounded by the lines
$u = \alpha^+(\phi) v$ and $u = \alpha^-(\phi) v$.
We view $\A(\phi)$ as a subset of $\R^2$ rather
than of $\Z^2$, as we want to consider lines with
irrational slope that may lie in $\A(\phi)$ but would intersect
$\C(\phi)$ only in $\{0\}$.

We begin by investigating 
the deviation of the function $W^+(n, \phi)$ from the linear
function $n \alpha^+(\phi)$.  Observe 
that the asymptotic light cone $\A(\phi)$ is a subset of the light
cone $\C(\phi)$, as an immediate corollary of part~\eqref{item:three} of
Lemma~\ref{deviation lemma}.

\begin{cor}
The set of integer points in the asymptotic light cone $\A(\phi)$ is a subset of the 
light cone $\C(\phi)$.
\end{cor}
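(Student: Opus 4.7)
The plan is to read the corollary off directly from part~\eqref{it-dev:partthree} of Lemma~\ref{deviation lemma}, which supplies the sign constraints $\delta^+(n,\phi) \ge 0$ and $\delta^-(n,\phi) \le 0$ for every $n \ge 0$. My first step is to unpack these via the definitions of the deviation functions: they rewrite as the chain of bounds
\begin{equation*}
W^-(n,\phi) \;\le\; n\alpha^-(\phi) \;\le\; n\alpha^+(\phi) \;\le\; W^+(n,\phi)
\end{equation*}
for every $n \ge 0$, where the middle inequality uses part~\eqref{item:three} of Proposition~\ref{alpha prop}. Geometrically this says that at each nonnegative height $n$, the horizontal slice of $\A(\phi)$ is contained in the slice $\cI(n,\phi) = [W^-(n,\phi),W^+(n,\phi)]$ of $\C_f(\phi)$.

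The second step is then to take an integer point $(i,j) \in \A(\phi)$. The defining inequality $\alpha^-(\phi)\, j \le i \le \alpha^+(\phi)\, j$ combined with $\alpha^-(\phi) \le \alpha^+(\phi)$ forces $j \ge 0$ (the origin $(0,0)$ lies in $\C(\phi)$ trivially, since $\phi^{0}$ is the identity and $W^\pm(0,\phi) = 0$). Chaining the defining inequality with the display above yields $W^-(j,\phi) \le i \le W^+(j,\phi)$, which places $(i,j) \in \C_f(\phi) \subset \C(\phi)$.

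There is no real obstacle to overcome: the corollary is essentially a geometric repackaging of part~\eqref{it-dev:partthree} of Lemma~\ref{deviation lemma}, and the entire proof reduces to one chain of inequalities. The only modest point of care is treating the degenerate origin separately, since the defining inequalities of $\A(\phi)$ give no information about the values $W^\pm(0,\phi)$ beyond what comes from $\phi^0 = \mathrm{id}$.
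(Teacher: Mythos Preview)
Your approach matches the paper's: both reduce the corollary to part~\eqref{it-dev:partthree} of Lemma~\ref{deviation lemma}, and your chain of inequalities is exactly the intended unpacking.

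There is one small slip. You assert that $\alpha^-(\phi)\,j \le i \le \alpha^+(\phi)\,j$ together with $\alpha^-(\phi) \le \alpha^+(\phi)$ forces $j \ge 0$. This fails in the degenerate case $\alpha^-(\phi) = \alpha^+(\phi)$, where $\A(\phi)$ collapses to the line $u = \alpha^+(\phi)\,v$ and can contain integer points with $j < 0$ (for instance when $\alpha^+(\phi)$ is rational, as in Example~\ref{ex: cone}). The fix is immediate and uses the symmetry built into the definition of $\C(\phi)$: if $(i,j) \in \A(\phi) \cap \Z^2$ with $j < 0$, then $(-i,-j) \in \A(\phi) \cap \Z^2$ has positive second coordinate, so your argument gives $(-i,-j) \in \C_f(\phi)$, and hence $(i,j) \in -\C_f(\phi) = \C_p(\phi) \subset \C(\phi)$.
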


If $\phi \in \Aut(X)$ it is natural to consider the relationship between 
$\C(\phi)$ and $\C(\phi^{-1})$, or between
$\A(\phi)$ and $\A(\phi^{-1})$.  The spacetime
$\U(\phi)$ of $\phi$ is not the same as the spacetime 
$\U(\phi^{-1})$ of $\phi^{-1}$, but there is a natural
identification of $\U(\phi)$ with the reflection of
$\U(\phi^{-1})$ about the horizontal axis $j = 0$. 
In general, it is not true 
that  $\A(\phi^{-1})$ is the reflection
of $\A(\phi)$ about the $u$-axis (Example~\ref{ex:three-dots} is one where this fails).  
On the other hand, if $(X, \sigma)$ is a subshift, there  is at least one line in the intersection of
 $\A(\phi^{-1})$ with the reflection of $\A(\phi)$ about the $u$-axis.

To see this, note that the cone $\A(\phi^{-1})$ has edges which are the lines
\begin{equation}
u = \alpha^+(\phi^{-1}) v \text{ and } u = \alpha^-(\phi^{-1}) v, 
\end{equation}
while the cone obtained by reflecting $\A(\phi)$ about the $u$-axis
has edges given by
\begin{equation}\label{eqn: cone edge}
u = -\alpha^-(\phi) v \text{ and } u = -\alpha^+(\phi) v. 
\end{equation}

Hence the line $u = mv$ lies in the intersection 
 $\A(\phi^{-1})$ and the reflection of $\A(\phi)$ in the line $u$-axis if
\[
m \in [\alpha^-(\phi^{-1}),\alpha^+(\phi^{-1})] \cap
[-\alpha^+(\phi),-\alpha^-(\phi)].
\]
If these two intervals are disjoint, then either
\[
\alpha^+(\phi^{-1}) < -\alpha^+(\phi) \text{ or }
-\alpha^-(\phi) < \alpha^-(\phi^{-1}).
\]
Either of these inequalities contradict part~\eqref{item:five} of Proposition~\ref{alpha prop}.

In a different vein, the cone $\A(\phi)$ is a conjugacy invariant: 
\begin{prop}\label{conj invariant}
Suppose $(X_i, \sigma_i)$ is a shift for $i = 1,2$ and 
$\phi_i \in \End(X_i)$. Suppose further that $h\colon X_1 \to X_2$ is a 
topological conjugacy from $\sigma_1$ to $\sigma_2$.
If
\[
\phi_2 = h \circ \phi_1 \circ h^{-1},
\]
then $\A(\phi_1) = \A(\phi_2)$.
\end{prop}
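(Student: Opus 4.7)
The plan is to show that the two defining slopes of the asymptotic light cone, namely $\alpha^+$ and $\alpha^-$, are invariant under topological conjugacy of the underlying shifts. Since $\A(\phi)$ is cut out exactly by the two lines $u = \alpha^+(\phi) v$ and $u = \alpha^-(\phi) v$, once we establish $\alpha^{\pm}(\phi_1) = \alpha^{\pm}(\phi_2)$ the equality $\A(\phi_1) = \A(\phi_2)$ is immediate.

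By the Curtis-Hedlund-Lyndon theorem the conjugacy $h$ and its inverse $h^{-1}$ are both sliding block codes; let $R$ be an integer exceeding the ranges of both $h$ and $h^{-1}$. Observe that $\phi_2^n = h \circ \phi_1^n \circ h^{-1}$ for every $n \ge 1$. First I would track how this conjugation affects the quantity $W^+(n, \cdot)$. Suppose $x, y \in X_2$ agree on $[0, \infty)$. Then $h^{-1}(x)$ and $h^{-1}(y)$ agree on $[R, \infty)$, so $\phi_1^n(h^{-1}(x))$ and $\phi_1^n(h^{-1}(y))$ agree on $[R + W^+(n,\phi_1), \infty)$ by the defining property of $W^+(n, \phi_1)$, and applying $h$ shows $\phi_2^n(x)$ and $\phi_2^n(y)$ agree on $[2R + W^+(n, \phi_1), \infty)$. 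Hence
\[
W^+(n, \phi_2) \le W^+(n, \phi_1) + 2R.
\]
The same argument, run with the roles of $\phi_1$ and $\phi_2$ interchanged (using $\phi_1^n = h^{-1} \circ \phi_2^n \circ h$), yields $W^+(n, \phi_1) \le W^+(n, \phi_2) + 2R$, so
\[
\bigl| W^+(n, \phi_1) - W^+(n, \phi_2) \bigr| \le 2R \quad \text{for all } n \ge 1.
\]

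Dividing by $n$ and letting $n \to \infty$ the bounded additive discrepancy disappears, and the existence of the limits (guaranteed by Fekete's lemma applied to the subadditive sequence $\{W^+(n, \phi_i)\}$ as in Lemma~\ref{lemma:subadd} and Definition~\ref{alpha def}) gives $\alpha^+(\phi_1) = \alpha^+(\phi_2)$. An entirely analogous argument, using the rays $(-\infty, 0]$ in place of $[0, \infty)$, shows $\alpha^-(\phi_1) = \alpha^-(\phi_2)$. Plugging these equalities into the definition of the asymptotic light cone yields $\A(\phi_1) = \A(\phi_2)$.

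The argument is essentially routine once one notices that conjugating by a block code of bounded range can only shift the light-cone boundary functions by an additive constant; the mild subtlety is making sure one correctly composes the range bounds of $h$ and $h^{-1}$ when propagating agreement of $x$ and $y$ through $\phi_2^n = h \circ \phi_1^n \circ h^{-1}$, which is why I would keep a single constant $R$ bounding both ranges throughout.
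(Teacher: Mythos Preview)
Your proof is correct and follows essentially the same route as the paper's: both argue that conjugating by a sliding block code perturbs $W^+(n,\cdot)$ (and $W^-(n,\cdot)$) by at most an additive constant independent of $n$, so the limits $\alpha^\pm$ agree. Your version is slightly more explicit in tracing the composition $h\circ\phi_1^n\circ h^{-1}$ and in taking a single constant $R$ bounding the ranges of both $h$ and $h^{-1}$, whereas the paper keeps separate constants $D$ and $D'$; this is a cosmetic difference only.
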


\begin{proof}
Since $h$ is a block code, there is a constant
$D >0$, depending only on $h$, such that 
for any $n \in \Z$ the ray
$[n,\infty)$ $h$-codes $[n+ D,\infty)$ and the ray $(-\infty, n]$ 
$h$-codes $(-\infty, n-D]$.  It follows that 
$W^+(m, \phi_1) \le W^+(m, \phi_2) + 2D$.  Switching the roles
of $\phi_1$ and $\phi_2$ and considering $h^{-1}$, for
which there is $D'>0$ with properties analogous to those of
 $D$, we see that $W^+(m, \phi_2) \le W^+(m, \phi_1) + 2D'$.  By
 the definition of $\alpha^+$ (see Equation~\ref{alpha def}), 
\[
\alpha^+(\phi_1) = \lim_{n \to \infty}\frac{W^+(n, \phi_1)}{n}
= \lim_{n \to \infty}\frac{W^+(n, \phi_2)}{n} = \alpha^+(\phi_1).
\]
The proof that $\alpha^-(\phi_1) = \alpha^-(\phi_2)$ is similar, 
and thus the asymptotic light cones of $\phi_1$ and $\phi_2$ are identical.
\end{proof}

\subsection{A complement to Theorem~\ref{thm nonexpansive}}

In Theorem~\ref{thm nonexpansive} we showed that lines in the
spacetime of an endomorphism $\phi$ which form the boundary of its asymptotic
light cone $\A(\phi)$ are nonexpansive subspaces.  In this section we want to
show that in many instances, given an arbitrary \z2s $Y$ and
a nonexpansive subspace $L \subset \R^2$ for $Y$, there is a
$\Z^2$-isomorphism $\Psi$ taking the space $Y$ to the underlying $\Z^2$-subshift of a spacetime $\U$
 of an automorphism
$\phi \in \Aut(X,\sigma)$ for some shift $(X,\sigma)$ such that $\Psi(L)$
is an edge of the asymptotic light cone $\A(\phi)$.  In
particular this holds if $Y$ has finitely many nonexpansive
subspaces. 
 Hence in that case every nonexpansive subspace in $Y$ is
(up to isomorphism) an edge of an asymptotic light cone for some
automorphism.

To do this it is useful to introduce the notion of 
{\em expansive ray} which incorporates both the subspace and its
orientation

By a {\em ray} in  $\R^2$  we mean a set $\rho \subset \R^2$ such that
there exists $w \ne 0 \in \R^2$ with
\[
\rho = \rho(w) = \{ t w\colon    t \in [0, \infty)\}. 
\]
The space of all rays in $\R^2$ is naturally homeomorphic to the set 
of unit vectors in $\R^2$, which is the circle $S^1$.

\begin{defn}
Let $Y$ be a \z2s.  We say $\rho$  is an {\em expansive
ray } for $Y$ if the line $L$ containing $\rho$ with orientation given
by $L^+ = \rho \cap (L\setminus \{0\})$ is positively expansive
(see Definition~\ref{def: sided-expansive} and the paragraph preceding it).
\end{defn}

The concept of expansive ray is essentially the same as that of 
{\em oriented expansive line} introduced in \S 3.1 of \cite{CK}.
We emphasize that this concept is defining {\em one-sided expansiveness}
for the line $L$ containing $\rho$.  Which side of $L$ codes the other is determined
by the orientation of $\rho$ and the orientation of $\Z^2$.

To relate this to our earlier notions  of expansive
(Definition~\ref{def: expansive}) observe that
if $L$ is the subspace containing $\rho$, 
then $L$ is expansive if and only if both 
$\rho$ and $-\rho$ are expansive rays.  In this terminology, 
Theorem~\ref{thm nonexpansive} says that the rays 
$\rho^+(\phi) := \{ \la \alpha^+ v,  v \ra\colon v \ge 0\}$ and 
$\rho^-(\phi) := \{ \la \alpha^- v, v \ra \colon v \le 0\}$ are nonexpansive rays.
We note that it is not in general the case that $-\rho^+(\phi)$
and $-\rho^-(\phi)$ are nonexpansive rays.

The following lemma is essentially contained in \cite{BL}, but differs from
results there in that
we consider one-sided expansiveness. In particular note the
following result implies that being positively expansive is an open condition
for oriented one-dimensional subspaces of the $\R^2$ associated to a \z2s. 
Similarly being negatively expansive is an open condition.

\begin{lemma}\label{lemma : E open}
If $\E \subset S^1$ is the set of expansive rays for 
a \z2s $Y$, then $\E$ is open.
\end{lemma}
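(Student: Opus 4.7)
The plan is to reduce positive expansiveness of a perturbed oriented line to an application of Proposition~\ref{prop: exp line} via a finite witness set which is robust under small perturbations of the line direction.

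Fix $\rho_0 \in \E$ with associated oriented line $L_0$, and choose any integer point $e \in H^+(L_0) \cap \Z^2$. By Proposition~\ref{prop: +expansive}, positive expansiveness of $L_0$ is equivalent to the statement that $\eta|_{H^-(L_0)}$ extends uniquely to a $Y$-coloring of $\Z^2$; in particular, $\eta(e)$ is a function of $\eta|_{H^-(L_0)}$. The first key step will be a compactness argument producing a finite set $F_e \subset H^-(L_0) \cap \Z^2$ with the property that $\eta|_{F_e}$ already determines $\eta(e)$. The argument proceeds by contradiction: if no finite witness existed, then for each finite $S \subset H^-(L_0) \cap \Z^2$ one could find $\eta_S, \nu_S \in Y$ agreeing on $S$ but with $\eta_S(e) \ne \nu_S(e)$; exhausting $H^-(L_0) \cap \Z^2$ by a nested sequence of such $S$ and passing to a convergent subsequence using compactness of $Y$ would produce $\eta_\infty, \nu_\infty \in Y$ agreeing on all of $H^-(L_0)$ but disagreeing at $e$, contradicting the equivalence supplied by Proposition~\ref{prop: +expansive}.

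With $F_e$ in hand, set $F = F_e \cup \{e\}$. Since $e \in H^+(L_0)$ and $F \setminus \{e\} \subset H^-(L_0)$, the point $e$ is automatically the unique extreme point of $P = \text{conv}(F)$ lying in $H^+(L_0)$; combined with the determination property of $F_e$, this places us precisely in the hypotheses of Proposition~\ref{prop: exp line}. Now each point of $F$ lies at strictly positive distance from $L_0$ on its designated side, and $F$ is finite, so there is a neighborhood $U$ of $\rho_0$ in $S^1$ such that for every $\rho \in U$ with associated oriented line $L$ we still have $F_e \subset H^-(L)$ and $e \in H^+(L)$. Conditions (1) and (2) of Proposition~\ref{prop: exp line} therefore persist for $L$, while condition (3) is a statement about $Y$ alone and is independent of $L$. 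Proposition~\ref{prop: exp line} then yields positive expansiveness of $L$, i.e. $\rho \in \E$, giving $U \subset \E$.

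The main obstacle is the compactness argument extracting the finite witness $F_e$ from the equivalent form of positive expansiveness given by Proposition~\ref{prop: +expansive}; once this is in hand, openness of $\E$ follows from the elementary topological observation that a finite set of integer points lying strictly off the line $L_0$ continues to lie in the appropriate open half-plane under small perturbations of the direction of $L_0$.
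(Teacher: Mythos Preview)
Your proof is correct, but it takes a genuinely different route from the paper's.

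The paper proves the complement is closed: it takes a convergent sequence of nonexpansive rays $\rho_n \to \rho_0$, chooses for each $n$ a pair $\eta_n, \eta_n' \in Y$ agreeing on $H^-(L_n)$ but disagreeing at some $z_n$, normalizes by shifting so the $z_n$ stay bounded, passes to subsequential limits $\eta_0, \eta_0'$, and checks that these agree on $H^-(L_0)$ while disagreeing at the common limit point $z_0$. This uses only Proposition~\ref{prop: +expansive}.

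Your argument instead works directly on the expansive side: from positive expansiveness of $L_0$ you extract, via compactness, a \emph{finite} witness $F_e \subset H^-(L_0)$ coding $e$, and then observe that the resulting configuration $F = F_e \cup \{e\}$ satisfies the hypotheses of Proposition~\ref{prop: exp line} not just for $L_0$ but for every nearby oriented line, since a finite set of points lying strictly in an open half-plane is stable under small perturbations of the bounding line. This is a bit more constructive (it exhibits an explicit open neighborhood of $\rho_0$ via a single finite pattern) and makes essential use of Proposition~\ref{prop: exp line}, which the paper's proof of this lemma does not invoke. Both arguments are short; yours trades the sequential limit for a pre-packaged geometric criterion.
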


\begin{proof}
We show that the set $\cN$  of nonexpansive rays is closed.
Suppose that $\rho_n = \{t w_n \colon t \ge 0\}_{n=1}^\infty$ is a sequence
of rays in $\R^2$ with $\displaystyle \lim_{n \to \infty}w_n = w_0 \ne 0$
so that $\rho_0$ is the limit of the rays $\rho_n, n \ge 1$.  If the
rays $\rho_n$ are nonexpansive we must show that $\rho_0$
is nonexpansive.

Let $L_n$ be the line containing $w_n$  with the orientation such that $w_n \in L^+_n $
and let $H^+(L_n)$ be the component of $\R^2 \setminus {L_n}$ such that
for all  $w' \in H^+(L_n)$ the ordered basis $\{w_n, w'\}$ is positively oriented
and let $H^-(L_n)$ be the other component of $\R^2 \setminus {L_n}$.
Define the linear function $f_n \colon \R^2 \to \R$ by $f_n(u) = u \cdot v_n$
where $v_n$ is a unit vector in $H^+(L_n)$ which is orthogonal to $w_n$.
Then we have the following:
\begin{itemize}
\item $L_n = \ker(f_n)$
\item  A vector $u$ is in $H^+(L_n)$ if and only if $f_n(u) >0$ and
in $H^-(L_n)$ if and only if $f_n(u) <0.$
\item $\displaystyle \lim_{n \to \infty}f_n(v_0)  = f_0(v_0) = 1$.
\end{itemize}
By Proposition~\ref{prop: +expansive}  we know there exist
$\eta_n, \eta'_n \in Y$ and $z_n \in \Z^2$ such that $\eta_n(v) =  \eta'_n(v)$ 
for all $v \in H^-(L_n)$ but $\eta_n(z_n) \ne  \eta'_n(z_n)$.  By shifting 
$\eta_n$ and $\eta'_n$ we may assume lengths $|z_n|$ are bounded.
Choosing a subsequence we may assume $\{z_n\}$ is constant, say,
$z_n = z_0 \in Z^2$.
Since $Y$ is compact we may further choose subsequences
$\{\eta_n\}_{n=1}^\infty$ and $\{\eta'_n\}_{n=1}^\infty$ which converge,
say to $\eta_0$ and $\eta'_0$ respectively.
Clearly $\eta_0(z_0) \ne  \eta'_0(z_0)$.  Now if
$y \in H^-(L_0) \cap \Z^2 $ then $f_0(y) <0$ so 
$f_n(y) <0$  for sufficiently large $n$ and hence
$y \in H^-(L_n) \cap \Z^2 $. It follows that
$\eta_0(y) =  \eta'_0(y)$. 

Since $\eta_0$ and  $\eta'_0$ agree on $H^-(L_n) \cap \Z^2$
but disagree at $z_0$ we conclude that $H^-(L_n) \cap \Z^2$
does not code $H^+(L_n) \cap \Z^2$ so $\rho_0$ is a 
nonexpansive ray.
\end{proof}

\begin{prop}\label{converse}
Suppose $Y$ is a \z2s and
$\E$ is the set of expansive rays for $Y$ (thought of as a subset of $S^1$). 
Suppose $C$ is a component of $\E$ and $\rho_1, \rho_2$ are the
endpoints of the open interval $C$. Then 
there exists a shift $(X,\sigma)$ with
endomorphism   $\phi$
and an isomorphism $\Psi\colon Y \to \U(\phi)$ from $Y$ to the spacetime
of $\phi$ 
(thought of as a $\Z^2$-system)
such that the lines $L_1 := \Span(\Psi(\rho_1))$ and 
$L_2 := \Span(\Psi(\rho_2))$ are
the two edges of the asymptotic light cone $\A(\phi)$ of $\phi$.
\end{prop}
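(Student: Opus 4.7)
I construct $\Psi$ in two stages: first a $\mathrm{GL}(2,\Z)$-change of coordinates on $Y$ sending $C$ to the component of $\E$ containing the positive horizontal ray, and then a higher-block recoding that presents the resulting subshift as the spacetime of an endomorphism whose asymptotic light cone has the prescribed edges. By Lemma~\ref{lemma : E open}, $C$ is open in $S^1$, and since the directions of primitive integer vectors are dense in $S^1$ I may choose a primitive $v_0 \in \Z^2$ with $\rho(v_0) \in C$. Choosing a primitive $w_0 \in \Z^2$ with $\det(v_0 \mid w_0) = 1$, let $A \in \mathrm{SL}(2,\Z)$ be the inverse of the matrix $[v_0 \mid w_0]$, so that $A v_0 = (1,0)$ and $A w_0 = (0,1)$. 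Define $\Psi_0 \colon Y \to Y'$ by $(\Psi_0 \eta)(z) = \eta(A^{-1} z)$. This is an isomorphism of $\Z^2$-subshifts under which the ray $\rho(v_0) \in C$ corresponds to the positive horizontal ray in $Y'$; in particular the horizontal line in $Y'$ is positively expansive.

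Next, to exhibit $Y'$ as a spacetime, let $r > 0$ be a positive-expansiveness constant for the horizontal line in $Y'$, and apply the standard higher-block recoding in which the alphabet is enlarged so that the value of a configuration at $(u, v)$ records the $Y'$-values on $\{u\} \times [v-r, v+r]$. In the resulting $\Z^2$-isomorphic subshift $Y''$, the row at height $0$ alone determines the row at height $1$, so letting $X$ be the projection of $Y''$ onto its $0$-th row and $\phi \colon X \to X$ the map sending row $0$ to row $1$, a standard compactness argument shows that $\phi$ is a sliding block code. Downward shift-invariance gives surjectivity, so $\phi \in \End(X, \sigma)$. A direct comparison, using that every $\phi$-preimage of a row in $X$ is realized in $Y''$ (again by shift-invariance) and compactness to realize bi-infinite chains of preimages, yields $Y'' = \U(\phi)$. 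Composing $\Psi_0$ with the recoding gives the required isomorphism $\Psi\colon Y \to \U(\phi)$.

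To complete the proof I identify the component of $\E(\U(\phi))$ containing $(1,0)$. Every ray in the open arc from $\rho^+(\phi)$ through $(1,0)$ to $\rho^-(\phi)$ is, by elementary trigonometry, either an upper ray of a line $u = m v$ with $m > \alpha^+(\phi)$ or a lower ray of a line $u = m v$ with $m < \alpha^-(\phi)$, hence expansive by Proposition~\ref{prop m>alpha}; the endpoints $\rho^\pm(\phi)$ are themselves nonexpansive by Theorem~\ref{thm nonexpansive}. Thus the component in question is exactly this arc. Since $\Psi$ is a $\Z^2$-isomorphism it preserves the expansive-ray structure, so $\Psi(C)$ must coincide with this arc, forcing $\{\Psi(\rho_1), \Psi(\rho_2)\} = \{\rho^+(\phi), \rho^-(\phi)\}$. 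Consequently $L_1$ and $L_2$ are the lines $u = \alpha^+(\phi) v$ and $u = \alpha^-(\phi) v$, which are the two edges of $\A(\phi)$. The principal technical obstacle is the identification $Y'' = \U(\phi)$, which requires carefully matching the admissible backward rows in $Y''$ with the $\phi$-preimage histories prescribed by the spacetime; the remaining identification of the boundary rays follows cleanly from the already-established Theorem~\ref{thm nonexpansive} and Proposition~\ref{prop m>alpha}.
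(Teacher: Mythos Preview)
Your proof is correct and follows essentially the same approach as the paper: a $\mathrm{GL}(2,\Z)$ change of coordinates to move a ray of $C$ to the positive horizontal direction, a vertical higher-block recoding (the paper cites Lemma~3.2 of \cite{BL} for this) to realize the result as a spacetime $\U(\phi)$, and then the identification of the boundary rays via Theorem~\ref{thm nonexpansive} and Proposition~\ref{prop m>alpha}. You are somewhat more explicit than the paper in justifying $Y'' = \U(\phi)$ and in spelling out why the open arc between $\rho^+(\phi)$ and $\rho^-(\phi)$ through $(1,0)$ is exactly the component of $\E(\U(\phi))$ containing $(1,0)$, but these are elaborations of the same argument rather than a different route.
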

Note that $\Psi$ is not a spacetime isomorphism, as the system $Y$ is not assumed to be a spacetime. 
\begin{proof}
We consider $C$ as an open interval $(\rho_1, \rho_2)$ in the
circle $S^1$ of rays in $\R^2$.
There is a \z2s  isomorphism
$\Psi_0\colon Y \to Y_0$, where $Y_0$ is a \z2s
with  $\la 1,0\ra \in \Psi_0(C)$.  
Thus the horizontal axis
with the usual orientation is a positively expansive
subspace for the \z2s $Y_0$.
We may recode $Y_0$ to $Y_1$ by an isomorphism
$\Psi_1\colon  Y_0 \to Y_1$ in such a way that the horizontal axis $H_0$  in
$\Z^2$ codes the positive half space $\{\la i, j\ra \in \Z^2\colon j > 0\}$ (this follows from Lemma 3.2 in~\cite{BL} where we recode $Y_0$ such that ``symbols'' in $Y_1$ are vertically stacked arrays of symbols from $Y_0$ of an appropriate height).  We let $\Psi\colon Y \to Y_1$ be the composition $\Psi_1 \circ \Psi_0$.

Let $X$ denote the set of colorings of $\Z$ obtained by restricting
elements  $\eta \in Y_1$ to $H_0$.  We could equally well describe
$X$ as the  colorings of $\Z$ obtained by restricting elements
of $Y$ to the horizontal row $H_{-1} := \{\la i, j\ra \in \Z^2\colon j = -1\}$
and define $\phi\colon  X \to X$ by $\phi(x) = x'$ if there is $\eta \in Y_1$ such
that $x = \eta|_{H_0}$ and $x' = \eta|_{H_{-1}}$.
Then clearly $\phi$
is an endomorphism
 and $Y_1$ is $\U(\phi)$,  the spacetime
of $\phi$.  

Note that the ray $\rho^+(\phi) := \{ \la \alpha^+ v,  v \ra \colon v \ge 0\}$
lies in the light cone $\A(\phi)$ of $\phi$ (and in the upper half space
of $\R^2$). 
If $m > \alpha^+(\phi)$ and $\rho_m$ is the ray
$\rho_m := \{ \la mv, v \ra \colon  v \ge 0\}$, then by
Proposition~\ref{prop m>alpha} $\rho_m$ is an expansive ray.
Since by Theorem~\ref{thm nonexpansive} $\rho^+(\phi)$ is not an
expansive ray, it follows that $\Psi(\rho_2) = \rho^+(\phi)$.

Letting $\rho^-(\phi) := \{ \la \alpha^- v, v \ra \colon v \le 0\}$, a similar proof
shows that $\Psi(\rho_1) = \rho^-(\phi)$. Hence the lines $L_1$ and $L_2$ form
the edges of the asymptotic light cone $\A(\phi)$.
\end{proof}

We are not able to show which lines can arise as the edges of the asymptotic light cone: 
\begin{question} 
Does there exist a subshift of finite type $X$ and an automorphism $\phi\in\Aut(X)$ such that some edge of the asymptotic light cone of $\phi$ has irrational slope?  If so, what set of angles is achievable? 
More generally, for a subshift of finite type $X$ or for a general shift $X$, what are all of the components of the expansive subspaces?
\end{question} 
Hochman~\cite{hochman} points out that, as there are only countably many shifts of finite type, this set must be countable (and, in particular, cannot contain all irrational slopes).
If $X$ is not required to be a subshift of finite type, then Hochman's results show that for the first 
question, the only constraint on the light cone for an automorphism (of an infinite subshift) comes from $-\infty< \alpha^-\leq\alpha^+<\infty$. 

\subsection{Asymptotic spread}
Let $\ell(n, \phi)$ be the minimal length of an interval $J \subset \Z$ which
contains $0$ and $\phi^n$-codes $\{0\}$ and let $\mathcal L(\phi^n)$
be the minimal length of an interval $J_0 \subset \Z$ which
is symmetric about $0$ and $\phi^n$-codes $\{0\}$.  
It is straightforward to see that both $\ell(n,\phi)$ and $\mathcal L(\phi^n)$ are 
subadditive sequences. 

\begin{defn}
Define the {\em asymptotic spread} $A(\phi)$ of $\phi \in \End(X)$ to be
\begin{equation}
\label{eq:asy-spread}
A(\phi) = \lim_{n \to \infty} \frac{\ell(n, \phi)}{n}.
\end{equation}
We say $\phi$ is {\em range distorted} if 
$A(\phi) = 0$.  
\end{defn}

Note that since the sequence $\ell(n, \phi)$ is subadditive, Fekete's Lemma implies 
that the limit in~\eqref{eq:asy-spread} exists.

The asymptotic spread is a measure of both the width of the asymptotic light cone, 
as well as how that cone deviates from the vertical.

\begin{rem}\label{rem: range distorted}
Since the function $\mathcal L(\phi^n)$ is a subadditive function of $n\ge 0,$
by Fekete's Lemma, the limit
\[
\rho(\phi) =  \lim_{n \to \infty} \frac{\mathcal L(\phi^n)}{n}
\]
exists. Clearly $\mathcal L(\phi^n) \le \ell(n, \phi) \le  2 \mathcal L(\phi^n) +1$
and so 
\[
\rho(\phi) \le A(\phi) \le   2 \rho(\phi).
\]
In particular, $\phi$ is range distorted if and only if 
\[
\lim_{n \to \infty} \frac{\mathcal L(\phi^n)}{n} = 0
\]
\end{rem}

\begin{prop}\label{prop: distort}
If $\phi  \in \Aut(X)$ and 
$\alpha^+(\phi) = \alpha^-(\phi) = \alpha^+(\phi^{-1}) = \alpha^-(\phi^{-1})$,  then
the line $u = \alpha^+(\phi)  v$ is the unique nonexpansive one-dimensional
subspace. In particular,  if $\phi, \phi^{-1} \in \Aut(X)$ are both range distorted,  then 
the vertical axis ($u = 0$) is the unique nonexpansive subspace
\end{prop}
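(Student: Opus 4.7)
The plan is to prove the first assertion and then deduce the ``in particular'' clause. Write $\alpha$ for the common value of $\alpha^+(\phi)$, $\alpha^-(\phi)$, $\alpha^+(\phi^{-1})$, $\alpha^-(\phi^{-1})$. First I would observe that $\alpha = 0$: part~\eqref{item:five} of Proposition~\ref{alpha prop} gives both $2\alpha = \alpha^+(\phi) + \alpha^+(\phi^{-1}) \ge 0$ and $2\alpha = \alpha^-(\phi) + \alpha^-(\phi^{-1}) \le 0$, forcing $\alpha = 0$. Consequently the candidate line $u = \alpha^+(\phi) v$ is simply the vertical axis $u = 0$, and the two clauses of the proposition refer to the same line.

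Next I would verify nonexpansiveness of the vertical axis and expansiveness of every other one-dimensional subspace. Nonexpansiveness is immediate from Theorem~\ref{thm nonexpansive}: with the indicated orientation, the line $u = \alpha^+(\phi) v = 0$ fails to be positively expansive, and any expansive (unoriented) subspace must be positively expansive in every orientation. The horizontal axis is expansive by the observation for automorphism spacetimes recorded right after Definition~\ref{def: expansive}. For a line $u = mv$ with $m \neq 0$ I would invoke Proposition~\ref{prop m>alpha}: under our hypotheses both thresholds $\max\{\alpha^+(\phi), -\alpha^-(\phi^{-1})\}$ and $\min\{\alpha^-(\phi), -\alpha^+(\phi^{-1})\}$ collapse to $0$, so any nonzero $m$ falls strictly on one side and makes the line expansive.

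To derive the ``in particular'' clause it suffices to show that range distortion of both $\phi$ and $\phi^{-1}$ forces all four limits to vanish, thereby placing us in the setting of the first assertion. By Lemma~\ref{interval-code}, any interval $J \ni 0$ that $\phi^n$-codes $\{0\}$ contains $\cI(-n,\phi) = [-W^+(n,\phi), -W^-(n,\phi)]$ as well as the origin; splitting on the signs of $W^\pm(n,\phi)$ gives $\ell(n, \phi) \ge W^+(n, \phi) + 1$ when $W^+(n, \phi) \ge 0$ and $\ell(n, \phi) \ge -W^-(n, \phi) + 1$ when $W^-(n, \phi) \le 0$. If $\alpha^+(\phi) > 0$ then $W^+(n, \phi) \ge 0$ eventually and $A(\phi) \ge \alpha^+(\phi) > 0$, contradicting range distortion of $\phi$; the case $\alpha^-(\phi) < 0$ is ruled out similarly. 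Combined with $\alpha^-(\phi) \le \alpha^+(\phi)$ from part~\eqref{item:three} of Proposition~\ref{alpha prop}, we conclude $\alpha^+(\phi) = \alpha^-(\phi) = 0$, and the analogous argument for $\phi^{-1}$ completes the reduction.

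The main obstacle is bookkeeping in the third paragraph: one must split on the a priori unconstrained signs of $W^\pm(n, \phi)$ to extract usable lower bounds on $\ell(n,\phi)$ from Lemma~\ref{interval-code}. Once this is handled, everything else reduces to direct invocations of Theorem~\ref{thm nonexpansive} and Proposition~\ref{prop m>alpha}, together with the sign constraints coming from part~\eqref{item:five} of Proposition~\ref{alpha prop}.
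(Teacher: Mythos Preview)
Your proof is correct and follows the same overall route as the paper---invoke Theorem~\ref{thm nonexpansive} for nonexpansiveness of the line $u=\alpha^+(\phi)v$ and Proposition~\ref{prop m>alpha} for expansiveness of every other line---but you supply two details that the paper leaves implicit.

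First, you observe that the hypothesis forces $\alpha=0$ via part~\eqref{item:five} of Proposition~\ref{alpha prop}. This is not merely cosmetic: Proposition~\ref{prop m>alpha} only yields expansiveness for $|m|>\max\{\alpha,-\alpha\}=|\alpha|$, so without $\alpha=0$ there would be an unhandled range $m\in(-|\alpha|,|\alpha|)\setminus\{\alpha\}$. The paper's one-line proof skips this, and your argument makes the application of Proposition~\ref{prop m>alpha} airtight. (It also reveals that the two clauses of the proposition are in fact the same statement.)

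Second, for the ``in particular'' clause the paper simply asserts that $\phi,\phi^{-1}$ range distorted is equivalent to all four $\alpha$'s vanishing, whereas you derive the needed direction directly from Lemma~\ref{interval-code} and the definition of $\ell(n,\phi)$. Your argument is slightly longer but has the advantage of not relying on Proposition~\ref{prop: spread}, which requires $(X,\sigma)$ to be of finite type. So your route is genuinely more general at this step.
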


\begin{proof} 
The first statement follows immediately from 
Theorem~\ref{thm nonexpansive} and Proposition~\ref{prop m>alpha}.  The second 
statement follows from the first, since $\phi$ and $\phi^{-1}$ are both range distorted
if and only if $\alpha^+(\phi) = \alpha^-(\phi) 
= \alpha^+(\phi^{-1}) = \alpha^-(\phi^{-1}) = 0$.
\end{proof} 

It was shown by M.~Hochman~\cite{hochman} that if $L$ is any $1$-dimensional subspace of $\R^2$, then there exists a subshift $X_L$ and an automorphism $\phi_L\in\Aut(X_L)$ such that $L$ is the unique nonexpansive subspace for the spacetime of $\phi_L$.  Moreover, the automorphisms $\phi_L$ in his examples always have infinite order (in particular, when $L$ is vertical, $\phi_L$ is range distorted and has infinite order).  However, the space $X_L$ he constructs
 lacks many natural properties one might assume about a subshift; for example, it is not a subshift of finite type 
 and it is not transitive.   He asks the following natural question: 
\begin{question}[{Hochman~\cite[Problem 1.2]{hochman}}]
Does every nonempty closed set of one-dimensional subspaces of $\R^2$ arise 
as the nonexpansive subspaces of a $\Z^2$-action that is transitive 
(or even minimal) and supports a global ergodic measure? 
\end{question} 
We do not answer this question, but recall it here as, in particular, we do not know whether a transitive subshift can have a range distorted automorphism of infinite order.  We mention further that, in the special case that $L$ is vertical, Hochman shows that his example $(X_L,\phi_L)$ is logarithmically distorted.

\begin{prop}\label{prop: spread}
If $\phi$ is an endomorphism of a subshift of finite type $(X,\sigma)$,  
then $A(\phi)$ is determined by the light cone of $\phi$ and is,
in fact, the  length of the smallest interval containing 
$0, \alpha^-(\phi)$ and $\alpha^+(\phi)$.
\end{prop}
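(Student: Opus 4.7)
The plan is to sandwich $\ell(n,\phi)$ between two quantities of the same asymptotic order and then pass to the limit. Throughout I abbreviate $W^\pm = W^\pm(n,\phi)$, so the $n$th level of the past light cone is $\cI(-n,\phi) = [-W^+,-W^-]$, and set
\[
I(n) := \max(0,-W^-) - \min(0,-W^+) + 1,
\]
the cardinality of the smallest integer interval containing both $\{0\}$ and $\cI(-n,\phi)$.

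First I would establish the lower bound $\ell(n,\phi) \ge I(n)$, which needs no extra hypothesis. By Lemma~\ref{interval-code}, any interval $[a,b]$ that $\phi^n$-codes $\{0\}$ contains $\cI(-n,\phi)$, so $a \le -W^+$ and $b \ge -W^-$; if we further require $a \le 0 \le b$, then $a \le \min(0,-W^+)$ and $b \ge \max(0,-W^-)$, yielding the bound.

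Next I would bring in the SFT hypothesis via Lemma~\ref{bound} to obtain the matching upper bound. Let $[a_n, b_n]$ be a minimum-width interval $\phi^n$-coding $\{0\}$, so $b_n - a_n \le W^+ - W^- + C(\phi)$. Combined with $a_n \le -W^+$ and $b_n \ge -W^-$, the two nonnegative slack terms $(-W^+)-a_n$ and $b_n - (-W^-)$ sum to at most $C(\phi)$, hence each lies in $[0,C(\phi)]$. The convex hull $[\min(0,a_n),\max(0,b_n)]$ therefore contains $0$, still $\phi^n$-codes $\{0\}$ (since it contains $[a_n,b_n]$), and has length at most $I(n) + 2C(\phi)$, giving $\ell(n,\phi) \le I(n) + 2C(\phi)$.

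Dividing both bounds by $n$ and letting $n\to\infty$, with $W^\pm(n)/n \to \alpha^\pm(\phi)$ and $C(\phi)$ fixed, one obtains
\[
A(\phi) = \max(0,\alpha^+(\phi)) - \min(0,\alpha^-(\phi)).
\]
Since $\alpha^-(\phi) \le \alpha^+(\phi)$ by Proposition~\ref{alpha prop}\eqref{item:three}, the smallest real interval containing $\{0,\alpha^-(\phi),\alpha^+(\phi)\}$ is $[\min(0,\alpha^-(\phi)),\max(0,\alpha^+(\phi))]$, whose length coincides with $A(\phi)$. The only delicate step is locating $[a_n,b_n]$ within a $C(\phi)$-neighborhood of $\cI(-n,\phi)$; the rest is bookkeeping, and the SFT hypothesis enters solely through Lemma~\ref{bound}.
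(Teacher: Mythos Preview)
Your argument is correct and follows essentially the same route as the paper: sandwich $\ell(n,\phi)$ between the length of the smallest interval containing $0$ and $\cI(-n,\phi)$ and that length plus a bounded constant, then divide by $n$ and pass to the limit. The only difference is that the paper invokes Lemma~\ref{lem: interval-code2} (that $\cI(-n,\phi)$ itself $\phi^n$-codes $\{0\}$ for large $n$) to get the upper bound, whereas you use Lemma~\ref{bound} to locate a minimal-width coding interval within $C(\phi)$ of $\cI(-n,\phi)$; your version has the mild advantage of not implicitly needing $|\cI(-n,\phi)|\to\infty$.
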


\begin{proof}
It follows from Proposition~\ref{lem: interval-code2} that 
if $\sigma$ is a subshift of finite type, 
then for all $x \in X$ and all sufficiently large $n>0$, 
the interval $[W^-(n), W^+(n)]$  is an interval which codes 
$\phi^n(x)[0]$ and
which is contained in any interval which contains $0$ and codes $\phi^n(x)[0]$.
It follows that if $J_n$ is the smallest interval containing 
$0, W^-(n)$ and $W^+(n)$, then
\[
A(\phi) = \lim_{n \to \infty} \frac{|J_n|}{n}.
\]
Hence $A(\phi)$
is the  length of the smallest interval containing 
$0, \alpha^-(\phi)$ and $\alpha^+(\phi)$.
\end{proof}

The following result is essentially the same as Proposition
5.3 of Tisseur's paper \cite{tisseur}, except that we consider an
arbitrary $\phi \in \Aut(X, \sigma)$ with $\sigma$ an arbitrary
shift while he considers a cellular automaton defined on the full
shift and preserving the uniform measure on that shift.  Our proof is quite
short and  makes no use of measure. It makes explicit the connection
 between the topological entropy of a shift and the topological
entropy of an automorphism of that shift.

\begin{thm}\label{thm: entropy}
If  $\phi \in \End(X)$,
then 
\[
h_{\topo}(\phi) \le A(\phi) h_{\topo}(\sigma),
\]
where  $A(\phi)$ is the asymptotic spread of
$\phi$. In particular, if $\phi$ is range distorted then 
$h_{\topo}(\phi) = 0$.  
\end{thm}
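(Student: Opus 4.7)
The plan is to bound $h_{\topo}(\phi)$ via the standard partition formulation, comparing the growth of the $\phi$-refined cylinder partition to the complexity function of $(X,\sigma)$. Fix $M\ge 0$ and let $\pi_{M,n}\colon X\to\Sigma^{(2M+1)n}$ be given by $\pi_{M,n}(x)=\bigl(\phi^j(x)[i]\bigr)_{-M\le i\le M,\,0\le j<n}$. Letting $P_M$ denote the clopen partition of $X$ by cylinders on coordinates $[-M,M]$, the atoms of $\bigvee_{j=0}^{n-1}\phi^{-j}P_{M}$ correspond exactly to the distinct values of $\pi_{M,n}$, so the standard computation of topological entropy for a continuous map on a subshift yields
$$h_{\topo}(\phi)=\lim_{M\to\infty}\limsup_{n\to\infty}\frac{1}{n}\log|\pi_{M,n}(X)|.$$

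Next, I would express $\pi_{M,n}(x)$ in terms of a finite window of $x$ using the coding intervals introduced earlier in the paper. For each $0\le j<n$, choose an interval $J_j\subset\Z$ of minimal length $\ell(j,\phi)$ that contains $0$ and $\phi^j$-codes $\{0\}$. Shift-invariance gives that $J_j+i$ $\phi^j$-codes $\{i\}$, so $\pi_{M,n}(x)$ is determined by the restriction of $x$ to the integer interval
$$K_{M,n}=\Bigl[\min_{0\le j<n}\min J_j-M,\ \max_{0\le j<n}\max J_j+M\Bigr].$$
Hence $|\pi_{M,n}(X)|\le P_X(|K_{M,n}|)$.

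The final step is to control the asymptotic length of $K_{M,n}$. By Fekete's Lemma applied to the subadditive sequence $\ell(\cdot,\phi)$, one has $\ell(j,\phi)/j\to A(\phi)=\inf_{j}\ell(j,\phi)/j$; in particular $\ell(j,\phi)\le (A(\phi)+\varepsilon)j$ for $j$ large. Choosing the $J_j$ coherently (using subadditivity to prevent their left and right extremes from both drifting at rate $A(\phi)$) yields $|K_{M,n}|\le A(\phi)\,n+\lo(n)+2M$, so
$$\frac{\log|\pi_{M,n}(X)|}{n}\;\le\; \frac{|K_{M,n}|}{n}\cdot\frac{\log P_X(|K_{M,n}|)}{|K_{M,n}|}\;\longrightarrow\; A(\phi)\cdot h_{\topo}(\sigma)$$
as $n\to\infty$ with $M$ fixed, and letting $M\to\infty$ preserves the bound, giving $h_{\topo}(\phi)\le A(\phi)\,h_{\topo}(\sigma)$. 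The range-distortion conclusion is then immediate: if $A(\phi)=0$, then $|K_{M,n}|/n\to 0$ and hence $h_{\topo}(\phi)\le 0$.

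The main obstacle I anticipate is the sharp control of $|K_{M,n}|$. A naive bound based only on each $|J_j|=\ell(j,\phi)$ yields $|K_{M,n}|\le 2A(\phi)\,n+\lo(n)+2M$, since in principle both $\min J_j$ and $\max J_j$ could grow at rate $A(\phi)$ as $j$ varies. To achieve the sharp rate $A(\phi)n$ one must arrange the intervals $J_j$ consistently, exploiting the subadditivity of $\ell$ and, in the SFT setting, the description of $A(\phi)$ from Proposition~\ref{prop: spread} in terms of $\alpha^{-}(\phi)$ and $\alpha^{+}(\phi)$.
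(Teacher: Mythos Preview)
Your approach is essentially the paper's: both bound the number of $m\times n$ rectangular patterns in the spacetime by $P_X$ evaluated at an interval of length $A(\phi)\,n+o(n)+m$, then pass to the limit. The obstacle you flag is real and the paper glosses over it too (its one-line justification invokes the characterization of $A(\phi)$ from Proposition~\ref{prop: spread}, which is stated only for SFTs); your suggested fix via coherent choices of the $J_j$ works in general---fix $N$ with $\ell(N,\phi)<(A(\phi)+\varepsilon)N$ and $J_N=[a,b]\ni0$, then for $j=qN+r$ with $0\le r<N$ take $J_j=[qa,qb]$ enlarged by a bounded interval coding the remainder $\phi^r$, so the $J_j$ are nested in $q$ and their union over $j<n$ has length at most $(A(\phi)+\varepsilon)n+O(1)$.
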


\begin{proof} 
Let $\U$ be the spacetime of $\phi.$ 
For $z\in \U$, let $R_{m,n} =\{(i,j)\in\Z^2\colon0\leq i<m, 0\leq j<n\}$ 
and let $z|_{R_{m,n}}$ denote 
the restriction of $z$ to $R_{m,n}$.  Recall that $P_\U$ denotes the two dimensional 
complexity function (see Definition~\ref{def: complexity}).  
Then 
\[
h_{\topo}(\phi)=\lim_{m\to\infty}\lim_{n\to\infty}\frac{1}{n}\log(P_{\mathcal{U}}(R_{m,n})). 
\]

Since $A(\phi)$ is the length of the smallest interval containing 
$0, \alpha^-(\phi)$ and $\alpha^+(\phi)$, for a fixed $m$ there is an interval 
$J$ in $\Z$  with length $A(\phi) n + \lo(n) +m$ that
$\phi^j$-codes the block $[0,m]$ for all $0 \le j \le n$.  In other words, 
the interval $J \times \{0\} \subset \U$ codes $R_{m,n}$.
Therefore, for any $\varepsilon >0$,  and $m$ and $n$ sufficiently large,
\[
P_{\mathcal{U}}(R_{m,n})\leq P_X(A(\phi) n +\lo(n) +m) \leq (\exp(h_\sigma+\varepsilon))^{A(\phi) n +m}.
\]  
Hence $\log(P_{\mathcal{U}}(R_{m,n}))\leq (A(\phi) n +m) (h_\sigma  +\varepsilon)$ and
\[
h_{\topo}(\phi)=\lim_{m\to\infty}\lim_{n\to\infty}\frac{\log(P_{\mathcal{U}}(R_{m,n}))}{n}\leq
\lim_{m\to\infty}\lim_{n\to\infty}\frac{(A(\phi) n +m) (h_\sigma +\varepsilon)}{n}
=A(\phi) (h_\sigma+\varepsilon).
\]
Since this holds for all $\varepsilon >0$,  the desired inequality follows.

By definition $\phi$ is range distorted if and only if $A(\phi) = 0$, and so
the last two assertions of the proposition are immediate.
\end{proof} 

\subsection{Distortion and inert automorphisms}

Recall that if $(\Sigma_A, \sigma)$ is a subshift of finite type, 
there is a dimension group representation 
$\Psi\colon \Aut(\Sigma_A) \to \Aut(D_A)$ mapping automorphisms
of the shift to automorphisms of its dimension group $D_A$
(see~\cite{LM},~\cite{W}, and~\cite{BK} for definitions).
A particularly important subgroup of $\Aut(\Sigma_A)$
is $\Inert(\Sigma_A)$,  defined to be the kernel
of $\Psi.$  An automorphism $\phi \in \Aut(\Sigma_A)$ is
called {\em inert} if $\Psi(\phi) = \id$. 

There is one special case when $\Psi$ can be thought of as
a homomorphism from $\Aut(\Sigma_A)$ to the group of positive reals
under multiplication.  This occurs when $\Sigma_A$ is an irreducible
subshift of finite type and $\det(I - At)$ is an irreducible polynomial.
In this setting, one can associate to each $\phi \in \Aut(\Sigma_A)$ 
an element $\lambda_\phi = \Psi_0(\phi)$ in $(0, \infty)$ such that 
$\Psi_0$ is a homomorphism and $\lambda_\phi = 1$ if and only
if $\phi$ is inert. 

To investigate the relationship between being inert and being 
distorted, we quote the following important result of Boyle and 
Krieger: 
\begin{thm}[Boyle and Krieger~{\cite[Theorem 2.17]{BK}}]
\label{thm: BK}
Suppose $(\Sigma_A, \sigma)$ is an irreducible subshift of finite type
and  $\det(I - At)$ is an irreducible polynomial. Then if $\phi \in \Aut(\Sigma_A)$ and 
$m$ is sufficiently large, $\sigma^m \phi$ is conjugate to a subshift
of finite type and 
$$h_{\topo}(\sigma^m \phi) = \log(\lambda_\phi) +
m h_{\topo}(\sigma).$$  
\end{thm}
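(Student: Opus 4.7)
The plan is to combine three ingredients: the algebraic structure of the dimension representation in the irreducible case, a conjugacy result placing $\sigma^m\phi$ among the SFTs for $m$ large, and the Perron--Frobenius formula for entropy of an SFT.

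First, I would make precise the homomorphism $\Psi_0$. When $p(t):=\det(I-At)$ is irreducible over $\Q$, the rational dimension group $D_A\otimes\Q$ is a one-dimensional vector space over the number field $K=\Q(\lambda_A)$, where $\lambda_A$ is the Perron eigenvalue of $A$. The shift automorphism $\Psi(\sigma)$ acts by multiplication by $\lambda_A$, and any $\Psi(\phi)$ commutes with $\Psi(\sigma)$ and hence, by Schur's lemma for this one-dimensional $K$-representation, must act by multiplication by some element $\lambda_\phi\in K^{\times}$. Preservation of the positive cone forces $\lambda_\phi\in(0,\infty)$, so this yields the homomorphism $\Psi_0\colon\Aut(\Sigma_A)\to(0,\infty)$, which is trivial precisely on $\Inert(\Sigma_A)$.

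Second, I would show that for $m$ sufficiently large, $\sigma^m\phi$ is conjugate to an SFT. The action of $\Psi(\sigma^m\phi)=\Psi(\sigma)^m\Psi(\phi)$ on $D_A$ is multiplication by $\lambda_A^m\lambda_\phi$. The idea is to produce a nonnegative integer matrix $B$ whose shift-equivalence class realizes this dimension-group datum, then invoke Williams' theorem to obtain a conjugacy between $(\Sigma_B,\sigma_B)$ and $(\Sigma_A,\sigma^m\phi)$. Concretely, any integer matrix representative of $\Psi(\sigma^m\phi)$ becomes eventually nonnegative after a suitable positive change of basis once $m$ is large, because the Perron eigenvalue $\lambda_A^m\lambda_\phi$ dominates the other Galois conjugates. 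The main obstacle is carrying this out rigorously: realizing an abstract shift-equivalence class by a nonnegative integer matrix and then by an explicit block code requires the full Williams/Krieger classification machinery, together with marker and state-splitting constructions, and this is essentially the substance of the Boyle--Krieger argument.

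Third, once the conjugacy is established the entropy calculation is immediate. For any irreducible SFT $(\Sigma_B,\sigma_B)$, the topological entropy equals $\log(\lambda_B)$, where $\lambda_B$ is the Perron eigenvalue of $B$. Since $(\Sigma_B,\sigma_B)$ is conjugate to $(\Sigma_A,\sigma^m\phi)$, the induced automorphism of the dimension group of $B$ is conjugate to $\Psi(\sigma^m\phi)$, which acts by multiplication by $\lambda_A^m\lambda_\phi$. Hence $\lambda_B=\lambda_A^m\lambda_\phi$, and
\[
h_{\topo}(\sigma^m\phi)=\log(\lambda_B)=m\log(\lambda_A)+\log(\lambda_\phi)=m\,h_{\topo}(\sigma)+\log(\lambda_\phi),
\]
which is the desired formula.
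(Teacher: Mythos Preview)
The paper does not prove this statement at all: it is explicitly quoted from Boyle and Krieger (``we quote the following important result of Boyle and Krieger''), with a citation to \cite[Theorem 2.17]{BK}, and is then used as a black box in the subsequent theorem. So there is no ``paper's own proof'' for you to be compared against.

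Your outline is a plausible high-level roadmap of the Boyle--Krieger argument, and you are candid that the second step (realizing $\sigma^m\phi$ as an SFT via shift equivalence/Williams-type machinery and marker constructions) is where essentially all the work lies. A couple of points would need tightening if this were meant as a genuine proof rather than a sketch. First, in your third step you invoke the Perron--Frobenius entropy formula for an \emph{irreducible} SFT $(\Sigma_B,\sigma_B)$, but you have not argued that the SFT you produce is irreducible; you would either need to arrange this or use the more general fact that entropy equals the log of the spectral radius. Second, the passage from ``$\Psi(\sigma^m\phi)$ acts by multiplication by $\lambda_A^m\lambda_\phi$ on the dimension group'' to ``the Perron eigenvalue of $B$ equals $\lambda_A^m\lambda_\phi$'' deserves a sentence: you need that the dimension representation of $\sigma_B$ on $D_B$ has spectral radius equal to the Perron eigenvalue of $B$, and that this spectral radius is preserved under the conjugacy of dimension data. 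These are standard facts, but they are exactly the kind of thing one should make explicit given that the heavy lifting in step two has been deferred to the literature anyway.
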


\begin{thm}
Suppose $(\Sigma_A, \sigma)$ is an irreducible subshift of finite type
such that $\det(I - At)$ is an irreducible polynomial, and let $\phi \in \Aut(\Sigma_A)$.
If $\phi$ and $\phi^{-1}$ are range distorted, then $\phi$ is inert.
\end{thm}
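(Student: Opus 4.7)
The plan is to apply Theorem~\ref{thm: entropy} to the automorphism $\sigma^m\phi$ for large $m$ and compare the resulting upper bound on $h_{\topo}(\sigma^m\phi)$ with the Boyle--Krieger equality in Theorem~\ref{thm: BK}, squeezing $\lambda_\phi$ between $1$ and itself.

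First I would unpack the hypothesis that $\phi$ is range distorted, i.e., $A(\phi)=0$. Since $(\Sigma_A,\sigma)$ is a subshift of finite type, Proposition~\ref{prop: spread} identifies $A(\phi)$ with the length of the smallest real interval containing $\{0,\alpha^-(\phi),\alpha^+(\phi)\}$. Combined with $\alpha^-(\phi)\le\alpha^+(\phi)$ from Proposition~\ref{alpha prop}(\ref{item:three}), vanishing of this length forces $\alpha^+(\phi)=\alpha^-(\phi)=0$. The identical argument applied to $\phi^{-1}$ yields $\alpha^{\pm}(\phi^{-1})=0$.

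Next, for any integer $m\ge 1$, Proposition~\ref{alpha prop}(\ref{item:one}) gives $\alpha^{\pm}(\sigma^m\phi)=\alpha^{\pm}(\phi)+m=m$, and so by Proposition~\ref{prop: spread} we have $A(\sigma^m\phi)=m$. Since $\sigma^m\phi\in\Aut(\Sigma_A)\subset\End(\Sigma_A)$, Theorem~\ref{thm: entropy} gives
\[
h_{\topo}(\sigma^m\phi)\le A(\sigma^m\phi)\,h_{\topo}(\sigma)=m\,h_{\topo}(\sigma).
\]
On the other hand, for sufficiently large $m$, Theorem~\ref{thm: BK} gives the equality
\[
h_{\topo}(\sigma^m\phi)=\log(\lambda_\phi)+m\,h_{\topo}(\sigma).
\]
Comparing the two yields $\log(\lambda_\phi)\le 0$, i.e.\ $\lambda_\phi\le 1$.

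The final step is to apply the same reasoning to $\phi^{-1}$, which by hypothesis is also range distorted, to obtain $\lambda_{\phi^{-1}}\le 1$. Because $\Psi_0$ is a homomorphism into the multiplicative group $(0,\infty)$ with $\Psi_0(\id)=1$, we have $\lambda_{\phi^{-1}}=\lambda_\phi^{-1}$, and hence $\lambda_\phi\ge 1$. Together these force $\lambda_\phi=1$, which is exactly the statement that $\phi$ is inert. The main obstacle is simply the bookkeeping: one must verify that the upper bound coming from the vanishing of $A(\phi)$ matches the ``shift part'' $m\,h_{\topo}(\sigma)$ of the Boyle--Krieger formula \emph{exactly}, so that the deviation $\log(\lambda_\phi)$ is pinned down. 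That exact matching is precisely what Proposition~\ref{alpha prop}(\ref{item:one}) together with Proposition~\ref{prop: spread} delivers under the SFT hypothesis.
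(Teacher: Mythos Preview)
Your proof is correct and follows essentially the same route as the paper's: compute $A(\sigma^m\phi)=m$ from $\alpha^\pm(\phi)=0$, feed this into Theorem~\ref{thm: entropy}, and compare with the Boyle--Krieger formula to bound $\log(\lambda_\phi)$. The only cosmetic difference is that the paper begins by replacing $\phi$ with $\phi^{-1}$ if necessary so that $\lambda_\phi\ge 1$, and then only proves the inequality $\lambda_\phi\le 1$ once, whereas you run the argument twice (for $\phi$ and for $\phi^{-1}$) and combine via $\lambda_{\phi^{-1}}=\lambda_\phi^{-1}$; the content is identical.
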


\begin{proof}
Let $\lambda_\phi = \Psi(\phi)$ and note that
by replacing $\phi$ with $\phi^{-1}$ if necessary,  we can 
assume that $\lambda_\phi \ge 1$.    Suppose $\phi$ is range distorted and so
$\alpha^+(\phi) = \alpha^-(\phi)=0$; we show that $\phi$ is inert.  
From parts~\eqref{item:one} and~\eqref{item:two} 
of Proposition~\ref{alpha prop},  we conclude that 
$\alpha^+(\sigma^k\phi) = \alpha^-(\sigma^k \phi)=k$.
By Proposition~\ref{prop: spread}, it follows that $A_{\sigma^k\phi} = |k|$.  
Hence by Theorem~\ref{thm: entropy}, we have 
$h_{\topo} (\sigma^k\phi) \le |k| h_{\topo}(\sigma)$.  Combining this with
the fact from Theorem~\ref{thm: BK} which says for large $k$ we have
$h_{\topo}(\sigma^k \phi) = \log(\lambda_\phi) +
k h_{\topo}(\sigma)$, we conclude that $\log(\lambda_\phi) \le 0$ or
$\lambda_\phi \le 1$.  Since we also have $\lambda_\phi \ge 1$, 
we conclude that $\lambda_\phi = 1$ and $\phi$ is inert.
\end{proof}

\end{document}